\newcolumntype{H}{>{\setbox0=\hbox\bgroup}c<{\egroup}@{}}
\newcommand{\R}{{\mathbb R}}
\renewcommand{\r}{{\mathbb R}}
\newcommand{\E}{{\mathbb E}}
\renewcommand{\P}{{\mathcal P}}
\renewcommand{\Pr}{{\mathbb P}}
\newcommand{\N}{{\mathbb N}}
\newcommand{\Z}{{\mathbb Z}}
\newcommand{\eps}{\varepsilon}
\newcommand{\B}{\mathcal B}
\newcommand{\X}{\mathcal X}
\newcommand{\dtv}{d_{\text{TV}}}
\DeclareMathOperator{\Var}{Var}
\DeclareMathOperator{\supp}{supp}
\DeclareMathOperator{\s}{span}
\newtheorem{theorem}{Theorem}[section]
\newtheorem{proposition}[theorem]{Proposition}
\newtheorem{lemma}[theorem]{Lemma}
\newtheorem{remark}[theorem]{Remark}
\newtheorem*{remark*}{Remark}
\newtheorem*{definition*}{Definition}
\numberwithin{equation}{section}
\newcounter{rcnt}[section]
\newcommand{\rem}[1]{}
\newcounter{desccount}
\newcommand{\descref}[1]{\hyperref[#1]{#1}}
\begin{document}

\begin{frontmatter}

\title{Interactive versus non-interactive locally differentially private estimation: Two elbows for the quadratic functional}
\runauthor{Butucea, C. and Rohde, A. and Steinberger, L.}

\runtitle{Private estimation of the quadratic functional}

\thankstext{T1}{Supported by the ANR Grant Labex Ecodec (ANR-11-LABEX-0047).}
\thankstext{T2}{Supported by the DFG Research Grant RO 3766/4-1.}
\thankstext{T3}{Supported by the Austrian Science Fund (FWF): I 5484-N.}
\thankstext{T4}{Supported by MFO Research in Pairs 2207q.}

\begin{aug}
\author{\fnms{Cristina} \snm{Butucea}\thanksref{T1,T4}}
\and
\author{\fnms{Angelika} \snm{Rohde}\thanksref{T2,T4}}
\and
\author{\fnms{Lukas} \snm{Steinberger}\thanksref{T2,T3,T4}}
\affiliation{CREST, ENSAE, IP Paris; University of Freiburg and University of Vienna}

\address{
	CREST, ENSAE, IP Paris\\
    5, avenue Henry Le Chatelier\\
    91120 Palaiseau \\
	E-mail: cristina.butucea@ensae.fr}

\address{
	Institute of Mathematics\\
	Albert-Ludwigs-Universit\"at Freiburg  \\
	Ernst-Zermelo-Stra{\ss}e 1\\
	79104 Freiburg im Breisgau\\
	E-Mail: angelika.rohde@stochastik.uni-freiburg.de}

\address{
	Department of Statistics and OR\\
	and Data Science @ Uni Vienna\\
	University of Vienna  \\
	Oskar-Morgenstern-Platz 1\\
	1090 Vienna\\
	E-mail: lukas.steinberger@univie.ac.at}
	
\end{aug}

\begin{abstract}
Local differential privacy has recently received increasing attention from the statistics community as a valuable tool to protect the privacy of individual data owners without the need of a trusted third party. Similar to the classical notion of randomized response, the idea is that data owners randomize their true information locally and only release the perturbed data. Many different protocols for such local perturbation procedures can be designed. In most estimation problems studied in the literature so far, however, no significant difference in terms of minimax risk between purely non-interactive protocols and protocols that allow for some amount of interaction between individual data providers could be observed. In this paper we show that for estimating the integrated square of a density, sequentially interactive procedures improve substantially over the best possible non-interactive procedure in terms of minimax rate of estimation. \\
In particular, in the non-interactive scenario we identify an elbow in the minimax rate at $s=\frac34$, whereas in the sequentially interactive scenario the elbow is at $s=\frac12$. This is markedly different from both, the case of direct observations, where the elbow is well known to be at $s=\frac14$, as well as from the case where Laplace noise is added to the original data, where an elbow at $s= \frac94$ is obtained.\\
We also provide adaptive estimators that achieve the optimal rate up to log-factors, we draw connections to non-parametric goodness-of-fit testing and estimation of more general integral functionals and conduct a series of numerical experiments.
The fact that a particular locally differentially private, but interactive, mechanism improves over the simple non-interactive one is also of great importance for practical implementations of local differential privacy.
\end{abstract}

\begin{keyword}[class=MSC]
\kwd[Primary ]{62G05}
\kwd[; secondary ]{62G10, 62C20}
\end{keyword}

\begin{keyword}
\kwd{local differential privacy}
\kwd{quadratic functional}
\kwd{minimax estimation}
\kwd{rate of convergence}
\kwd{non-parametric estimation}
\end{keyword}

\end{frontmatter}

\section{Introduction}

In the modern information-age an increasing amount of private and sensitive data about each and every one of us (such as medical information, smartphone user behavior, etc.) is perpetually being collected, electronically stored, processed and analyzed. This trend is opposed by an increasing desire for data privacy protection and stricter regulations as expressed, for instance, by the EU \emph{General Data Protection Regulation}\footnote{\url{https://gdpr-info.eu}} which is in effect since May 2018. On the technological side, a particularly fruitful approach to data privacy protection, that is considered insusceptible to privacy breaches, is `differential privacy', formally introduced by \citet{Dwork06}. However, the design and development of optimal statistical estimation procedures under differential privacy is still at its beginnings. A few first contributions in that direction are \citet{Duchi13a, Duchi13b, Duchi14, Wasserman10, Smith08, Smith11, Ye17, Rohde18, Butucea19, Cai19}.

In this paper we focus on the concept of \emph{$\alpha$-local differential privacy} (LDP) to protect the information of individual data providers. The general notion of $\alpha$-differential privacy, as introduced by \citet{Dwork06}, denotes a private data release mechanism that produces an output $Z$ based on original and confidential data $X_1,\dots, X_n$, such that the conditional distribution of $Z$ given $X=(X_1,\dots, X_n)$ satisfies
\begin{equation}\label{eq:DiffPriv}
\sup_{A} \sup_{x,x' : d_0(x,x')=1} \frac{Pr(Z\in A|X=x)}{Pr(Z\in A|X=x')} \quad\le\quad e^\alpha,
\end{equation}
where the first supremum runs over all measurable sets and $d_0(x,x'):=|\{i:x_i\ne x_i'\}|$ denotes the Hamming distance between $x$ and $x'$. Clearly, a smaller $\alpha$ implies a stronger privacy protection. Throughout this paper, we restrict to the case $\alpha\le1$, that is, the privacy protection is not allowed to deteriorate as the sample size increases. The `local' paradigm within differential privacy describes a situation where \emph{no} trusted third party is available that can do data collection and processing, but the original data $X_i$ have to be `sanitized' already on the data providers `local machine' \citep[cf.][]{Evfim03}. This is also closely related to the classical idea of randomized response \citep{Warner65}. In such local privacy protocols, even though the data providers trust nobody with their original data, some amount of interaction may be allowed between individuals. Here we consider two popular protocols for locally private estimation. First, we study the \emph{non-interactive protocol}, where individual $i$ generates a private view $Z_i$ of its original data $X_i$ independently of all the other individuals. Furthermore, we also consider the \emph{sequentially interactive protocol} where the $i$-th individual also has access to the previously sanitized data $Z_1,\dots, Z_{i-1}$ of other individuals in order to generate its own $Z_i$. Of course, sequentially interactive protocols are more flexible than non-interactive ones and have the potential to retain more information about the original unobserved data sample.

Our goal is to provide a complete picture of the minimax theory of estimating the integrated square $D(f) = \int f^2(x) dx$ of the density $f$ of the original i.i.d. data $X_1, \dots, X_n$, under local differential privacy. The quadratic functional plays an important role in statistics, for instance, in goodness-of-fit testing. Very recently, \citet{Lam20} have investigated goodness-of-fit testing based on the quadratic functional for the non-interactive protocol of differential privacy, where each individual uses the same channel to produce a sanitized observation $Z_i$. They succeeded in deriving the non-interactive minimax rate of testing over the particular Besov classes $B_s^{2\infty}$. Here, we study quadratic functional estimation over the scale of general Besov classes $B_s^{pq}$, $p\ge 2$, for both, non-interactive and sequentially interactive locally differentially private mechanisms.
Contrary to most existing results on locally differentially private estimation, we find that for estimating the quadratic functional, using a sequentially interactive protocol considerably improves over the non-interactive one, even in terms of minimax rate of convergence. This phenomenon, that sequentially interactive procedures improve substantially over non-interactive ones, can not be observed for many other private estimation problems such as density estimation \citep{Butucea19}, high-dimensional regression and mean estimation \citep{Duchi17} and estimation of general linear functionals of the true data generating distribution \citep{Rohde18}. However, \citet{Kasiviswanathan11} showed that so-called masked-parity functions can only be learned with interactive procedures but not with purely non-interactive ones. Furthermore, for certain pointer-chasing games that are usually not studied in the statistics literature, \citet{Joseph19, Joseph20} showed that going through multiple rounds of interaction in local differential privacy can reduce the sample complexity by a polynomial factor in the problem dimension.  Only very recently 
\citet{Acharya22} showed that 
sequential interaction can somewhat improve the rate of $s$-sparse mean estimation from $\frac{sd}{n\alpha^2}\log\frac{ed}{s}$ for non-interactive protocols to $\frac{sd}{n\alpha^2}$. Moreover, following up on the present paper, the separation between non-interactive and sequentially interactive privacy mechanisms was further examined in \citet{BerrettButucea} and \citet{Butucea21} for testing and non-linear functional estimation with discrete data. Sequentially interactive locally differentially private mechanisms for statistical estimation have also been proposed, e.g., in \citet[Section~3.2.2]{Duchi17} and \citet{Duchi18v3}, but without rigorously establishing the superiority over non-interactive procedures.


Our main contributions are the following:
\begin{itemize}

\item In the non-interactive case we construct an $\alpha$-differentially private data release mechanism and estimator for $\int f^2(x)dx$ based on U-statistics and sanitized empirical wavelet coefficients. Our procedure is related to the one of \citet{Butucea19} and is shown to achieve the minimax rate (up to log factors) within the class of $\alpha$-non-interactive differentially private procedures over Besov classes $B_s^{pq}$ with $p\ge2$, $q\ge1$. In this case, for $\alpha\in(0,1]$, the optimal convergence rate is given by
$$
(n \alpha^2)^{-\frac{4s}{4s+3}} \lor (n \alpha^2)^{-1/2}.
$$
Notice the elbow at $s=3/4$, where the nonparametric rate transitions into the rate of parametric $\alpha$-private estimation of $\sqrt{n\alpha^2}$. Also observe that the minimax rate of testing in \citet{Lam20} corresponds to the square-root of the nonparamteric part of our rate with respect to $n$, but is suboptimal with respect to $\alpha$ when it tends to zero.

\item The crucial point is that we improve the classical U-statistics approach by considering a two-step procedure that requires sequential (but still locally differentially private) interaction between data owners. The first part $X^{(1)} = (X_1,\dots, X_{n/2})$ of the sample is used to locally construct sanitized data $Z^{(1)} = (Z_1, \dots, Z_{n/2})$ and an estimate $\hat{f}^{(1)}$ of the density $f$, using the method of \citet{Butucea19}. Then, conditional on $Z^{(1)}$, we estimate the linear functional $f\mapsto \int \hat{f}^{(1)} f$ by the method of \citet{Rohde18} in a locally private way. Since $\hat{f}^{(1)}$ has to be provided to the owners of the second half of the data $X^{(2)} = (X_{n/2+1}, \dots, X_n)$ in order for them to generate sanitized data $Z^{(2)} = (Z_{n/2+1}, \dots, Z_n)$, the two-step procedure is inherently sequentially interactive. We establish its optimality within the class of all sequentially interactive procedures (up to log factors) by proving lower bounds on the corresponding minimax risk using a private version of the generalized Le~Cam method \citep[see also][Section~5]{Duchi18v1}. The achieved rate is given by
$$
(n\alpha^2)^{-\frac{4s}{4s+2}} \lor (n\alpha^2)^{-1/2}.
$$
Notice that the elbow is now at $s=1/2$. The fact that sequentially interactive methods may improve substantially over non-interactive ones is also an important lesson for implementations of local differential privacy.

\item We discuss two practically important applications for estimation of the quadratic functional: estimating more general integral functionals and goodness-of-fit testing.

\item We provide a non-interactive as well as a sequentially interactive $\alpha$-locally differentially private estimator of the quadratic functional, both of which do not depend on the smoothness $s$ of the density $f$, and we prove that they attain the respective minimax lower bounds, up to logarithmic factors in $n\alpha^2$.

\item Several numerical experiments are conducted which show that the sequential procedure can also be superior to the non-interactive one in smaller samples.
\end{itemize}

\subsection{Background on estimating quadratic functionals}

One particularly interesting non-linear functional is the quadratic functional. \cite{Bickel88} were the first to discover the so-called elbow phenomenon arising for estimating the integrated square of a density based on independent, identically distributed (i.i.d.)  observations: While a $\sqrt{n}$-efficient estimator exists for H\"older smoothness to the exponent $s>1/4$, the minimax rate of convergence over H\"older balls is $n^{-4s/(4s+1)}$ whenever $s\leq 1/4$ although the standard information bound is strictly positive and finite, see \cite{Ritov90}. Within the Gaussian sequence space model and minimax estimation of the squared $\ell_2$-norm  of the sequential parameter, \cite{Donoho90} found a corresponding phenomenon over $\ell_2$-ellipsoids. A fully data-driven procedure for quadratic functionals, based on model selection, with the functional class being some $\ell_p$ or Besov body for $0<p<2$, is developed in  \cite{Laurent00}. Estimation via quadratic rules of the quadratic functional over parameter spaces which are not quadratically convex is studied in \cite{Cai05}. It is shown that the near minimaxity of optimal quadratic rules typically
does not hold when the parameter space is not quadratically convex. The maximum
risk of quadratic procedures over any parameter space is established to be equal
to the maximum risk over the quadratic convex hull. It also follows from the results
that for Besov balls and $\ell_p$ balls with $0<p<2$, quadratic rules can be minimax rate optimal
only if the minimax quadratic risk is of order $n^{-1}$. The minimax quadratic risk also exhibits the well-known elbow phenomenon as mentioned
above for H\"older balls. More precisely, with $B_s^{pq}(M)$ denoting the centered ball of radius $M$ in the Besov class $B_s^{pq}$,
$$
\inf_{\hat D} \sup_{f\in B_s^{pq}(M)} \E_f\Big[ \big( \hat D - D(f)\big)^2\Big]
\asymp \left\{
\begin{array}{ll}
\frac{M^2}{n},& \frac 12 - \frac 1{2p}  \leq s'
\\
n^{2- \frac{p}{1+2ps'}}, & s' < \frac 12 - \frac 1{2p} ,
\end{array}
\right.
$$
where $0<p<2$ and
$$
s':=s- \frac 1p + \frac 12 >0 .
$$
In the same setting of sparse $\ell_p$ and Besov bodies, \cite{Cai06} construct an adaptive minimax-optimal estimator selecting among a
collection of penalized nonquadratic estimators. A detailed comparison to the results
of \cite{Laurent00} is given in their Section 3.3. \cite{Klemela06} studies
estimation of quadratic functionals for $\ell_p$ bodies with $2<p<\infty$. \cite{Butucea07} treats the problem of quadratic functional estimation on Sobolev classes in the convolution model, where the noise distribution is known and its characteristic function decays either polynomially or exponentially asymptotically. Particularly under polynomial decay at exponent $-\sigma$,  the elbow between parametric and nonparametric rate  is present again but shifted from Sobolev smoothness $1/4$ to $1/4+\sigma$. \cite{Collier17} realize minimax  estimation of linear and quadratic functionals over sparsity classes.

\subsection{Organization of the paper}

The paper is organized as follows. In Section~\ref{sec:Prelim} we discuss some preliminaries on differential privacy and Besov spaces and introduce the formal notation. Section~\ref{SEC:NONINTER} contains our main results on the non-interactive case, including minimax lower bounds and a minimax rate optimal non-interactive estimation procedure. In Section~\ref{SEC:SEQINTER} we present a sequentially-interactive estimation procedure that improves on the rate of the non-interactive method from Section~\ref{SEC:NONINTER}. We also establish its optimality within the larger class of sequentially interactive procedures by proving matching lower bounds (up to log factors). In Section~\ref{SEC:APPL} we discuss consequences and applications of our work for locally private estimation of more general integral functionals and for goodness-of-fit testing. In Section~\ref{sec:adaptation} we present adaptive estimators for both the non-interactive as well as the sequentially interactive case. Finally, in Section~\ref{sec:sim} we summarize the results of extensive numerical experiments to compare and evaluate the performance of our procedures in small samples. All the proofs are collected in the supplementary material \citep{Butucea22supp}.


\section{Preliminaries and notation}
\label{sec:Prelim}

We consider the situation where our $n$ data providers hold confidential data $X_1, \dots, X_n$ assumed to be i.i.d. on $[0,1]$ with common probability density function (pdf) $f:[0,1] \to \R_+$, $f\in L^2[0,1]$. We want to estimate the quadratic functional $D(f) = \int_0^1 f^2(x) dx$. However, we do not observe the original data $X_1,\dots, X_n$, but only the sanitized data $Z_1,\dots, Z_n$ on the measurable space $(\mathcal Z, \mathcal G):=(\prod_{i=1}^n\mathcal Z_i, \bigotimes_{i=1}^n\mathcal G_i)$. The conditional distribution of the observations $Z=(Z_1,\dots, Z_n)$ given the original sample $X=(X_1,\dots, X_n)$ is described by the \emph{channel distribution} $Q$. That is, $Q$ is a Markov probability kernel from $([0,1]^n, \mathcal B([0,1])^{\otimes n})$ to $(\mathcal Z, \mathcal G)$, where $\B([0,1])$ denotes the Borel sets of $[0,1]$ and $\otimes n$ denotes the $n$-fold product sigma field. For ease of notation we suppress the $n$-dependence of $Q$. Hence, the joint distribution of the observation vector $Z = (Z_1, \dots, Z_n)$ on $\prod_{i=1}^n\mathcal Z_i$ is given by $Q_f := Q \Pr_f^{n} $, i.e., the measure $A\mapsto \int_{[0,1]^n}Q(A|x_1,\dots, x_n)\prod_{i=1}^n f(x_i) dx$, $A\in\mathcal G$, where $\Pr_f(B) = \int_B f(x_1) dx_1$, $B\in\mathcal B([0,1])$. Finally, whenever $f$ and $Q$ are fixed and clear from the context, we write $(\Omega, \mathcal F, \Pr)$ for the underlying probability space on which random vectors like $X$ and $Z$ are defined, and we denote by $\E$ and $\Var$ the corresponding expectation and variance operators.

\subsection{Preliminaries on Besov spaces}
\label{sec:Besov}
For the necessary background on Besov spaces we mainly follow \cite{Hardle98} and  \citet[][Section~4.3]{Gine16}.
For any $h>0$, let $\Delta_h$ denote the $h$-shift difference operator, acting pointwise on any real-valued function $g$ on $[0,1]$ as
$$
\Delta_hg(t)=
\begin{cases}
g(t+h)-g(t)&\text{ if } 0\leq t\leq 1-h\\
0 & \text{ otherwise.}
\end{cases}
$$
For any $2\leq r\in\N$, $\Delta_h^{r}=\Delta_h\circ \Delta_h^{r-1}$ inductively defines its $r$-fold composition and if $\arrowvert g\arrowvert^p $ is Lebesgue integrable, $p\geq 1$,
$$
\omega_r(g,t,p)=\sup_{h\in(0,t]}\Arrowvert \Delta_h^rg\Arrowvert_{L_p}
$$
denotes the $r$th modulus of smoothness in the Lebesgue space $L_p$.
   For any $s>0$ and $1\le q\le \infty$, the Besov space $B_s^{pq}$ is given as
$$
B_s^{pq} =
\left\{ f\in L_p([0,1]) : \|f\|_{B_s^{pq}} <\infty\right\},\ \ \ \text{for } 1\le p<\infty,
$$
and with $\mathcal{C}[0,1]$ denoting the real-valued continuous functions on the unit interval,
$$
B_s^{\infty q} =\left\{ f\in \mathcal{C}([0,1]) : \|f\|_{B_s^{\infty q}} <\infty\right\}.
$$
Here,
\begin{align}\label{eq:BesovNorm}
\|f\|_{B_s^{pq}} = \begin{cases}
\|f\|_{L_p} + \left( \sum_{j=0}^\infty  \left[ 2^{js} \omega_r(f, 2^{-j},p)\right]^q\right)^{1/q}, \quad&\text{if } 1\le q<\infty,\\
\|f\|_{L_p} + \sup_{j\ge 0}\left[ 2^{js} \omega_r(f, 2^{-j},p)\right], &\text{if } q=\infty,
\end{cases}
\end{align}
defines the Besov norm, where $r=\lceil s\rceil_>$ is the smallest integer strictly larger than $s$. Note that by classical Besov space embeddings (cf. \citet[][Prop.~4.3.9]{Gine16}) for $p\leq 2$ and Jensen's inequality for $p>2$, the relation $s>(1/p-1/2)_+$ reveals that $B_s^{pq}\subset L_2$.

For the scaling function $\phi = \psi_{-10}=\mathds 1_{(0,1]}$ with  wavelet $\psi=\mathds 1_{(0,1/2]}-\mathds 1_{(1/2,1]}$, define $\psi_{jk}=2^{j/2}\psi(2^j\cdot -k)$ for $j\in\N\cup \{0\}, k\in\{0,1,\dots, 2^j-1\}$. The corresponding family
$$
\Big\{\psi_{-10},\psi_{jk}: j\in\N\cup \{0\}, k\in\{0,1,\dots, 2^j-1\}\Big\}
$$
defines the orthonormal Haar wavelet basis of the Hilbert space $L_2$. Throughout, we will describe the regularity of the Lebesgue density $f$ by its membership in an appropriate Besov ball. For $L>0$,
\begin{equation}
\P_s^{pq}(L) = \left\{f:[0,1]\rightarrow\R : f\ge0, \int_0^1 f(x)\,dx=1, \|f\|_{B_s^{pq}}\le L\right\}
\end{equation}
denotes the subset of Lebesgue probability densities on the unit interval within the centered Besov ball  of radius $L$.
For any $f\in\mathcal{P}_s^{pq}(L)$ with $s>(1/p-1/2)_+$, an application of Parseval's identity reveals the representation
\begin{align*}
\int_0^1f(x)^2dx=\sum_{j\geq -1}\sum_{k=0}^{(1\lor 2^j)-1}\langle f, \psi_{jk}\rangle^2=\sum_{j\geq -1}\sum_{k=0}^{(1\lor 2^j)-1}\beta_{jk}^2
\end{align*}
with the wavelet coefficients $\beta_{jk}=\beta_{jk}(f)=\langle f, \psi_{jk}\rangle_{L_2}$. Note that for general parameter constellations $p,q,s$, the Besov spaces cannot be defined equivalently in terms of Haar wavelet coefficient norms. Nevertheless, the sequences $(\beta_{jk})_{k=0,\dots, 2^{j}-1}$ of above introduced coefficients satisfy the following relation with respect to  the modulus of smoothness. For any $ 1\le p\le \infty$,
there exists some constant $C_p>0$, such that for any $f\in B_s^{pq}$ with $s<1$,
\begin{equation}\label{eq:HaarNormBound}
2^{j(1/2-1/p)}\Arrowvert \beta_{j\cdot}(f)\big\Arrowvert_{\ell_p} \le C_p\,  \omega_1(f, 2^{-j},p)
\end{equation}
for $ j\ge0$, see \cite{Devore92}.

\subsection{Interactive and non-interactive differential privacy}

Recall that for $\alpha\in(0,1]$, a channel distribution $Q$ is called \emph{$\alpha$-differentially private}, if
\begin{equation}\label{eq:alphaPriv}
\sup_{A\in\bigotimes_{i=1}^n\mathcal G_i}\sup_{\substack{x,x'\in[0,1]^n\\ d_0(x,x')=1}} \frac{Q(A|x)}{Q(A|x')} \quad\le \quad e^\alpha,
\end{equation}
where $d_0(x,x') := |\{i:x_i\ne x_i'\}|$ is the Hamming distance between $x$ and $x'$.
Note that for this definition to make sense, the probability measures $Q(\cdot|x)$, for different $x\in[0,1]^n$, have to be equivalent and we interpret $\frac{0}{0}$ as equal to $1$.

Next, we introduce two specific classes of locally differentially private channels. A channel distribution $Q: (\bigotimes_{i=1}^n\mathcal G_i)\times [0,1]^n \to [0,1]$ is said to be $\alpha$-\emph{sequentially interactive} (or provides $\alpha$-sequentially interactive differential privacy) if the following two conditions are satisfied. First, we have for all $A\in \bigotimes_{i=1}^n\mathcal G_i$ and $x_1,\dots, x_n\in[0,1]$,
\begin{align}\label{eq:Seq}
&Q\left( A\Big|x_1,\dots, x_n\right)\notag \\
&=
\int_{\mathcal Z_1}\dots \int_{\mathcal Z_n} Q_n(A_{z_{1:n-1}}|x_n,z_{1:n-1})
Q_{n-1}(dz_{n-1} |x_{n-1}, z_{1:n-2})\dots Q_1(dz_1|x_1),
\end{align}
where, for each $i=1,\dots, n$, $Q_i$ is a channel from $[0,1]\times \bigotimes_{j=1}^{i-1}\mathcal G_j$ to $\mathcal Z_i$. Here, $z_{1:n} = (z_1,\dots, z_n)^T$ and $A_{z_{1:n-1}} = \{z\in\mathcal Z_n : (z_1,\dots, z_{n-1},z)^T\in A\}$ is the $z_{1:n-1}$-section of $A$. Second, we require that the conditional distributions $Q_i$ satisfy
\begin{equation}\label{eq:alphaSeq}
\sup_{A\in\bigotimes_{i=1}^n\mathcal G_i}\sup_{x_i,x_i',z_1,\dots, z_{i-1}} \frac{Q_i(A|x_i, z_1,\dots, z_{i-1})}{Q_i(A|x_i', z_1,\dots, z_{i-1})} \quad\le \quad e^\alpha \quad\quad\forall i=1,\dots, n.
\end{equation}
By the usual approximation of integrands by simple functions, it is easy to see that \eqref{eq:Seq} and \eqref{eq:alphaSeq} imply \eqref{eq:alphaPriv}.
This notion coincides with the definition of sequentially interactive channels in \citet{Duchi17} and \citet{Rohde18}.
We note that \eqref{eq:alphaSeq} only makes sense if for all $x_i, x_i', z_1,\dots, z_{i-1}$, the probability measure $Q_i(\cdot|x_i,z_{1:i-1})$ is absolutely continuous with respect to $Q_i(\cdot|x_i',z_{1:i-1})$.
Here, the idea is that individual $i$ can only use $X_i$ and previous $Z_j$, $j<i$, in its local privacy mechanism, thus leading to the sequential structure in the above definition. In the rest of the paper we only consider $\alpha$-sequentially interactive channels, which we sometimes simply call $\alpha$-private channels.

An important subclass of sequentially interactive channels are the so called \emph{non-interactive} channels $Q$ that are of product form
\begin{equation}\label{eq:non-Inter}
Q\left( A_1\times\dots\times A_n\Big|x_1,\dots, x_n\right) = \prod_{i=1}^n Q_i(A_i|x_i), \quad\quad \forall A_i\in\mathcal G_i, x_i\in[0,1].
\end{equation}
Clearly, a non-interactive channel $Q$ satisfies \eqref{eq:alphaPriv} if, and only if, for all  $i=1,\dots, n$,
\begin{equation}\label{eq:MarginPriv}
\sup_{A\in\mathcal G_i}\sup_{x,x'\in[0,1]}\frac{Q_i(A|x)}{Q_i(A|x')}\quad \le\quad e^\alpha.
\end{equation}
In that case it is also called \emph{$\alpha$-non-interactive}. Both, $\alpha$-non-interactive and $\alpha$-sequentially interactive channels satisfy the $\alpha$-local differential privacy constraint as defined in the introduction. Of course, every $\alpha$-non-interactive channel is also $\alpha$-sequentially interactive.

\subsection{Locally, differentially private minimax risk}

For a fixed channel distribution $Q$ from $([0,1]^n, \mathcal B([0,1]^n))$ to $(\mathcal Z, \mathcal G)$, the minimax risk of the above estimation problem is given by
\begin{align}\label{eq:Qminimax}
\mathcal M_n(Q,\P_s^{pq}) \quad=\quad \inf_{\hat{D}_n} \sup_{f\in \P_s^{pq}} \E_{Q \Pr_f^{n}}\left[ (\hat{D}_n - D(f))^2 \right],
\end{align}
where the infimum runs over all estimators $\hat{D}_n : \mathcal Z\to\R$. Next, define the set of $\alpha$-non-interactive  channels
\begin{equation}\label{eq:NIset}
\mathcal Q_\alpha^{(NI)} := \bigcup_{(\mathcal Z, \mathcal G)}\left\{ Q : Q \text{ is $\alpha$-non-interactive from $[0,1]^n$ to $\mathcal Z$} \right\},
\end{equation}
where the union runs over all $n$-fold product spaces,
and the set of $\alpha$-sequentially interactive channels
\begin{equation}\label{eq:SIset}
\mathcal Q_\alpha^{(SI)} := \bigcup_{(\mathcal Z, \mathcal G)}\left\{ Q : Q \text{ is $\alpha$-sequentially interactive from $[0,1]^n$ to $\mathcal Z$} \right\}.
\end{equation}
Clearly, $\mathcal Q_\alpha^{(NI)}\subseteq \mathcal Q_\alpha^{(SI)}$.
Therefore, we distinguish the $\alpha$-private minimax risks
\begin{align}\label{eq:minimaxPrivNI}
\mathcal M_{n,\alpha}^{(NI)}(\P_s^{pq}) \quad=\quad \inf_{Q\in\mathcal Q_\alpha^{(NI)}} \mathcal M_n(Q,\P_s^{pq})
\end{align}
and
\begin{align}\label{eq:minimaxPrivSI}
\mathcal M_{n,\alpha}^{(SI)}(\P_s^{pq}) \quad=\quad \inf_{Q\in\mathcal Q_\alpha^{(SI)}} \mathcal M_n(Q,\P_s^{pq}).
\end{align}
Note that the above infima include all possible product spaces $(\mathcal Z, \mathcal G)$.

In the sequel we will derive upper and lower bounds on both of these minimax risks (for appropriate subsets of $\P_s^{pq}$). In each case, we will also present an explicit construction of a locally private estimation procedure that attains the lower bound (up to logarithmic factors).

\subsection{Further notation}

We write $a\lor b = \max(a,b)$ and $a\land b = \min(a,b)$. Throughout, $C$, $C_0$, $c$ are positive finite constants that do neither depend on sample size $n$ nor on an unknown parameter $f$, but might depend on $s, p, q, L$ or other constants used to describe the parameter space for $f$, and might change from one occurrence to another. We sometimes write $a \lesssim b$ to mean $a\le C\cdot b$, for a finite constant $C>0$ that does not depend on $n$, $f$ and $\alpha$. Finally, $a \asymp b$ means that $a\lesssim b$ and $b\lesssim a$.


\section{Non-interactive privacy protocols}
\label{SEC:NONINTER}

In this section we present an $\alpha$-non-interactive privacy mechanism and subsequent estimator for the quadratic functional $D(f)=\int_0^1 f^2(x)dx$ and establish its minimax optimality within the class $\mathcal Q_\alpha^{(NI)}$ of all $\alpha$-non-interactive procedures.

\subsection{Upper bounds}

We first propose a non-interactive privacy mechanism, related to the one of \citet{Butucea19}, that is based on adding Laplace noise to empirical wavelet coefficients. The subsequent estimator is a standard U-statistic of order 2.

Let us define the following privacy mechanism using the Haar basis generated by $(\phi, \psi)$, with $\phi(x) = \mathds 1_{(0,1]}(x)$ and $\psi(x) = \mathds 1_{(0,\frac 12]}(x) - \mathds 1_{(\frac 12, 1]}(x)$, $x\in\R$. Fix $\alpha>0$, $a>1$ and $J\in\N$. Given its original data $X_i$, individual $i$ generates a random array $Z_i$ with $(j,k)$-th component
\begin{equation}\label{eq:nonInterQ}
Z_{ijk} =
\psi_{jk}(X_i ) + \sigma_j \cdot \frac{\sigma}{\alpha}\cdot W_{ijk}, \quad j=-1,...,J-1, \, k=0,...,\lceil2^j-1\rceil,
\end{equation}
where $\sigma_{-1} = 1$,  $\sigma_j = (1\vee j)^{a} 2^{j/2}$ for $j\geq 0$, and $\sigma = 4 + 2\sum_{j=1}^{\infty} \frac 1{j^{a}}  $. Moreover, $W_{ijk}$ are i.i.d. Laplace distributed with density $f^W(x) = \frac 12 \exp(-|x|)$. Note that $W_{ijk}$ are all centered, with variance 2. We write $Q^{(NI)}$ for the conditional distribution (Markov kernel, channel distribution) of $(Z_1,\dots, Z_n)$ given $(X_1, \dots, X_n)$. In particular, the channel $Q^{(NI)}$ is non-interactive. The following result establishes that $Q^{(NI)}\in\mathcal Q_\alpha^{(NI)}$. Its proof is deferred to Section~A.1 in the supplement \citep{Butucea22supp}.

\begin{proposition}\label{prop:NIchannel}
For any $J\in\N$ and $a > 0$, $\alpha>0$, the privacy mechanism $Q^{(NI)}$ defined in \eqref{eq:nonInterQ} is $\alpha$-non-interactive.
\end{proposition}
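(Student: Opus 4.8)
The plan is to verify the privacy condition \eqref{eq:MarginPriv} directly for each marginal channel $Q_i$, since $Q^{(NI)}$ is non-interactive and hence it suffices (by the equivalence stated just after \eqref{eq:non-Inter}) to bound the likelihood ratio of $Q_i(\cdot|x)$ versus $Q_i(\cdot|x')$ uniformly over $x,x'\in[0,1]$ and over measurable sets. Because $Z_i=(Z_{ijk})_{j,k}$ has coordinates that, conditionally on $X_i$, are mutually independent (the $W_{ijk}$ are i.i.d.), the conditional density of $Z_i$ given $X_i=x$ factorizes over $(j,k)$, and each factor is a shifted Laplace density with location $\psi_{jk}(x)$ and scale $\sigma_j\sigma/\alpha$. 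Thus the log-likelihood ratio between $x$ and $x'$ is a sum over $(j,k)$ of terms of the form $\frac{\alpha}{\sigma_j\sigma}\bigl(|z_{ijk}-\psi_{jk}(x')|-|z_{ijk}-\psi_{jk}(x)|\bigr)$, and by the triangle inequality each such term is bounded in absolute value by $\frac{\alpha}{\sigma_j\sigma}|\psi_{jk}(x)-\psi_{jk}(x')|\le \frac{\alpha}{\sigma_j\sigma}\,2\|\psi_{jk}\|_\infty$.

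Next I would collect the normalizing constants. For the Haar wavelet $\psi_{jk}=2^{j/2}\psi(2^j\cdot-k)$ with $\|\psi\|_\infty=\frac12$ we have $\|\psi_{jk}\|_\infty=2^{j/2-1}$ for $j\ge0$, and for the level $j=-1$ term $\psi_{-1,0}=\phi=\mathds 1_{[0,1)}$ we have $\|\psi_{-1,0}\|_\infty=1$. Moreover, for each fixed $x\in[0,1]$ and each level $j\ge0$ only one value of $k$ gives $\psi_{jk}(x)\neq0$ (the Haar wavelets at a given level have disjoint supports), so in the sum over $(j,k)$ at most two values of $k$ contribute per level $j$ (namely the at most one active index for $x$ and the at most one for $x'$). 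Hence
$$
\Bigl|\log\frac{dQ_i(\cdot|x)}{dQ_i(\cdot|x')}(z_i)\Bigr|
\;\le\; \frac{\alpha}{\sigma}\Bigl(\frac{2\cdot1}{\sigma_{-1}} + \sum_{j=0}^{J-1}\frac{2\cdot 2\cdot 2^{j/2-1}}{\sigma_j}\Bigr)
\;=\;\frac{\alpha}{\sigma}\Bigl(2 + 2\sum_{j=0}^{J-1}\frac{2^{j/2}}{(1\vee j)^a 2^{j/2}}\Bigr),
$$
using $\sigma_{-1}=1$ and $\sigma_j=(1\vee j)^a2^{j/2}$. The $2^{j/2}$ factors cancel, leaving $\frac{\alpha}{\sigma}\bigl(2 + 2\sum_{j=0}^{J-1}(1\vee j)^{-a}\bigr)\le \frac{\alpha}{\sigma}\bigl(2 + 2 + 2\sum_{j=1}^{\infty}j^{-a}\bigr) = \frac{\alpha}{\sigma}\cdot\sigma = \alpha$, where the "$2$" from the $j=0$ term is absorbed into the "$4$" in the definition $\sigma=4+2\sum_{j\ge1}j^{-a}$. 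This gives a pointwise (in $z_i$) bound on the likelihood ratio by $e^{\alpha}$, and integrating over any $A\in\mathcal G_i$ yields $Q_i(A|x)\le e^{\alpha}Q_i(A|x')$, i.e. \eqref{eq:MarginPriv}, hence \eqref{eq:alphaPriv} by the non-interactive characterization. Note the argument works for all $a>0$ provided the series $\sum j^{-a}$ converges, which is exactly why the statement is phrased with the summability of $\sigma$ implicit; if one wants it literally for all $a>0$ one should interpret $\sigma$ as $+\infty$ when $a\le1$, in which case the claim is vacuous, so effectively $a>1$.

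The only real subtlety — and the one place to be careful — is the disjoint-support bookkeeping at each resolution level: one must check that for any two points $x,x'$ the number of indices $(j,k)$ at which $\psi_{jk}(x)$ and $\psi_{jk}(x')$ are not simultaneously zero is at most $2$ per level $j\ge0$ (plus the single $j=-1$ term), so that the worst-case sum is the one written above rather than something growing in $2^j$. Everything else is the triangle inequality for the Laplace location family and the elementary fact that a uniform pointwise bound on a Radon–Nikodym derivative transfers to the set-wise inequality defining differential privacy. I would also remark that the bound is uniform in $J$, so the mechanism is $\alpha$-non-interactive for every choice of the truncation level $J$, as claimed.
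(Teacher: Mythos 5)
Your proposal is correct and follows essentially the same route as the paper: factorize the conditional Laplace density over $(j,k)$, bound the log-likelihood ratio by the triangle inequality, use the disjoint supports of the Haar functions at each level so that at most two indices $k$ contribute per level, and observe that the $2^{j/2}$ factors cancel against $\sigma_j$, leaving a sum bounded by $\sigma$. Your side remark about $a>1$ being needed for $\sigma<\infty$ is also consistent with the paper, which uses $a>1$ elsewhere.
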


We shall use the notation
\begin{equation}\label{estcoeff}
\hat \beta_{jk} = \frac 1n \sum_{i=1}^n Z_{ijk}, \quad j = - 1,...,J-1, \, k=0,...,(1\lor 2^j)-1,
\end{equation}
with $\hat \beta_{-1,0}$ also called $\hat \alpha_{00}$.
Since $W_{ijk}$ are i.i.d. centered, with variance 2, we get for $j=-1,...,J-1$, $k=0,...,(1\lor 2^j)-1$:
$$
\E (\hat \beta_{jk}) = \beta_{jk} \quad \text{and }
\Var(\hat \beta_{jk}) = \frac 1n \left( \Var(\psi_{jk}(X_1)) + 2 \sigma_j^2 \cdot  \frac{\sigma^2}{\alpha^2}\right) .
$$
Finally, let us define the private estimator $\hat{D}_n$ of $D=D(f)$, by
\begin{equation}\label{hatD}
\hat{D}_n =  \frac 1{n(n-1)} \underset{ i\not= h}{\sum^n} \sum_{j=-1}^{J-1} \sum_{k=0}^{(1\lor 2^j)-1} Z_{ijk} \cdot Z_{hjk} .
\end{equation}
We are now in the position to formulate our first main result on the risk of $\hat{D}_n$. Its proof is deferred to Section~A.2 of the supplementary material \citep{Butucea22supp}.

\begin{theorem} \label{noninteractiveUB}
For finite constants $L, M_2, M_3 >0$, $1\le p,q\le\infty$ and $s> (\frac 1p - \frac12)_+$, consider $\bar{\P}_s^{pq}(L,M_2,M_3) = \P_s^{pq}(L)\cap\{f\in L_3([0,1]):\|f\|_{L_2}\le M_2, \|f\|_{L_3}\le M_3\}$. Put $s' = s - (\frac 1p - \frac 12 )_+$. Then, for every $n\in\N$ and $\alpha\in(0, 1]$, with $n\alpha^2>1$, the estimator $\hat{D}_n$ with $J = J_n$ given by
$$
2^{J_n} = \left\{ \begin{array}{ll}
\left( \frac {n \alpha^2}{ (\log (n\alpha^2))^{4a+1} } \right)^{\frac 13},& s' > \frac 34,\\
(n\alpha^2)^{\frac{2}{4s'+3}}, & 0 < s' \leq \frac 34,
\end{array} \right.
$$
verifies
$$
\sup_{f \in \bar{\P}_s^{pq}(L, M_2,M_3)} \E_{Q_f^{(NI)}}\left[ \left|\hat{D}_n - D(f) \right|^2 \right]
\;\lesssim \; \mathfrak r_n^{(NI)}(\alpha, a, s'),
$$
where
\begin{align*}
\mathfrak r_n^{(NI)}(\alpha, a, s') =
\begin{cases}
\frac{1}{n \alpha^2}, & s'> \frac 34, \\
(\log (n\alpha^2))^{4a+1} (n\alpha^2)^{-\frac{8s'}{4s'+3}}, & 0 < s' \leq \frac 34.
\end{cases}
\end{align*}
\end{theorem}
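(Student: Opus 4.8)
The plan is to use the bias--variance decomposition
\[
\E_{Q_f^{(NI)}}\big[(\hat D_n-D(f))^2\big]=\big(D(f)-\E_{Q_f^{(NI)}}\hat D_n\big)^2+\Var_{Q_f^{(NI)}}(\hat D_n),
\]
bound the two pieces uniformly over $\bar{\P}_s^{pq}(L,M_2,M_3)$, and optimise the cut-off $J$ at the very end, treating the regimes $s'>\tfrac34$ and $0<s'\le\tfrac34$ separately.

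For the bias, note that since $i\ne h$ the noise arrays attached to different individuals are independent, so $\E[Z_{ijk}Z_{hjk}]=\E[\psi_{jk}(X_i)]\E[\psi_{jk}(X_h)]=\beta_{jk}^2$, whence $\E_{Q_f^{(NI)}}\hat D_n=\sum_{j=-1}^{J-1}\sum_k\beta_{jk}^2$ and, by Parseval's identity, $|D(f)-\E_{Q_f^{(NI)}}\hat D_n|=\sum_{j\ge J}\|\beta_{j\cdot}(f)\|_{\ell_2}^2$. This tail I would control with \eqref{eq:HaarNormBound} combined with the bound $\omega_1(f,2^{-j},p)\lesssim 2^{-js}$ coming from membership in $B_s^{pq}$ and Jensen's inequality resp. the Besov embedding (to pass from the $\ell_p$- to the $\ell_2$-norm of the coefficient block), which gives $\|\beta_{j\cdot}(f)\|_{\ell_2}^2\lesssim 2^{-2js'}$ and therefore $(D(f)-\E_{Q_f^{(NI)}}\hat D_n)^2\lesssim 2^{-4Js'}$.

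The variance is the crux. Writing $\hat D_n=\binom n2^{-1}\sum_{i<h}\langle Z_i,Z_h\rangle$ as a U-statistic of order $2$ in the i.i.d. arrays $Z_1,\dots,Z_n$, the Hoeffding decomposition gives $\Var(\hat D_n)\lesssim n^{-1}\zeta_1+n^{-2}\zeta_2$ with $\zeta_1=\Var(\langle Z_1,\beta\rangle)$ (because $\E[\langle Z_1,Z_2\rangle\mid Z_1]=\langle Z_1,\beta\rangle$) and $\zeta_2=\Var(\langle Z_1,Z_2\rangle)\le\E[\langle Z_1,Z_2\rangle^2]$. For $\zeta_1$ I would use $\langle Z_1,\beta\rangle=f_J(X_1)+\tfrac{\sigma}{\alpha}\sum_{j,k}\beta_{jk}\sigma_j W_{1jk}$, where $f_J=\sum_{j<J,k}\beta_{jk}\psi_{jk}$ is the Haar projection of $f$, so that $\zeta_1=\Var(f_J(X_1))+\tfrac{2\sigma^2}{\alpha^2}\sum_{j<J,k}\beta_{jk}^2\sigma_j^2$; here $\Var(f_J(X_1))\le\int f_J^2 f\le\|f\|_{L_3}^3$ because the Haar projection is a conditional expectation and hence an $L_3$-contraction --- this is precisely the role of the additional $L_2,L_3$ constraints in $\bar{\P}_s^{pq}$ --- while $\tfrac1{\alpha^2}\sum_{j<J,k}\beta_{jk}^2\sigma_j^2\lesssim\tfrac1{\alpha^2}\sum_{j<J}(1\vee j)^{2a}2^{j(1-2s')}$ by \eqref{eq:HaarNormBound}. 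For $\zeta_2$, independence of $Z_1,Z_2$ gives $\E[\langle Z_1,Z_2\rangle^2]=\|\Sigma\|_F^2$ with $\Sigma=\E[Z_1Z_1^\top]=G+\tfrac{2\sigma^2}{\alpha^2}\diag(\sigma_j^2)$ and $G_{(jk),(j'k')}=\E[\psi_{jk}(X_1)\psi_{j'k'}(X_1)]$; expanding the square produces $\|G\|_F^2$, a mixed term $\lesssim\tfrac1{\alpha^2}\sum_{j<J}\sigma_j^2\sum_k\E[\psi_{jk}^2(X_1)]$, and a pure-noise term $\asymp\tfrac1{\alpha^4}\sum_{j<J}2^j\sigma_j^4$. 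Using $\|G\|_F^2\lesssim 2^{3J}$ (the diagonal-in-$j$ blocks dominate; the off-diagonal blocks are handled by observing that a coarse Haar function is constant on the support of any finer one, so $\sum_{k,k'}(\int\psi_{jk}\psi_{j'k'}f)^2\lesssim 2^{j}\|\beta_{j'\cdot}\|_{\ell_2}^2$), $\sum_k\E[\psi_{jk}^2(X_1)]\lesssim 2^j$, and $\sum_{j<J}2^j\sigma_j^4\lesssim(\log(n\alpha^2))^{4a}2^{3J}$ shows that $\zeta_2\lesssim\alpha^{-4}(\log(n\alpha^2))^{4a}2^{3J}$ is the dominant contribution.

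Putting everything together, the risk is $\lesssim 2^{-4Js'}+\tfrac1{n\alpha^2}+\tfrac{(\log(n\alpha^2))^{2a}(2^{J(1-2s')}\vee\log(n\alpha^2))}{n\alpha^2}+\tfrac{(\log(n\alpha^2))^{4a}2^{3J}}{n^2\alpha^4}$. In the regime $0<s'\le\tfrac34$ the choice $2^{J_n}=(n\alpha^2)^{2/(4s'+3)}$ balances $2^{-4J_n s'}$ against the pure-noise term at order $(\log(n\alpha^2))^{4a}(n\alpha^2)^{-8s'/(4s'+3)}$, and a short exponent computation --- using that $s'\le\tfrac34$ is exactly what makes $\tfrac1{n\alpha^2}\le(n\alpha^2)^{-8s'/(4s'+3)}$, and that the remaining $\zeta_1$ and mixed terms carry a strictly more negative power of $n\alpha^2$ --- shows all other terms are of smaller or equal order, with the borderline case $s'=\tfrac12$ in the sum defining $\zeta_1$ accounting for the extra power of $\log$ (the ``$+1$''). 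In the regime $s'>\tfrac34$ the choice $2^{J_n}=\big(n\alpha^2/(\log(n\alpha^2))^{4a+1}\big)^{1/3}$ makes the pure-noise term $\asymp(n\alpha^2\log(n\alpha^2))^{-1}\lesssim(n\alpha^2)^{-1}$; since $s'>\tfrac34>\tfrac12$ the sum in $\zeta_1$ converges, so $n^{-1}\zeta_1\lesssim(n\alpha^2)^{-1}$, and $2^{-4J_n s'}\asymp\big(n\alpha^2/(\log(n\alpha^2))^{4a+1}\big)^{-4s'/3}\lesssim(n\alpha^2)^{-1}$ because $4s'/3>1$; this yields the parametric rate $\tfrac1{n\alpha^2}$. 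I expect the main technical obstacle to be the bookkeeping in $\zeta_2$ --- in particular the $2^{3J}$ estimate for $\|G\|_F^2$ and keeping exact track of the slowly varying $(1\vee j)^a$ weights through all the sums --- since it is this term, together with the bias, that pins down both the location $s'=\tfrac34$ of the elbow and the exponent of the logarithmic factor.
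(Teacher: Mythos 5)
Your proposal follows essentially the same route as the paper's proof: bias--variance decomposition, Hoeffding decomposition of the U-statistic into a linear part (your $\zeta_1$, the paper's $T_2$) and a degenerate part (your $\zeta_2$, the paper's $T_1$), use of the $L_3$-constraint to control the signal part of $\zeta_1$ via $\int (P_Jf)^2 f \le \|f\|_{L_3}^3$, and identification of the pure-noise contribution of order $J^{4a}2^{3J}/(n^2\alpha^4)$ as the term that, balanced against the squared bias, produces the elbow at $s'=\tfrac34$. Your $\|\Sigma\|_F^2$ bookkeeping for $\zeta_2$, the H\"older/conditional-expectation argument for the $L_3$ bound, and the treatment of the off-diagonal blocks of $G$ (a coarse Haar function is constant on the support of a finer one) are all correct and are harmless variants of the paper's explicit computations.

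There is, however, one step that fails as written: the bias bound $\|\beta_{j\cdot}\|_{\ell_2}^2 \lesssim 2^{-2js'}$ for \emph{all} $s'$. Inequality \eqref{eq:HaarNormBound} is only valid for $s<1$, and indeed the Haar system (one vanishing moment) cannot yield coefficient decay faster than $\|\beta_{j\cdot}\|_{\ell_2}^2\lesssim 2^{-2j}$ however smooth $f$ is; so for $s'\ge 1$ your claimed squared bias $2^{-4Js'}$ is not available, and your argument ``$2^{-4J_ns'}\lesssim(n\alpha^2)^{-1}$ because $4s'/3>1$'' breaks down there. The fix is what the paper does: embed $B_s^{pq}$ into a Besov space of smoothness strictly between $\tfrac34$ and $1$ (the paper takes $\tfrac56$), which gives squared bias $\lesssim 2^{-10J/3}\lesssim (n\alpha^2)^{-1}$ under your choice of $J$ in the regime $s'>\tfrac34$ --- which is all that is needed, since for $s'\le\tfrac34$ one has $s'<1$ anyway. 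A second, purely cosmetic point: the extra power of $\log$ (the ``$+1$'' in $4a+1$) does not come from the $s'=\tfrac12$ borderline of the $\zeta_1$ sum --- that term is strictly dominated by the target rate throughout $0<s'\le\tfrac34$ --- but simply from crude absorption of the lower-order logarithmic factors ($J^{2a+1}$ and $\sum_{j<J}j^{4a}2^{3j}\lesssim J^{4a+1}2^{3J}$) into the stated bound; since any power of $\log$ up to $4a+1$ is admissible, this does not affect the result.
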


\subsection{Lower bounds}

We now show that the rate of the non-interactive U-statistics approach introduced in the previous subsection is indeed optimal for estimating the quadratic functional within the class of all $\alpha$-non-interactive procedures. See Section~A.3 in the supplementary material \citep{Butucea22supp} for the proof of the following theorem.

\begin{theorem}\label{LowBoNI} Fix $n \in \mathbb{N}$, $\alpha \in (0, \infty)$, $s \in (0,1)$, $p\geq 2$, $q \geq 1$, $L>1$, $M\ge 2$ and consider the class $\bar\P_s^{pq}(L,M) := \{f\in\P_s^{pq}(L):\|f\|_\infty\le M\}$. Define $z_\alpha:= e^{2\alpha} - e^{-2\alpha}$. If $n z_\alpha^2\ge 2$, then there exists a constant $c>0$, not depending on $n$ and $\alpha$, such that
$$
\inf_{Q \in {\mathcal{Q}}_\alpha^{(NI)}} \inf_{\hat D_n} \sup_{f \in \bar{\mathcal{P}}_s^{pq}(L,M)} \E_{Q\Pr_f^n} \left[ \left|\hat D_n - D(f)\right|^2\right] \;\geq\; \frac{c}{[\log (n z^2_\alpha)]^2} \left(n z^2_{\alpha} \right)^{- \frac{8 s}{4s + 3}}.
$$
Here, the set ${\mathcal{Q}}_\alpha^{(NI)}$ contains all $\alpha$-non-interactive channels.
\end{theorem}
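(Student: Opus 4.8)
The plan is to reduce the minimax risk to a Bayesian two-point problem and to bound the resulting total-variation distance uniformly over $\mathcal Q_\alpha^{(NI)}$.

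First one fixes a resolution level $J$ and an amplitude $\gamma>0$, both to be calibrated in terms of $n z_\alpha^2$, and a density $f_0$ in the interior of $\bar\P_s^{pq}(L,M)$ (for instance $f_0\equiv1$, slightly modified so that $\|f_0\|_\infty<M$). One then takes a family $\{f_v:v\in\{-1,+1\}^m\}$ obtained by perturbing $f_0$ with Haar wavelets $\psi_{Jk}$ (at level $J$, or across a short range of levels) with random signs and amplitude $\gamma$. Using the Besov/Haar estimates of Section~\ref{sec:Besov}, in particular \eqref{eq:HaarNormBound}, one checks $\|f_v-f_0\|_{B_s^{pq}}\lesssim\gamma\,2^{J(s+1/2)}$, while positivity $f_v\ge0$ and $\|f_v\|_\infty\le M$ hold once $\gamma\,2^{J/2}\le c_0$; hence $f_v\in\bar\P_s^{pq}(L,M)$ provided $\gamma\,2^{J(s+1/2)}\le c_1$. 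By Parseval, $D(f_v)-D(f_0)=\gamma^2 m=:\rho^2$ for \emph{every} $v$, so $D$ takes the constant value $D(f_0)+\rho^2$ on the alternative. The constrained-risk inequality then yields, for any channel $Q$ and estimator $\hat D_n$,
\begin{equation*}
\sup_{f\in\bar\P_s^{pq}(L,M)}\E_{Q\Pr_f^n}\!\bigl[(\hat D_n-D(f))^2\bigr]\;\gtrsim\;\rho^4\Bigl(1-\dtv\bigl(Q\Pr_{f_0}^n,\ \overline{Q}_v\bigr)\Bigr),\qquad \overline{Q}_v:=2^{-m}\sum_v Q\Pr_{f_v}^n,
\end{equation*}
and, since every bound below is uniform in the output space $(\mathcal Z,\mathcal G)$, the union in \eqref{eq:NIset} is harmless; it remains to choose $(\gamma,J,m)$ maximising $\rho^4=\gamma^4 m^2$ under the Besov/positivity constraints while keeping $\sup_{Q\in\mathcal Q_\alpha^{(NI)}}\dtv(Q\Pr_{f_0}^n,\overline{Q}_v)\le\tfrac12$.

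The heart of the argument is an upper bound on $\chi^2(\overline{Q}_v\,\|\,Q\Pr_{f_0}^n)$ valid for \emph{all} non-interactive $Q$. By the product structure and $\int(f_v-f_0)=0$, writing $q_i(\cdot\mid x)$ for a density of $Q_i(\cdot\mid x)$ and $m_i:=Q_i\Pr_{f_0}$,
\begin{equation*}
1+\chi^2\bigl(\overline{Q}_v\,\|\,Q\Pr_{f_0}^n\bigr)=\E_{v,v'}\prod_{i=1}^n\Bigl(1+\gamma^2\,v^{\top}A^{(i)}v'\Bigr),\qquad A^{(i)}_{k\ell}=\int\frac{b^{(i)}_k b^{(i)}_\ell}{m_i},\quad b^{(i)}_k(z)=\int\bigl(q_i(z\mid x)-m_i(z)\bigr)\psi_{Jk}(x)\,dx,
\end{equation*}
each $A^{(i)}$ being symmetric positive semidefinite. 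The $\alpha$-privacy constraint \eqref{eq:MarginPriv} forces $q_i(z\mid x)/m_i(z)\in[e^{-\alpha},e^{\alpha}]$; together with the disjoint (length-$2^{-J}$) supports of the $\psi_{Jk}$ this controls the $b^{(i)}_k$ and yields bounds of the form $\trace A^{(i)}\lesssim z_\alpha^2$ and $\|A^{(i)}\|_{\mathrm{op}}\lesssim z_\alpha^2\,2^{-J}$ for the construction at hand, hence $A:=\sum_i A^{(i)}$ satisfies $\trace A\lesssim n z_\alpha^2$ and $\|A\|_{\mathrm{op}}\lesssim n z_\alpha^2 2^{-J}$, so $\|A\|_F^2\le\|A\|_{\mathrm{op}}\trace A\lesssim n^2 z_\alpha^4 2^{-J}$. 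One then controls the Rademacher chaos: using $1+t\le e^t$, then $\E_{v'}\exp(\gamma^2 v^{\top}Av')=\prod_k\cosh(\gamma^2(Av)_k)\le\exp(\tfrac12\gamma^4\|Av\|_2^2)$, and finally a Hanson--Wright-type exponential inequality for $\E_v\exp(\tfrac12\gamma^4 v^{\top}A^2v)$, one obtains $\chi^2\le\tfrac14$ as soon as $\gamma^4\|A\|_F^2\lesssim1$ and $\gamma^4\|A\|_{\mathrm{op}}^2\lesssim1$, i.e.\ as soon as $\gamma^2 n z_\alpha^2\lesssim 2^{J/2}$. Balancing this against the Besov constraint $\gamma\,2^{J(s+1/2)}\le c_1$ places the resolution level at $2^J\asymp(n z_\alpha^2)^{2/(4s+3)}$ and gives $\rho^2=\gamma^2 m\asymp 2^{-2sJ}\asymp(n z_\alpha^2)^{-4s/(4s+3)}$, whence (up to logarithmic factors from the truncation) $\rho^4\gtrsim[\log(n z_\alpha^2)]^{-2}(n z_\alpha^2)^{-8s/(4s+3)}$, as claimed.

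The main difficulty lies precisely in establishing the improved operator-norm bound $\|A^{(i)}\|_{\mathrm{op}}\lesssim z_\alpha^2 2^{-J}$ uniformly over $Q\in\mathcal Q_\alpha^{(NI)}$: a careless single-resolution construction does not satisfy it, because a one-bit threshold channel $Z_i=\mathbf 1\{g(X_i)>0\}$ with a matched level-$J$ sign pattern $g$ produces a rank-one $A^{(i)}$ with $\|A^{(i)}\|_{\mathrm{op}}\asymp z_\alpha^2$ (no $2^{-J}$ gain), and this channel already separates $Q\Pr_{f_0}^n$ from $\overline{Q}_v$ once $n z_\alpha^2\gamma^2\gtrsim1$, yielding only the suboptimal exponent $8s/(4s+2)$. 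Obtaining the sharp rate --- equivalently, the elbow at $s=\tfrac34$ matching the upper bound of Theorem~\ref{noninteractiveUB} --- requires designing the prior so that \emph{no} low-complexity non-interactive channel can concentrate its privacy budget on the perturbation (so that every extremal $A^{(i)}$ is forced to spread over all $2^J$ coordinates, reproducing the $2^{3J}/(n^2\alpha^4)$ term that governs the variance in the upper-bound analysis), together with the correspondingly delicate chaos estimate; this is exactly the new effect that distinguishes the non-interactive elbow ($s=\tfrac34$) from the sequentially interactive one ($s=\tfrac12$) and from the classical direct-observation case ($s=\tfrac14$).
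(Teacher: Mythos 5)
There is a genuine gap, and you have in fact located it yourself without closing it. Your fixed single-level Haar prior with i.i.d.\ Rademacher signs does \emph{not} yield the claimed rate uniformly over $\mathcal Q_\alpha^{(NI)}$: as your own one-bit threshold example shows, there is a non-interactive channel for which $A^{(i)}$ is rank one with $\|A^{(i)}\|_{\mathrm{op}}\asymp z_\alpha^2$, so the chaos bound only gives $\chi^2\lesssim 1$ under $\gamma^2 n z_\alpha^2\lesssim 1$, which after balancing produces the exponent $8s/(4s+2)$ --- the \emph{interactive} rate --- rather than $8s/(4s+3)$. The assertion ``$\|A^{(i)}\|_{\mathrm{op}}\lesssim z_\alpha^2 2^{-J}$ for the construction at hand'' is therefore unsupported, and the final paragraph, which says the sharp rate ``requires designing the prior so that no low-complexity non-interactive channel can concentrate its privacy budget on the perturbation,'' is a statement of the problem, not a proof. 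Since the infimum over $Q$ sits \emph{outside} the supremum over $f$, a single prior need not work for all channels; the construction must be allowed to depend on $Q$, and your proposal never specifies such a construction.

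The paper's resolution is precisely a channel-adapted spectral prior. For each fixed $Q$ with marginal densities $q_i$, one forms the operator $K=\tfrac1n\sum_i K_i^\star K_i$ with kernel $F(x,y)=\tfrac1n\sum_i\int q_i(z|x)q_i(z|y)/g_{0,i}(z)\,d\mu_i(z)$, diagonalizes it on the Haar space $W_m$, and perturbs $f_0$ along its eigenfunctions $v_k$ with amplitudes $\propto 1/\lambda_{k,\alpha,m}$, where $\lambda_{k,\alpha,m}=(\lambda_k/z_\alpha)\vee 2^{-m/2}$. This does three things at once: (a) it diagonalizes the Rademacher chaos, so the $\chi^2$ bound factorizes into $\prod_k\cosh\bigl(2^{-2m(s+1)}\delta^2 n\,\lambda_k^2/\lambda_{k,\alpha,m}^2\bigr)$ with $\lambda_k^2/\lambda_{k,\alpha,m}^2\le z_\alpha^2$ in \emph{every} direction --- exactly the equalization your rank-one counterexample destroys for a fixed prior; (b) the global trace bound $\sum_k\lambda_k^2\le z_\alpha^2$ (which follows from $e^{-2\alpha}\le q_i/g_{0,i}\le e^{2\alpha}$ and orthonormality of the $v_k$) together with the harmonic--arithmetic mean inequality shows that the now $\nu$-independent-up-to-constants separation $|D(f_\nu)-D(f_0)|\ge\tfrac34\delta^2 2^{-2ms}$ survives the reweighting; (c) the truncation at $2^{-m/2}$ plus a Hoeffding/union-bound argument forces $\delta\asymp 1/\sqrt{m}$ to keep $f_\nu$ a density in the Besov ball (only with high $\Pr_\nu$-probability, which requires the paper's more careful reduction handling the bad set $\bar A_\gamma$), and this is the source of the $[\log(nz_\alpha^2)]^{-2}$ factor. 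Without this adapted construction your argument establishes only the weaker interactive lower bound, so the proof as proposed does not go through.
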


\begin{remark}\label{rem:alpha2}
Since $\frac{1}{2}(e^{2\alpha}-e^{-2\alpha}) \le e^{2\alpha}-1$, we immediately get the slightly smaller lower bound
$$
c'\left[ n(e^{2\alpha}-1)^2\right]^{-\frac{8s}{4s+3}}\left(\log\left[ n (e^{2\alpha}-1)^2\right]\right)^{-2},
$$
which, for bounded $\alpha$, reduces to an expression in terms of the more familiar quantity $n\alpha^2$, i.e.,
$$
c''\left(n\alpha^2\right)^{-\frac{8s}{4s+3}} [\log(n\alpha^2)]^{-2}.
$$
\end{remark}

Theorem~\ref{LowBoNI} shows that the rate obtained in Theorem~\ref{noninteractiveUB} is indeed optimal (up to logarithmic factors), at least in the case $p\ge 2$, that is, $s'=s$.

Finally, we note that one can easily deduce a lower bound of the form $c(n\alpha^2)^{-1}$, even for the larger class $\mathcal Q_\alpha^{(SI)} \supseteq  \mathcal Q_\alpha^{(NI)}$ and over general Besov classes $\bar{\P}_s^{pq}(L,M)$, $s>0$, $1\le p,q\le\infty$, using Corollary~3.1 of \citet{Rohde18}. To that end, we only need to lower bound the modulus of continuity of the quadratic functional w.r.t. the total variation distance, that is,
$$
\omega_{TV}(\eps) := \sup\left\{\left|D(f_0) - D(f_1)\right| : \frac 12 \int\left|f_0-f_1\right|\le \eps, f_0,f_1\in \bar{\P}_s^{pq}(L,M) \right\},
$$
by an expression of order $\eps$, because a minimax lower bound is of the form $c_0[\omega_{TV}(c_1(n\alpha^2)^{-1/2})]^2$.
But this can easily be done for $\eps\in(0,1]$, by choosing $f_0 \equiv 1$ and $f_1(x) = f_0(x) + \delta g(x/\eps)$, for some non-trivial $g\in B_s^{pq}$ with $\int_0^1 g(x)dx=0$ and $\|g\|_\infty<\infty$, and for $0<\delta\le [(L-1)\land1]/(\|g\|_\infty\lor\|g\|_{B_s^{pq}})$. This choice implies that $f_1(x)\ge0$, $\|f_1\|_{B_{s}^{pq}} \le 1 + \eps\delta\|g\|_{B_{s}^{pq}}\le L$, $\|f_1\|_\infty\le 1 + \delta\|g\|_\infty\le 2\le M$, $|D(f_0) - D(f_1)| = \eps \delta^2\|g\|_2^2$ and $\int|f_0-f_1| = \eps\delta\|g\|_1$. Thus, $\omega_{TV}(\eps\delta\|g\|_1/2) \ge \eps\delta^2\|g\|_2^2$.


\section{Sequentially interactive privacy protocols}
\label{SEC:SEQINTER}

In Section~\ref{SEC:NONINTER} we have presented an $\alpha$-non-interactive procedure for estimating the quadratic functional $D=\int_0^1f^2(x)dx$ and established its minimax optimality within the class of all $\alpha$-non-interactive procedures. If we leave this class, however, and also allow for sequential interaction between data owners, then we can improve substantially over the rate of the best non-interactive procedure. In the present section we pursue such improvements and prove their optimality.

\subsection{Upper bounds}
\label{sec:SeqIntUpper}

We first provide a concrete example of a locally private estimation procedure which relies on some sequential communication between individual data providers and which achieves a faster convergence rate than that of Section~\ref{SEC:NONINTER}.

For convenience, we assume that the sample size is $2n$ and we split the data providing individuals into two groups of size $n$, such that the first group holds data $X^{(1)}=(X_1^{(1)}, \dots, X_n^{(1)})$ and the second group holds the data $X^{(2)}=(X_1^{(2)}, \dots, X_n^{(2)})$. Now, the individuals owning the data $X^{(1)}$ use the non-interactive privacy mechanism \eqref{eq:nonInterQ}, which is based on the Haar wavelets, to generate arrays $Z_i = Z_i^{(1)}$ based on their private information $X_i^{(1)}$. We write $Z^{(1)} = (Z_1^{(1)}, \dots, Z_n^{(1)})$. These sanitized data are now used to estimate the unknown data generating density $f\in\P_s^{pq}(L)$ at a point $x\in[0,1]$, by \citep[cf.][]{Butucea19}
\begin{equation}\label{eq:densEst}
\hat{f}_J^{(1)}(x) := \sum_{j=-1}^{J-1} \sum_{k=0}^{(1\lor 2^j)-1} \hat \beta_{jk} \psi_{jk}(x),
\end{equation}
with $\hat{\beta}_{jk}$ as in \eqref{estcoeff}, i.e.,
$$
\hat \beta_{jk} = \frac 1n \sum_{i=1}^n Z_{ijk}^{(1)}, \quad j = - 1,...,J-1, \, k=0,...,(1\lor 2^j)-1.
$$
Now, in order to privately estimate the quadratic functional $D = D(f) = \int_0^1 f^2(x)\,dx$, we instead privately estimate the (random) linear functional
$$
f\mapsto \int_0^1 \hat f_J^{(1)}(x) f(x)\,dx.
$$
This second step is carried out using the rate optimal mechanism of \citet{Rohde18}, that is, for some tuning parameter $\tau>0$ and for given $Z^{(1)}$ (or $\hat{f}_J^{(1)}$), each individual from the second group independently generates $Z_i^{(2)}$ by
\begin{align}\label{eq:Z_i2}
Z_i^{(2)} = \begin{cases}
\tau\frac{e^\alpha+1}{e^\alpha-1}, &\text{with probability } \frac12\left(1+ \frac{\Pi_\tau[ \hat{f}_J^{(1)}(X_i^{(2)})]}{\tau\frac{e^\alpha+1}{e^\alpha-1}} \right),\\
-\tau\frac{e^\alpha+1}{e^\alpha-1}, &\text{with probability }  \frac12\left(1- \frac{\Pi_\tau[ \hat{f}_J^{(1)}(X_i^{(2)})]}{\tau\frac{e^\alpha+1}{e^\alpha-1}} \right),
\end{cases}
\end{align}
where $\Pi_\tau[y] = (\tau\land y)\lor(-\tau)$. Write $Z^{(2)} = (Z_1^{(2)}, \dots, Z_n^{(2)})$. Note that the projection of $\hat f_J^{(1)}(X_i^{(2)})$ onto $[-\tau, \tau]$ ensures that the probabilities belong to $[0,1]$. Moreover, notice that we have $\E[Z_i^{(2)}|Z^{(1)}, X_i^{(2)}] = \Pi_\tau[ \hat{f}_J^{(1)}(X_i^{(2)})]$ and $\E[Z_i^{(2)}| Z^{(1)}] = \int_0^1 \Pi_\tau[ \hat{f}_J^{(1)}(x)] f(x)\, dx \to \int_0^1 \hat f_J^{(1)}(x) f(x)\,dx$ as $\tau\to\infty$. Our final estimator is then given by
\begin{equation}\label{eq:DhatSeq}
\tilde{D}_n = \tilde{D}_{n,\tau} = \frac{1}{n}\sum_{i=1}^n Z_i^{(2)}.
\end{equation}

We denote the above mechanism that outputs $(Z^{(1)}, Z^{(2)})$, given original data $(X^{(1)}, X^{(2)})$, by $Q^{(SI)}$. It clearly has a sequential structure because each $Z_i^{(2)}$ in the second group depends on the sanitized data $Z^{(1)}$ from the first group through $\hat{f}_J^{(1)}$, but on none of the other $Z_j^{(2)}$, $j\ne i$. It is also easy to see that it satisfies \eqref{eq:alphaSeq} and hence, it is $\alpha$-sequentially interactive, i.e., $Q^{(SI)}\in\mathcal Q_\alpha^{(SI)}$. The following theorem presents an upper bound on the risk of the estimation method proposed in \eqref{eq:DhatSeq}. Its proof is deferred to Section~B.2 of the supplement.


\begin{theorem}\label{interactiveUB}
Fix $M,L>0$, $1\le p,q\le \infty$ and $s> \left( \frac1p - \frac12\right)_+$ and consider the Besov class $\bar\P_s^{pq}(L,M) := \{f\in\P_s^{pq}(L):\|f\|_\infty\le M\}$. Define $s' = s - \left( \frac1p - \frac12\right)_+$. For $n\in\N$, $\alpha\in(0,1]$, consider the estimator $\tilde D_n$ defined in \eqref{eq:DhatSeq} based on the private wavelet estimator $\hat f_J^{(1)}$ in \eqref{eq:densEst}, with cut-off
$$
\tau^2 = [K^2M^2 (1 \vee  J^{2a + 1} 2^{J(1-2(s'\land \frac12)) })]\lor1,
$$
for a sufficiently large constant $K\ge2$ (that can be chosen independently of $n$ and $\alpha$) and for $J=J_n$ such that $2^{J_n} = (n \alpha^2)^{\frac 1{2(s' \wedge 1 ) + 1}}$, where $a>1$ is the constant from the privacy mechanism \eqref{eq:nonInterQ}. Then,
\begin{align*}
\sup_{f\in\bar{\P}_s^{pq}(L,M)} \E_{Q_f^{(SI)}}\left[\left|\tilde D_n - D(f)\right|^2\right] \lesssim \mathfrak r_n^{(SI)}(\alpha, a, s')
\end{align*}
with $$
\mathfrak r_n^{(SI)}(\alpha, a, s') = 
\begin{cases}
\frac{1}{n\alpha^2}, & s' > \frac 12\\
(\log (n \alpha^2))^{2a+1} (n \alpha^2)^{-\frac{4s'}{2s'+1}}, & s' \leq \frac 12,
\end{cases}
$$
provided that $n\alpha^2 > c_0$, for a finite constant $c_0>0$ that does not depend on $n$ and $\alpha$.
\end{theorem}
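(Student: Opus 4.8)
The plan is to bound the risk by the usual bias-variance split, conditioning first on the sanitized first-sample $Z^{(1)}$. Write $\hat f = \hat f_J^{(1)}$ and $\Pi = \Pi_\tau$. Since $\E[Z_i^{(2)}\mid Z^{(1)}, X_i^{(2)}] = \Pi[\hat f(X_i^{(2)})]$ and the $Z_i^{(2)}$ are, conditionally on $Z^{(1)}$, i.i.d.\ bounded by $\tau\frac{e^\alpha+1}{e^\alpha-1}$, we have
\begin{align*}
\E\left[(\tilde D_n - D(f))^2\right] = \E\left[\left(\E[\tilde D_n\mid Z^{(1)}] - D(f)\right)^2\right] + \E\left[\Var(\tilde D_n\mid Z^{(1)})\right],
\end{align*}
and $\Var(\tilde D_n\mid Z^{(1)}) \le \frac 1n \tau^2\big(\frac{e^\alpha+1}{e^\alpha-1}\big)^2 \lesssim \frac{\tau^2}{n\alpha^2}$, using $\alpha\in(0,1]$. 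The conditional mean is $\E[\tilde D_n\mid Z^{(1)}] = \E_{X}\big[\Pi[\hat f(X)]\big] = \int_0^1 \Pi[\hat f(x)] f(x)\,dx$, so the (random) bias decomposes as
\begin{align*}
\int \Pi[\hat f] f - \int f^2 = \underbrace{\int (\Pi[\hat f] - \hat f)f}_{(\mathrm{I})} + \underbrace{\int (\hat f - f)f}_{(\mathrm{II})}.
\end{align*}
Term $(\mathrm{II})$ equals $\langle \hat f - f, f\rangle$, which by orthogonality of the Haar basis is $-\sum_{j\ge J}\sum_k \beta_{jk}^2 + \sum_{j<J}\sum_k(\hat\beta_{jk}-\beta_{jk})\beta_{jk}$; its square expectation is controlled by the squared Besov tail $\big(\sum_{j\ge J}2^{-2js'}\big)^2\asymp 2^{-4Js'}$ (via \eqref{eq:HaarNormBound} and $p\ge$ the relevant embedding, using $s'=s-(1/p-1/2)_+$) plus the estimation variance $\sum_{j<J}\Var(\hat\beta_{jk})\beta_{jk}^2$, which — since $\Var(\hat\beta_{jk}) = \tfrac1n(\Var\psi_{jk}(X_1) + 2\sigma_j^2\sigma^2/\alpha^2)$ with $\sigma_j^2\asymp (1\vee j)^{2a}2^j$ — is of order $\tfrac{1}{n\alpha^2}\sum_{j<J}j^{2a}2^j\cdot 2^j\|\beta_{j\cdot}\|_{\ell_2}^2 \lesssim \tfrac{J^{2a}2^{2J}}{n\alpha^2}$ when $s'\le 1/2$ (and better when $s'>1/2$ because then $\sum 2^j\|\beta_{j\cdot}\|_{\ell_2}^2<\infty$). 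Term $(\mathrm{I})$ is bounded by $\int_{\{|\hat f(x)|>\tau\}}|\hat f(x)|f(x)\,dx$, and this is where the truncation level $\tau$ enters.

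The heart of the argument is the choice of $\tau$: it must be large enough that the clipping term $(\mathrm{I})$ is negligible, yet small enough that the conditional variance $\tau^2/(n\alpha^2)$ does not dominate. The key deterministic estimate, to be established as a lemma, is a sup-norm bound on $\hat f_J^{(1)}$: with high probability (and in a suitable moment sense),
\begin{align*}
\|\hat f_J^{(1)}\|_\infty \;\lesssim\; \|f\|_\infty + \Big(\text{fluctuation term of order } \sqrt{\tfrac{J^{2a+1}2^{J}}{n\alpha^2}} \cdot \text{(further }2^{J/2}\text{ from } \sum_k)\Big),
\end{align*}
more precisely $\|\hat f_J^{(1)} - \E\hat f_J^{(1)}\|_\infty \lesssim \sqrt{J^{2a}2^{2J}\log/(n\alpha^2)}$ by a union bound over the $\lesssim 2^J$ Haar coefficients using sub-exponential (Laplace) concentration of $\hat\beta_{jk}-\beta_{jk}$, while $\|\E\hat f_J^{(1)}\|_\infty \le \|f\|_\infty + \|f - P_J f\|_\infty \lesssim M$ for $s'>0$ via the Besov embedding. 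Matching this with $2^{J_n} = (n\alpha^2)^{1/(2(s'\wedge1)+1)}$ shows $J^{2a}2^{2J}/(n\alpha^2) \asymp J^{2a}2^{2J(1-(s'\wedge1))-\text{(lower order)}}$... and one checks that the prescribed $\tau^2 = K^2M^2(1\vee J^{2a+1}2^{J(1-2(s'\wedge\frac12))})\vee 1$ exceeds $C\|\hat f\|_\infty^2$ with overwhelming probability; on that event $(\mathrm I)=0$, and on its (polynomially small, in fact exponentially small in $2^J$) complement one uses crude bounds ($|\hat f|\lesssim 2^J\max|\hat\beta_{jk}|$, Cauchy–Schwarz, and the boundedness of $D(f)$) to show the contribution is lower order. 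Then the overall rate is
\begin{align*}
2^{-4J_ns'} \;+\; \frac{J_n^{2a}2^{2J_n}}{n\alpha^2}\cdot\mathbf 1\{s'\le \tfrac12\} \;+\; \frac{\tau^2}{n\alpha^2},
\end{align*}
and substituting $2^{J_n}$ yields $\tfrac{1}{n\alpha^2}$ for $s'>\tfrac12$ and $(\log(n\alpha^2))^{2a+1}(n\alpha^2)^{-4s'/(2s'+1)}$ for $s'\le\tfrac12$, after noting $\tau^2/(n\alpha^2) \asymp J^{2a+1}2^{J(1-2s')}/(n\alpha^2) \asymp J^{2a+1}(n\alpha^2)^{-4s'/(2s'+1)}$ in the regime $s'\le 1/2$ (and $\tau^2\asymp 1$ once $s'>1/2$, giving the parametric rate).

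The main obstacle, as usual for clipped linear-functional estimators, is making the sup-norm control of $\hat f_J^{(1)}$ both sharp enough (to keep $\tau$ small) and uniform over $f\in\bar\P_s^{pq}(L,M)$, and then verifying that the exceptional event where clipping bites contributes only lower-order terms — this requires the concentration to be strong enough that a $2^J$-term union bound still leaves an error beating the target rate, which is exactly why the logarithmic factors and the $(1\vee j)^a$ weights in \eqref{eq:nonInterQ} appear. A secondary technical point is handling the case $s'\in(1/2,1)$ versus $s'\ge 1$ uniformly (hence the $s'\wedge 1$ and $s'\wedge\frac12$ truncations in the statement): for $s'\ge 1$ the Haar system no longer characterizes the smoothness, but one only needs the one-sided inequality \eqref{eq:HaarNormBound} with the exponent $s\wedge 1$, together with the fact that $\omega_1(f,2^{-j},p)$ is controlled by $\omega_r(f,2^{-j},p)$ up to lower-order moduli — so the analysis goes through with $s'$ replaced by $s'\wedge 1$ in the bias term, which is still fast enough for the claimed rates since the elbow is at $s'=1/2 < 1$.
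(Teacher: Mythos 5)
Your overall architecture matches the paper's: condition on $Z^{(1)}$, bound the conditional variance by $\tau^2(\tfrac{e^\alpha+1}{e^\alpha-1})^2/n\lesssim \tau^2/(n\alpha^2)$, split the conditional bias into the linear-functional part $\int(\hat f^{(1)}_J-f)f$ and the clipping part $\int(\Pi_\tau[\hat f^{(1)}_J]-\hat f^{(1)}_J)f$, and kill the clipping part by showing that $\tau$ dominates the fluctuations of $\hat f_J^{(1)}$. The one genuine methodological difference is that you control the clipping via a sup-norm bound on $\hat f_J^{(1)}-\E\hat f_J^{(1)}$ (union bound over the $\asymp 2^J$ Haar coefficients plus a crude argument on the exceptional event), whereas the paper works pointwise: it proves a sub-exponential concentration inequality for $\hat f_J^{(1)}(x)-\E[\hat f_J^{(1)}(x)]$ at each fixed $x$ (Proposition~\ref{exponineq}) and then integrates the tail, $\E[\int(\hat f_J^{(1)}-\tau)_+f]\le\int\int_{\tau/2}^\infty\Pr(Y_x\ge u)\,du\,f(x)\,dx$, avoiding any union bound and any exceptional-event bookkeeping. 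Your route is workable (the union bound costs a factor $\log(2^J)\asymp J$, which the $J^{2a+1}$ in $\tau^2$ absorbs, and the failure probability can be driven below any polynomial by enlarging $K$), but the paper's pointwise tail integration is cleaner and is exactly what Lemma~\ref{lemma:Integrals} is built for.

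There is, however, a concrete error in your rate assembly. For the noise contribution to $\Var[\int\hat f_J^{(1)}f]$ you write $\tfrac{1}{n\alpha^2}\sum_{j<J}j^{2a}2^j\cdot 2^j\|\beta_{j\cdot}\|_{2}^2\lesssim J^{2a}2^{2J}/(n\alpha^2)$. The extra factor $2^j$ is spurious: since $\sum_k\Var(\hat\beta_{jk})\beta_{jk}^2\lesssim \tfrac{\sigma_j^2}{n\alpha^2}\|\beta_{j\cdot}\|_2^2$ with $\sigma_j^2=(1\vee j)^{2a}2^j$, the correct order is $\tfrac{1}{n\alpha^2}\sum_{j<J}j^{2a}2^{j(1-2s')}\lesssim \tfrac{1\vee J^{2a+1}2^{J(1-2s')}}{n\alpha^2}$, i.e., the same order as $\tau^2/(n\alpha^2)$. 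Your bound $J^{2a}2^{2J}/(n\alpha^2)=J^{2a}(n\alpha^2)^{(1-2s')/(2s'+1)}$ actually \emph{diverges} for $s'<\tfrac12$ under the prescribed $2^{J}=(n\alpha^2)^{1/(2(s'\wedge1)+1)}$, so the claimed substitution "yields the theorem" fails as written; with the corrected exponent $2^{J(1-2s')}$ it does yield $(\log(n\alpha^2))^{2a+1}(n\alpha^2)^{-4s'/(2s'+1)}$. A smaller imprecision: for $s'\ge1$ the one-sided Haar bound \eqref{eq:HaarNormBound} requires smoothness strictly below $1$, so one cannot literally use exponent $s'\wedge1$; the paper embeds into $B^{2q}_{5/6}$ and uses the exponent $5/6$, which, as you correctly observe, is more than enough since the elbow sits at $1/2$.
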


Theorem~\ref{interactiveUB} shows that faster rates than those of Section~\ref{SEC:NONINTER} can be attained using a sequentially interactive privacy mechanism. Indeed, the elbow effect occurs at the value $s'=\frac 12$ instead of $s'=\frac 34$
in Theorem~\ref{noninteractiveUB}, and in case $s' \leq \frac 12$ we have that
$$
(n \alpha^2)^{-\frac{4s'}{2s'+1}} / (n \alpha^2)^{-\frac{8s'}{4s'+3}} \to 0, \quad \text{as } n \alpha^2 \to \infty.
$$
Intuitively, a sequentially interactive privacy mechanism increases the information that the sanitized sample contains about the unknown parameter of interest. However, that this additional information can be exploited to obtain faster rates than those of non-interactive procedures can not be observed for the problem of density estimation in $L_r$ or of estimating linear functionals of the density \citep[cf.][]{Rohde18, Butucea19}.

\subsection{Lower bounds}
\label{sec:lowerB:SI}

In this subsection we show that the rate of the sequentially interactive procedure introduced in Subsection~\ref{sec:SeqIntUpper} is indeed optimal. See Section~B.3 in the supplement \citep{Butucea22supp} for the proof of the following theorem.

\begin{theorem}\label{thm:SIlowerB}
Fix $n\in\N$, $\alpha\in(0,\infty)$, $s\in(0,1)$, $p,q\in[1,\infty]$, $L>1, M\ge2$ and let the class $\bar\P_s^{pq}(L,M)$ be defined as in Theorem~\ref{interactiveUB}. Define $z_\alpha := e^{2\alpha}-e^{-2\alpha}$. Then, if $n z_\alpha^2\ge1$, there exists a constant $c>0$ not depending on $n$ and $\alpha$, such that
$$
\inf_{Q\in\mathcal Q_\alpha^{(SI)}} \inf_{\hat{D}_n}\sup_{f\in\bar{\P}_s^{pq}(L,M)}\E_{Q\Pr_f^n}\left[ \left|\hat{D}_n - D(f)\right|^2\right]\;\ge\; c \left[n z_\alpha^2\right]^{-\frac{4s}{2s+1}}.
$$
\end{theorem}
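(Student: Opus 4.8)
The plan is to reduce the estimation problem to a testing problem between a simple null and a carefully constructed mixture alternative, and then to control the total-variation distance between the induced distributions of the sanitized sample $Z$ using the information-contraction inequalities for sequentially interactive channels from \citet{Duchi17, Rohde18}. Concretely, I would fix the reference density $f_0\equiv 1$ on $[0,1]$ and, at a resolution level $j$ to be chosen, perturb it by a random sign combination of the Haar wavelets at that level: for $\eta=(\eta_k)_{k=0}^{2^j-1}\in\{-1,+1\}^{2^j}$ set
$$
f_\eta(x) = 1 + \gamma\, 2^{-j/2}\sum_{k=0}^{2^j-1}\eta_k\,\psi_{jk}(x),
$$
where $\gamma>0$ is a small constant chosen so that $f_\eta\ge0$, $\|f_\eta\|_\infty\le M$, and $\|f_\eta\|_{B_s^{pq}}\le L$; since the $\psi_{jk}$ at level $j$ have disjoint supports and $\|\psi_{jk}\|_\infty = 2^{j/2}/2$, the perturbation has sup-norm $\gamma/2$, and the Haar coefficient bound together with $p\ge1$ gives a Besov norm increment of order $\gamma$, uniformly in $j$ — so $\gamma$ can indeed be taken independent of $n,j$. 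The key point is that $D(f_\eta) - D(f_0) = \gamma^2 2^{-j}\sum_k \eta_k^2 = \gamma^2$ is the \emph{same} for every $\eta$, so these densities are all at quadratic-functional distance $\asymp\gamma^2$ from $f_0$ but are statistically hard to distinguish from $f_0$ in the private model; the standard two-point-with-mixture (Le Cam / Assouad-type) argument then yields a lower bound of order $\gamma^4$ whenever $\Pr_{f_0}$-mixture and the uniform mixture $\bar Q = 2^{-2^j}\sum_\eta Q_{f_\eta}$ have total variation bounded away from $1$.

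The heart of the argument is bounding $\dtv(Q_{f_0}, \bar Q)$, or rather the $\chi^2$-type quantity that controls it, for an \emph{arbitrary} $\alpha$-sequentially interactive channel $Q$. Here I would invoke the private data-processing inequality in the sequentially interactive setting (Theorem 1 of \citet{Duchi17}, as used in \citet{Rohde18}): for any such channel, the mutual-information / $\chi^2$ contraction gives, after symmetrization over $\eta$,
$$
\dtv^2(Q_{f_0},\bar Q) \;\lesssim\; n\,(e^\alpha-1)^2 \;\sup_{k}\; \big\| \E_{f_0}[\,\text{perturbation score}\,] \big\|^2,
$$
more precisely a bound of the form $n z_\alpha^2$ times a sum over $k$ of squared $L^2(\Pr_{f_0})$-norms of the individual building blocks $x\mapsto \gamma 2^{-j/2}\psi_{jk}(x)$. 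Each such building block has squared $L^2$-norm $\gamma^2 2^{-j}\|\psi_{jk}\|_2^2 = \gamma^2 2^{-j}$, and there are $2^j$ of them, so the sum is $\gamma^2$ and the bound becomes $\dtv^2(Q_{f_0},\bar Q)\lesssim n z_\alpha^2\,\gamma^2$. This is where the extra power of the rate compared to the non-interactive case comes from: in the non-interactive bound one gets an additional $2^{-j}$ factor from a second application of Cauchy–Schwarz / the orthogonality of the $\psi_{jk}$ within each channel, which is not available once the channels may adapt sequentially. To make $\dtv$ bounded away from $1$ we therefore only need $n z_\alpha^2 \gamma^2 \lesssim 1$; but $\gamma$ is a \emph{constant}, so this cannot hold for large $n$ — which signals that the right construction must instead let $\gamma=\gamma_n$ shrink, or equivalently use a perturbation supported on a growing number $m\le 2^j$ of blocks with per-block amplitude tuned so that $D(f_\eta)-D(f_0)\asymp \eps_n^2$ and $n z_\alpha^2 \eps_n^2 \cdot (\text{block count})^{-1}$-type quantities balance. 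Optimizing $m$ (equivalently $j$) under the constraints $f_\eta\in\bar\P_s^{pq}(L,M)$ — the binding constraint being the Besov norm, which forces the per-block amplitude to be at most $\asymp 2^{-j(s+1/2)}$ — gives $\eps_n^2 \asymp 2^{-j}\cdot 2^j\cdot 2^{-2j(s+1/2)}\cdot(\cdots)$ and leads, after setting $2^j\asymp (nz_\alpha^2)^{1/(2s+1)}$, to the claimed rate $[n z_\alpha^2]^{-4s/(2s+1)}$ for the squared risk. Finally, $z_\alpha = e^{2\alpha}-e^{-2\alpha}\asymp\alpha$ for bounded $\alpha$ recovers the $(n\alpha^2)^{-4s/(2s+1)}$ form, matching Theorem~\ref{interactiveUB} up to the logarithmic factor.

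The main obstacle I anticipate is getting the sequentially interactive information bound with the correct dependence on the number of perturbation blocks: one must apply the Duchi–Jordan–Wainwright / Rohde–Steinberger contraction inequality to the mixture over $\eta$ in a way that only pays $n z_\alpha^2$ times the \emph{sum} (not the square) of the per-block $L^2$ norms, which requires carefully expanding the likelihood ratio $\frac{dQ_{f_\eta}}{dQ_{f_0}}$, using that the $\eta_k$ are independent signs so that all odd moments vanish and the cross terms between distinct blocks $k\ne k'$ drop out by the disjointness of the supports of $\psi_{jk}$, and then bounding the resulting quadratic form by its trace. This is the step where the distinction between interactive and non-interactive is encoded, and it is also the step most prone to losing or gaining a spurious factor of $2^j$; I would handle it by following the structure of the lower-bound proof in \citet{Rohde18} for linear functionals and adapting it to the fact that here the functional values $D(f_\eta)$ are constant across $\eta$ (so the "signal" lives entirely in the second moment of the perturbation, not its mean). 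Everything else — verifying $f_\eta\in\bar\P_s^{pq}(L,M)$, the reduction from estimation to testing, and the final optimization over $j$ — is routine.
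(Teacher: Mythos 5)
Your overall architecture is the same as the paper's: a perturbation family $f_\nu = 1 + \delta\,2^{-m(s+\frac12)}\sum_{k}\nu_k\psi_{mk}$ whose quadratic functional $D(f_\nu)=D(f_0)+\delta^2 2^{-2ms}$ is \emph{constant} in $\nu$, a reduction to testing $f_0$ against the uniform mixture $\bar Q^n$, an information bound that is linear in $n$ (obtained by tensorizing the KL divergence along the sequential structure of the channel, conditionally on $z_{1:i-1}$), and the calibration $2^m\asymp(nz_\alpha^2)^{1/(2s+1)}$. You also correctly self-correct the amplitude: your opening construction with constant $\gamma$ has Besov norm increment $\gamma 2^{js}$, not $\gamma$ (the claim "uniformly in $j$" is false), and the binding Besov constraint indeed forces per-coefficient amplitude $\asymp 2^{-j(s+\frac12)}$, exactly as in the paper.

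However, the central quantitative step --- the contraction bound for sequentially interactive channels --- is stated incorrectly, and as stated it does not yield the theorem. You claim $\dtv^2(Q_{f_0},\bar Q)\lesssim nz_\alpha^2\sum_k\|a\,\psi_{jk}\|_{L^2}^2 = nz_\alpha^2\,\|f_\eta-f_0\|_{L^2}^2$, where $a$ is the per-coefficient amplitude. With $a=\delta 2^{-j(s+\frac12)}$ this equals $nz_\alpha^2\,\delta^2 2^{-2js}$, and under your own choice $2^j\asymp(nz_\alpha^2)^{1/(2s+1)}$ it equals $\delta^2(nz_\alpha^2)^{1/(2s+1)}\to\infty$: your bound does not show the hypotheses are indistinguishable, so no lower bound follows; conversely, forcing your bound to be $O(1)$ would only give the squared risk $(nz_\alpha^2)^{-2}$, which is weaker than $(nz_\alpha^2)^{-4s/(2s+1)}$. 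The correct per-observation bound is smaller by a factor $2^{-j}$, namely $z_\alpha^2 a^2$ rather than $z_\alpha^2\, 2^j a^2$. The paper gets it by bounding each conditional KL by a $\chi^2$, centering the conditional likelihood ratio at $c_{\alpha,0}=\frac12(e^{2\alpha}+e^{-2\alpha})$ so that the resulting function $g_i$ satisfies $|g_i|\le c_{\alpha,1}=\frac12 z_\alpha$ pointwise (this is where $\alpha$-privacy of each $Q_i(\cdot\,|\,x_i,z_{1:i-1})$ enters, via Lemma~\ref{lemma:Qdensities}), averaging over the Rademacher signs to reduce to $a^2\sum_k\langle g_i,\psi_{mk}\rangle^2$, and then crucially using either Bessel's inequality ($\sum_k\langle g_i,\psi_{mk}\rangle^2\le\|g_i\|_{L^2}^2\le c_{\alpha,1}^2$) or the $L^1$ bound $|\langle g_i,\psi_{mk}\rangle|\le c_{\alpha,1}\|\psi_{mk}\|_{1}=c_{\alpha,1}2^{-m/2}\|\psi\|_1$; either way the sum over $k$ contributes a \emph{constant}, not $2^m$. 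Relatedly, your explanation of the interactive/non-interactive gap is backwards: the $2^{-j}$ gain per observation is present in the sequentially interactive bound; what the non-interactive lower bound additionally exploits is the product structure of the mixture across observations (a second-moment, $\chi^2$-of-product argument with $n^2z_\alpha^4$ in the exponent), which is unavailable for adaptive channels. Since you yourself flag this as "the step most prone to losing or gaining a spurious factor of $2^j$", the verdict is that you have gained one, and the proof is not complete without the Bessel (or $L^1$-norm) argument above.
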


In view of Remark~\ref{rem:alpha2}, the lower bound can further be bounded from below by $c'[n\alpha^2]^{-\frac{4s}{2s+1}}$, provided that $\alpha$ is bounded. Theorem~\ref{thm:SIlowerB} shows that the rate in Theorem~\ref{interactiveUB} is optimal, at least in the regime where $s=s'$, that is, $p\ge 2$, and up to log factors. Recall that in the argument following Theorem~\ref{LowBoNI} we have already established the parametric lower bound of order $(n\alpha^2)^{-1}$.


\section{Applications}\label{SEC:APPL}

Next, we discuss two common applications where estimation of the quadratic functional plays an important role: estimating more general integral functionals and goodness-of-fit testing.

\subsection{Integral functionals of the density}

Suppose we want to estimate other integral functionals $T(f) = \int \phi (f(x)) dx$ of the bounded density $f$, such as, for example, the entropy $\int f(x)\log(f(x))dx$. If $\phi : \R_+ \to \mathbb{R}$ is three times continuously differentiable, we can follow ideas of \citet{Birge95} \citep[see also][Section~5.3.1]{Gine16}, and perform a Taylor expansion of $\phi$ at a suitable preliminary estimator followed by successive estimation of the resulting linear and quadratic functionals. More specifically, let $\hat f_n$ be a preliminary estimator of $f$, based on a subset $X^{(1)}$ of the whole sample and corresponding sanitized data $Z^{(1)}$, and write
\begin{eqnarray}
\int_0^1 \phi(f) &=& \int_0^1 \left[ \phi(\hat f_n) + \phi'(\hat f_n) (f-\hat f_n) + \frac 12\phi''(\hat f_n) (f-\hat f_n)^2 \right] + G_n\notag\\
&=& \int_0^1 \left[ \phi(\hat f_n) - \phi'(\hat f_n) \hat f_n + \frac 12\phi''(\hat f_n) (\hat f_n)^2  \right] \notag\\
&&+ \int_0^1 f \cdot \left[\phi'(\hat f_n) - \phi''(\hat f_n) \hat f_n \right] + \frac 12 \int_0^1 f^2\cdot \phi''(\hat f_n) + G_n,\label{eq:PhiExpansion}
\end{eqnarray}
where $|G_n|\leq \frac 16 \|\phi'''\|_\infty \int |f - \hat f_n|^3$.
Now, it remains to plug in optimal estimators of the linear and quadratic integral functionals $f\mapsto\int_0^1 f\cdot \psi_1 $ and $f\mapsto \int_0^1 f^2\cdot \psi_2$, for known functions $\psi_1$ and $\psi_2$, constructed with the remaining data sample $X^{(2)}$.

First note that according to \citet{Rohde18}, the rate for $\alpha$-privately estimating the linear functional $f\mapsto \int_0^1 f \cdot \psi_1$ over a convex parameter space, is $(n\alpha^2)^{-1/2}$, provided that the function $x\mapsto \psi_1(x) := \phi'(\hat f_n(x)) - \phi''(\hat f_n(x)) \hat f_n(x)$ is bounded on $(0,1)$. Hence, estimating the linear term in the expansion \eqref{eq:PhiExpansion} will never dominate the rate.\footnote{In case of a functional like the entropy, $\phi(f) = f\log(f)$, where $\phi'$ is unbounded on $(0,1)$, one usually assumes that both $f$ and $\hat{f}_n$ are bounded from below by some positive constant. }

Next, for the preliminary estimator $\hat{f}_n$ based on sanitized data $Z^{(1)}$, let us consider the minimax adaptive estimator in \citet{Butucea19}, which has the property that, for privacy level $\alpha \in(0,1]$ and $r \geq 1$,
\begin{align*}
\sup_{f \in \P_s^{pq}} \E_{Q^*\mathbb P_f^n} \left[\int_0^1 |f - \hat f_n|^r \right] \lesssim
(\log n)^C\begin{cases}
(n \alpha^2)^{- \frac{rs}{2s +2}},  &s > \frac rp -1 ,\\
\left(\frac{n \alpha^2}{ \log (n \alpha^2)} \right)^{-\frac{r(s-1/p + 1/r)}{2(s-1/p) + 2}}, &\frac 1p < s \le \frac rp - 1,
\end{cases}
\end{align*}
where $Q^*$ is the optimal adaptive non-interactive channel of \citet{Butucea19} that generates $Z^{(1)}$ from original data $X^{(1)}$ and does not depend on knowledge of $s$. This non-interactive procedure is actually shown to be rate optimal even among all sequentially interactive privacy mechanisms. For simplicity, here we ignore logarithmic terms in all the rates and only consider the case $p\ge2$, which implies that only the first of the two regimes above occurs and that $s'=s$.

Had we done only a first order expansion instead of \eqref{eq:PhiExpansion}, then the remainder term would dominate and the resulting private estimator would converge at a rate of $(n\alpha^2)^{-\frac{1}{2}}\lor(n\alpha^2)^{-\frac{s}{s+1}}$.
In view of our results in Section~\ref{sec:SeqIntUpper}, however, the quadratic functional can be estimated at a rate of $(n\alpha^2)^{-\frac{1}{2}}\lor(n\alpha^2)^{-\frac{2s}{2s+1}}$ and the remainder term $G_n$ in \eqref{eq:PhiExpansion} converges at the rate $(n\alpha^2)^{-\frac{3}{2}\frac{s}{s+1}}$, both of which are always faster than the rate of the first order expansion. Thus, the expansion \eqref{eq:PhiExpansion} improves over the first order expansion. Furthermore, if $s\ge1/2$, then both, $G_n$ and the quadratic functional estimate converge at the parametric rate and a higher order expansion would not improve the overall rate any further. If, on the other hand, $s<1/2$, then we might be able to improve the rate further by considering a third order expansion.

However, if we restrict to non-interactive privacy mechanisms, then the quadratic functional can only be estimated at the rate $(n\alpha^2)^{-\frac{1}{2}}\lor(n\alpha^2)^{-\frac{4s}{4s+3}}$ (cf. Section~\ref{SEC:NONINTER}) and this is always worse than the rate of the remainder term $G_n$. Thus, further expansion of $\phi$ to fourth or higher order can not improve the rate in the non-interactive case.

Hence, in some cases, our rates for estimating the quadratic functional already determine the rates for the estimation of much more general integral functionals $T(f) = \int \phi(f(x))dx$ with three times continuously differentiable $\phi$. This is in contrast with the direct case when $X_1,...,X_n$ are observed, where both, the quadratic and the cubic functional can be estimated at the rate
$(n\alpha^2)^{-\frac{1}{2}}\lor(n\alpha^2)^{-\frac{4s}{4s+1}}$ and the remainder term $G_n$ converges at the rate $(n\alpha^2)^{-\frac{3s}{2s+1}}$. Thus, the second order expansion is always dominated by the remainder term (for $s<1/4$ the remainder term converges strictly slower) and a third order expansion will be more efficient in terms of rate.
Due to the inverse problem that local differential privacy introduces, the cubic term is not always necessary in the private setting.

\subsection{Goodness-of-fit tests}

The most frequent application of our results is goodness-of-fit testing for the underlying density $f$. Due to the regularizing properties of the $\mathbb{L}_2$ norm, testing rates are usually faster than estimation rates of $f$ (with pointwise or integrated risks). The nonparametric test problem writes $H_0: f \equiv f_0$ for fixed, given $f_0$ in $\bar\P_s^{pq}$, against the alternative
$$
H_1(f_0,\mathcal{C} \varphi_n): f \in \bar\P_s^{pq}, \quad \|f - f_0\|_2 \geq \mathcal{C} \varphi_n,
$$
for some constant $\mathcal{C}>0$ and sequence $\varphi_n$ of real numbers decreasing to 0.
In the context of local differential privacy, test procedures $\Delta_n$ will be defined as measurable functions of the sanitized sample $Z_1, \dots, Z_n$, which is generated from the privacy mechanism $Q\in\mathcal Q_\alpha\subseteq \mathcal Q_\alpha^{(SI)}$. The risk measure of a test procedure for a given privacy mechanism is defined by
$$
\mathcal{T}_n(Q,\Delta_n, \mathcal{C}\varphi_n) := Q \Pr_{f_0}^{n} (\Delta_n = 1)
+ \sup_{f \in H_1(f_0,\mathcal{C} \varphi_n)} Q \Pr_{f}^{n} (\Delta_n = 0).
$$
Let $\gamma$ belong to (0,1). We say that a test procedure $\Delta_n$ associated to a privacy mechanism $Q$ attains the testing rate $\varphi_n$ if, for a constant $\mathcal{C}>0$,
$$
\limsup_{n \to \infty} \mathcal{T}_n(Q,\Delta_n, \mathcal{C}\varphi_n) \leq \gamma.
$$
This rate is the minimax rate of testing among all $\alpha$-sequentially interactive procedures if, for some $0 < \mathcal{C}^* < \mathcal{C}$,
$$
\liminf_{n \to \infty} \inf_{Q \in \mathcal{Q}_\alpha} \inf_{\Delta_n} \mathcal{T}_n(Q,\Delta_n, \mathcal{C}^*\varphi_n) \geq \gamma >0.
$$
We distinguish the cases of non-interactive privacy mechanisms $\mathcal Q_\alpha = \mathcal Q_\alpha^{(NI)}$ and of sequentially interactive privacy mechanisms $\mathcal Q_\alpha=\mathcal{Q}_\alpha^{(SI)}$.

It is known in the direct case (when $X_1,...,X_n$ are observed) that the optimal test is based on the optimal estimator of the quadratic functional $\|f-f_0\|_2^2$. Instead of a plug-in procedure, the test statistic is based on optimal estimators of $D(f)=\|f\|_2^2$ and of the linear functional $L = \int_0^1 f_0 f$. We already mentioned that for bounded $f_0$ in $\bar\P_s^{pq}$, the linear functional $L$ can be estimated by $\hat L_n$ at rate $(n \alpha^2)^{-1/2}$ via an estimator based on a non-interactive privacy mechanism \citep[see][]{Rohde18}, therefore the rates will be driven by the estimator of the quadratic functional $D(f)$.\\
The test procedure $\Delta_n^{(NI)} = 1$, iff $\hat D_n^{(NI)} - 2 \hat L_n + \|f_0\|_2^2 > C t_n^{(NI)}$, where $\hat D_n^{(NI)}$ is the procedure in \eqref{hatD} with $2^J = (n \alpha^2)^{-2/(4s'+1)}$, attains the rate $\varphi_n^{(NI)}$, where
$$
t_n^{(NI)} = \varphi_n^{(NI)} = (n \alpha^2)^{- \frac{2s'}{4s' +3}} \cdot \log^{a +\frac 14}(n\alpha^2), \quad a>1.
$$
The test procedure $\Delta_n^{(SI)} = 1$, iff $\hat D_n^{(SI)} - 2 \hat L_n + \|f_0\|_2^2 > C t_n^{(SI)}$, where $\hat D_n^{(SI)}$ is the procedure in $(\ref{eq:DhatSeq})$, attains the rate $\varphi_n^{(SI)}$, where
$$
t_n^{(SI)} = \varphi_n^{(SI)} = (n \alpha^2)^{- \frac{2s'}{4s' + 2}} \cdot \log^{\frac{a}{2} +\frac 14}(n\alpha^2), \quad a>1.
$$
The upper bounds are simple consequences of the upper bounds for estimating the quadratic functional $D(f)$. It is also easy to deduce the corresponding lower bounds (without the $\log$ factors) from the proofs of the lower bounds on estimation. Indeed, in these proofs, the estimation risk is first reduced to the risk for testing and this is further bounded from below.

\citet{Lam20} have recently derived similar results for goodness-of-fit testing over spaces $B_s^{2\infty}$ in the special case of non-interactive privacy with identical privacy mechanisms on each sample $X_i$, $Q^{\times n}$. Their innovative method for establishing lower bounds is generalized here in order to take into account general non-interactive privacy mechanisms $\prod_{i=1}^{n} Q_i$, in order to achieve optimality over Besov $B_s^{p q}$, $s>0$, $p\geq 2$, $q\geq 1$, smoothness classes and optimality with respect to the privacy level $\alpha$ when it tends to 0.

The testing approach above has been followed by \cite{BerrettButucea} for goodness-of-fit testing of discrete distributions with separation defined by the $\mathbb{L}_2$ and $\mathbb{L}_1$ norms. In the case of discrete distributions, it has also been noticed that the analogous interactive privacy mechanism introduced here allows for faster rates of testing than any non-interactive privacy mechanism.


\section{Adaptation to the smoothness}
\label{sec:adaptation}

Notice that the estimators considered so far use the smoothness $s$ of the unknown underlying probability density in order to determine the optimal resolution level $J$, and this resolution level plays a role in both, the construction of the sanitized data and in the estimation procedure of the quadratic functional. In addition, our sequentially interactive procedure relied on an optimal truncation parameter $\tau$ that also depended on $s$. In this section we show how to aggregate procedures for different values of the resolution level $J$ and select the optimal $\hat J$ and the associated estimator $\hat D_{\hat J}$ in a data driven way by minimizing a penalized criterion. 

\subsection{Non-interactive setup}

Let us consider the sanitized samples $Z_{ijk}, j=-1,...,J_{max}-1, \, k=0,...,\lceil2^j-1\rceil$ in \eqref{eq:nonInterQ}. Recall that we use a fixed arbitrary constant $a>1$ and $\sigma = 4 + 2\sum_{j=1}^{\infty} \frac 1{j^{a}} $ in the construction of $Z'$s. We use the estimator $\hat{D}_n$ as defined in \eqref{hatD}, but to emphasize the dependence on $J$ in the definition, we now write $\hat D_J$ instead of $\hat D_n$, for some $J\le J_{max}$, and we suppress the dependence on $n$. 

For some constant $\mathcal{C}>0$, define
$$
pen^{(NI)}(J) = \mathcal{C} \frac{J^{2a} 2^{3J/2}}{n \alpha^2} \log (2^{4J+1}),
$$
where $J \in \mathcal{J} = \{1,2,\dots, J_{max}\}$ such that the largest value $J_{max}$ in $\mathcal{J}$ satisfies, for some $\kappa > 4(a +1) $, 
$$
\frac{2^{3J_{max}}}{n^2\alpha^4} \leq \log^{-\kappa}(n \alpha^2).
$$
If $\mathcal{C}>0$ is chosen sufficiently large, the penalty allows us to define the final purely data-driven estimator as follows:
$$
\hat D_n^{(NI)} := \max \left\{ \hat D_J - pen^{(NI)}(J) : J \in \mathcal{J}\right\}. 
$$
The proof of the following theorem is deferred to Section~C.1 of the supplement \citep{Butucea22supp}.

\begin{theorem}\label{thm:adaptivNI} Under the assumptions and notations of Theorem~\ref{noninteractiveUB}, the adaptive estimator $\hat D_n^{(NI)}$ associated to the penalty $pen^{(NI)}$ above is such that
$$
\sup_{f \in \bar{\P}_s^{pq}(L, M_2,M_3)} \E_{Q_f^{(NI)}}\left[ \left|\hat{D}^{(NI)}_n - D(f) \right|^2 \right]
\;\lesssim_{log} \; \mathfrak r_n^{(NI)}(\alpha, a, s'),
$$
for every $n\in\N$ and $\alpha\in(0, 1]$, with $n\alpha^2>1$. Here, $\lesssim_{log}$ indicates that polynomial factors in $\log(n\alpha^2)$ have been omitted. 
\end{theorem}

We proceed similarly to \cite{Laurent2005} in order to build the adaptive procedure in the non-interactive case. However, the $U$-statistic that we build is based on randomized versions $Z_{ijk}$ of $\psi_{jk}(X_i)$ and therefore we can decompose the estimator $\hat D_J$ of $D$ at each resolution level $J$ into terms that already appeared in \cite{Laurent2005} but also two additional terms due to the Laplace random variables $W_{ijk}$. Our proof is mainly dedicated to dealing with these additional terms. For example, standard concentration inequalities for $U$-statistics of order 2 in \cite{HouRey2003} do not apply to unbounded random variables and we tailor a proof using the coupling inequality in \cite{delaPena95}.

\subsection{Interactive setup}
Like the non-adaptive one, our interactive adaptive procedure also proceeds in two steps. First, half of the data providers generate sanitized samples in a non-interactive way as before and these are used to build preliminary estimators of the wavelet coefficients and of the underlying probability density. Then, the second part of the data providers use this information to generate sanitized samples that are subsequently used both for estimating the quadratic functional at an arbitrary resolution level $J$, as well as for estimating the appropriate penalty term. Indeed, the variance of our interactive non-adaptive procedure depends on the smoothness of the unknown density through its wavelet coefficients and we need to estimate this quantity in order to build a purely data-dependent penalty.

Let $J_{\max} = J_{\max}(n\alpha^2,B)$ be defined such that 
$$
\frac{2^{2 J_{\max}}}{n \alpha^2} \asymp \frac 1{\log^B(n \alpha^2)},
$$
for some large enough $B>0$. 
Our interactive adaptive procedure is defined as follows. First, the sample is divided into two equally sized parts, where we assume for simplicity that $n\in 2\N$. Based on the first sample, we then generate for each $j\in\{-1,0, 1,\dots ,J_{\max}-1\}$ and $k\in\{0,1,\dots, (1\lor2^{j})-1\}$ the random variables
$
Z_{ijk}^{(1)}
$
as given in \eqref{eq:nonInterQ} and then build $\hat{\beta}_{jk}$ as in \eqref{estcoeff}. Now, in the second step, the set $\{n/2+1,\dots, n\}$ is decomposed into $J_{\max}+1$ (approximately) equally sized parts $\mathcal{N}_j$, $j\in\{-1,0, 1,\dots ,J_{\max}-1\}$. For each $j$ and each individual $X_i$ with $i\in \mathcal{N}_j$, we generate 
$$
Z_i^{(2,j)}:=\pm \tau\frac{e^{\alpha}+1}{e^{\alpha}-1}\ \ \text{with probability} \ \frac{1}{2}\left(1\pm \frac{\Pi_{\tau}[\sum_{k=0}^{(1\lor2^{j})-1}\hat{\beta}_{jk}\psi_{jk}(X_i^{(2)})]}{\tau\frac{e^{\alpha}+1}{e^{\alpha}-1}}\right),
$$
where $\tau = \log^\kappa(n\alpha^2)$ for some large enough $\kappa=\kappa(a,B)>0$.
Define the estimator at resolution level $J$ as
$$
\hat{D}_J:=\sum_{j=-1}^{J-1}\frac{1}{\arrowvert \mathcal{N}_j\arrowvert}\sum_{i\in\mathcal{N}_j} Z_i^{(2,j)}.
$$
Next, define
$$
\widehat{pen}^2(J):={\frac{1}{n\alpha^2}\sum_{j=-1}^{J-1}\sigma_j^2\left(\frac{1}{\arrowvert \mathcal{N}_j\arrowvert}\sum_{i\in\mathcal{N}_j} Z_i^{(2,j)}\right)_+},
$$
where $\sigma_j$ is as in \eqref{eq:nonInterQ}. 
Then 
\begin{equation}\label{eq: Jhat}
\hat{J}:=\text{argmax}_{J\in\{1,\dots, J_{\max}\}}\left(\hat{D}_J-\widehat{pen}(J)\right).
\end{equation}
The final estimator is then $\hat{D}_n^{(SI)}=\hat{D}_{\hat{J}}$. The following theorem is proved in Section~C.2 of the supplement \citep{Butucea22supp}.

\begin{theorem}\label{thm:adaptSI}
Under the assumptions and notations of Theorem~\ref{interactiveUB}, the adaptive estimator $\hat D_n^{(SI)}$ associated to the data-driven $\hat{J}$ in \eqref{eq: Jhat} satisfies
\begin{align*}
\sup_{f\in\bar{\P}_s^{pq}(L,M)} \E_{Q_f^{(SI)}}\left[\left|\hat D_n^{(SI)} - D(f)\right|^2\right] \lesssim \mathfrak r_n^{(SI)}(\alpha, a, s'),
\end{align*}
up to logarithmic factors.
\end{theorem}


\section{Numerical results}
\label{sec:sim}
In this section we complement our theoretical findings about the private minimax convergence rates by an extensive simulation study to further investigate potential strengths and weaknesses of the non-interactive and sequentially interactive locally private estimation procedures suggested above. 

In our first round of numerical experiments we consider H\"older smooth data generating densities $f_s$. More specifically, let
$$
f_s(x) := (s+1)x^s,\quad x\in(0,1], s\in(0,1).
$$
Then $f_s$ belongs to the H\"older space
$$
C^s((0,1]) := \left\{ f\in \mathcal C((0,1]): \|f\|_\infty + \sup_{x\neq y, x,y\in(0,1]} \frac{|f(x)-f(y)|}{|x-y|^s} < \infty\right\}
$$
which is itself identical to $B_s^{\infty\infty}$ with equivalent norms \citep[cf.][Proposition~4.3.23]{Gine16}. Figure~\ref{fig:Holder} shows examples of $f_s$ for different smoothness parameters $s$. 
\begin{figure}[htbp]
\includegraphics[width=\textwidth]{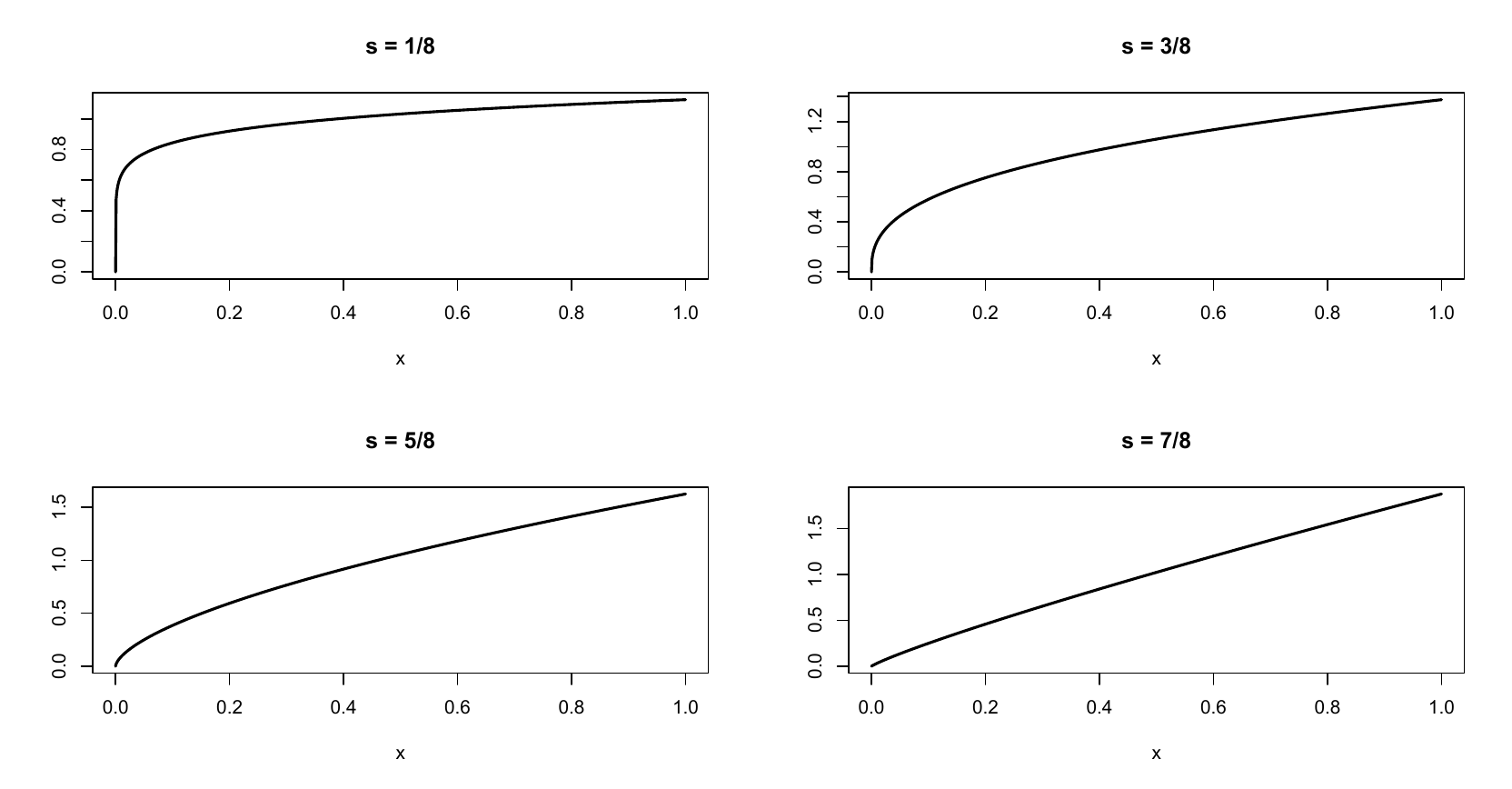} 
\caption{Data generating densities from $B_s^{\infty\infty}$ for different values of smoothness $s\in(0,1)$.}
\label{fig:Holder}
\end{figure}
We implement the non-interactive procedure in \eqref{hatD} and the private wavelet density estimator in \eqref{eq:densEst} both with $\sigma_{-1}=0$, $\sigma_j = (1\lor 2^{j/2})(2J+1)$, $j\ge0$, and $\sigma=1$. For the second step in the sequentially interactive procedure, we compute a more practical version that is, however, harder to analyze theoretically, by choosing $\tau = \|\hat{f}_J^{(1)}\|_\infty$. It is easy to see that with these modifications both procedures still satisfy the privacy constraint. In particular, we have $\psi_{-1,0}(x) = \phi(x) =1$, for all $x\in(0,1]$ and hence $Z_{i,-1,0} = 1$ and $\hat{\beta}_{i,-1,0}=1$, irrespective of the sensitive information $X_i$. In Figures~\ref{fig:HolderAlpha1} to \ref{fig:HolderAlpha100} we plot mean squared errors based on $100$ Montecarlo iterations of samples of size $n=1000$ each ($2n=1000$ in case of the two-step procedure), for different values of the tuning parameter $J$. The parameter $J$ determines the number of resolution levels to be included in the wavelet estimator, so a large value of $J$ results in an estimator that includes more details but also more Laplace noise, whereas a small $J$ leads to a larger bias. Notice that for $J=0$ both procedures disregard the original data $X_1,\dots, X_n$, the non-interactive procedure simply returns $\hat{D}_n=1$ and the sequentially interactive one essentially does the same but adds a little bit of random noise in the second step.

\begin{figure}[htbp]
\includegraphics[width=\textwidth]{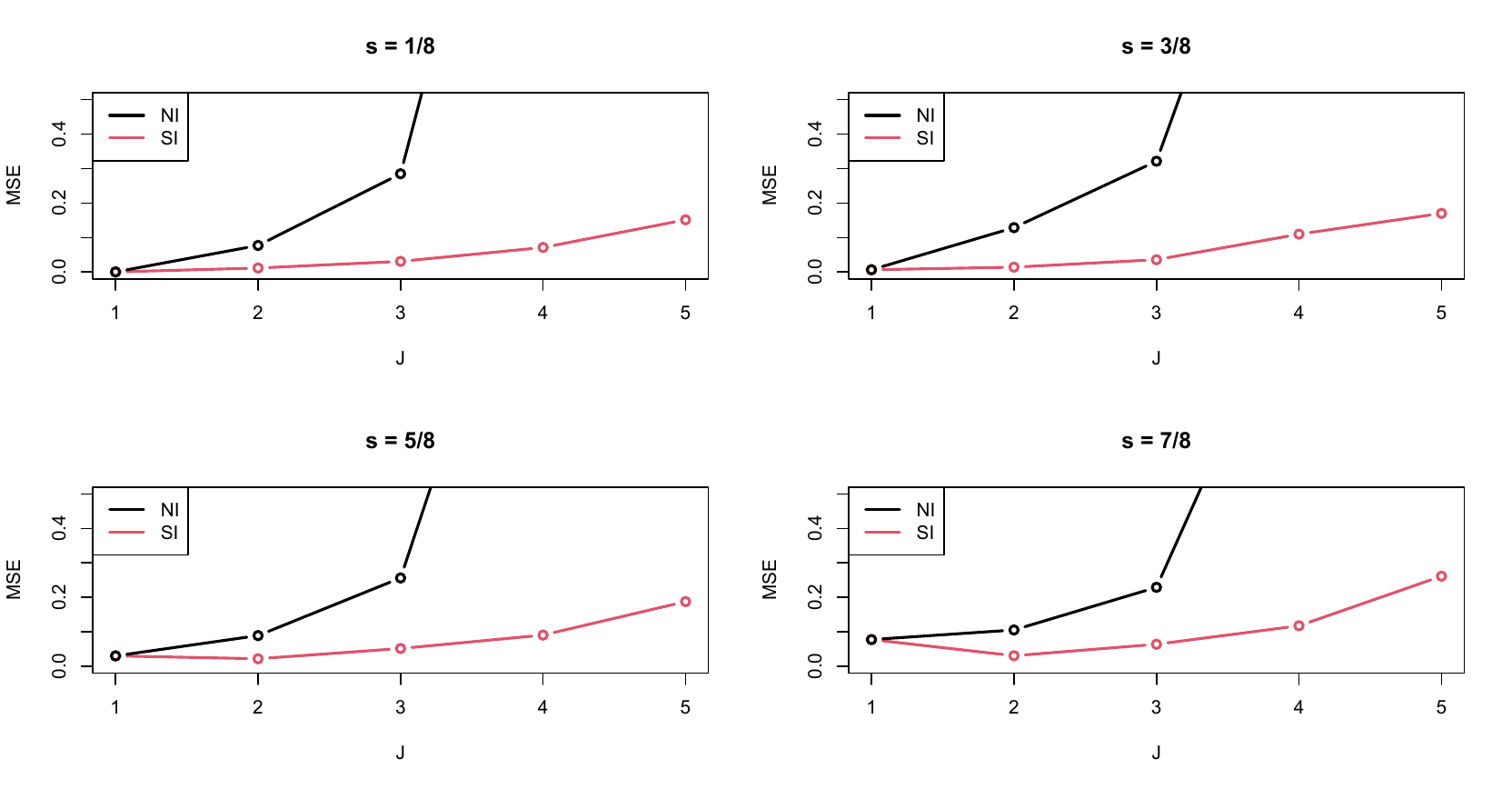} 
\caption{MSE of non-interactive (NI) and sequentially interactive (SI) procedure with privacy level $\alpha=1$, and true data generating distributions as in Figure~\ref{fig:Holder}.}
\label{fig:HolderAlpha1}
\end{figure}

\begin{figure}[htbp]
\includegraphics[width=\textwidth]{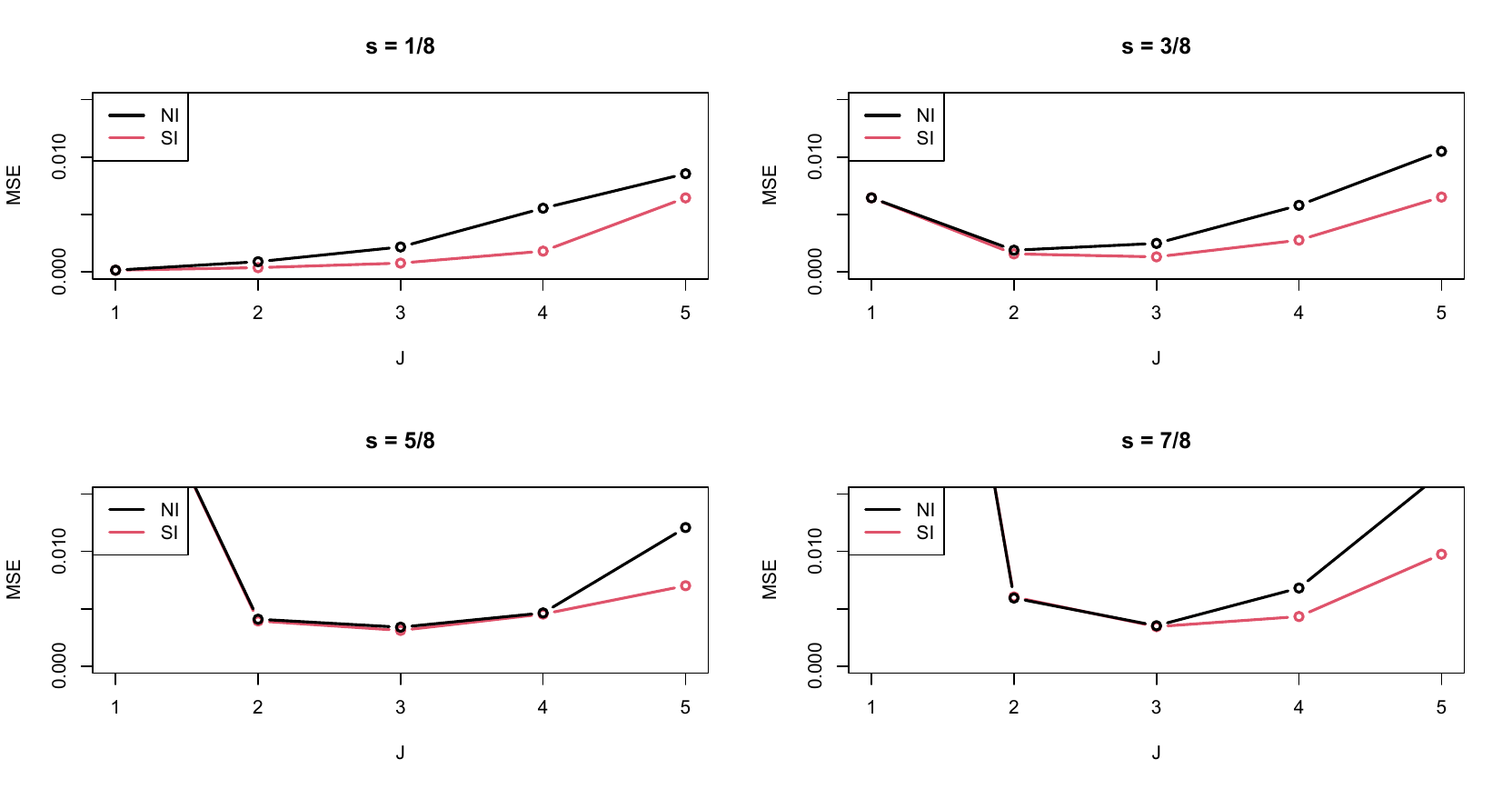} 
\caption{MSE of non-interactive (NI) and sequentially interactive (SI) procedure with privacy level $\alpha=10$, and true data generating distributions as in Figure~\ref{fig:Holder}.}
\label{fig:HolderAlpha10}
\end{figure}

\begin{figure}[htbp]
\includegraphics[width=\textwidth]{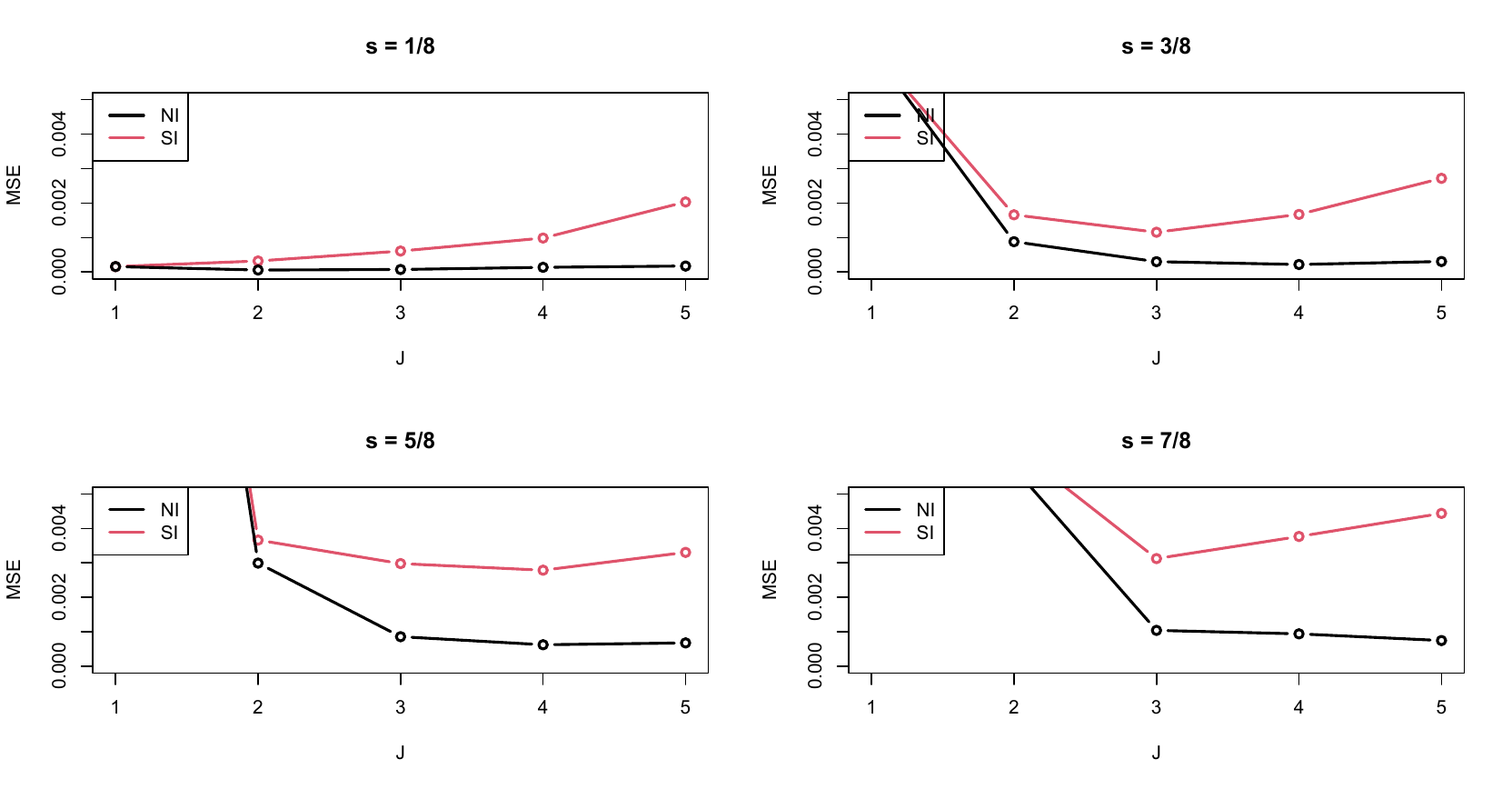} 
\caption{MSE of non-interactive (NI) and sequentially interactive (SI) procedure with privacy level $\alpha=100$, and true data generating distributions as in Figure~\ref{fig:Holder}.}
\label{fig:HolderAlpha100}
\end{figure}

We see that for a smaller privacy level of $\alpha=1$ (Figure~\ref{fig:HolderAlpha1}), the sequentially interactive procedure (SI) consistently outperforms the non-interactive (NI) one. However, for both methods, adding higher resolution levels to the estimator does not really pay off. Although we have a sample size of $n=1000$, still we are basically unable to pick up any signal under all the noise of the privacy mechanism, and simply assuming a uniform density (i.e., $J=0$ and $\hat{D}_n=1$) and suffering the resulting bias is acceptable compared to the higher variance that results from increasing $J$. There are two options for how to alleviate this problem. Either we further increase the sample size $n$, which gets challenging in terms of simulation runtime, or we can increase the privacy level $\alpha$. For $\alpha=10$ (Figure~\ref{fig:HolderAlpha10}), we observe a clear benefit of increasing the resolution $J$, except for the very non-smooth case $s=1/8$, which corresponds to a density that is close to uniform. However, at the same time the superiority of the SI mechanism is reduced and is lost altogether in case $\alpha=100$ (Figure~\ref{fig:HolderAlpha100}). Of course, this should not surprise us, because for large $\alpha$ the NI differentially private procedure is almost equivalent to the conventional U-statistics estimator based on direct observation of $X_i$ (the Laplace noise in \eqref{eq:nonInterQ} vanishes as $\alpha\to\infty$), in which case a sample splitting approach is clearly inferior. Moreover, notice that the noise added in the second step \eqref{eq:Z_i2} of the sequentially interactive mechanism does not disappear even as $\alpha\to\infty$.

From these observations we conclude that it is hard to see the superiority of the sequential mechanism in terms of convergence rate as $n\to\infty$ in a simulation, because in order to pick up enough local structure of the true density underneath all the added differentially private noise we need very large sample sizes. However, we also see that the sequential mechanism has an advantage when dealing with certain global features of the true density as in Figure~\ref{fig:Holder} ($s=7/8$). Therefore, we conduct a second round of simulations with very smooth but more structured beta-densities (Figure~\ref{fig:Beta}).

\begin{figure}[htbp]
\includegraphics[width=\textwidth]{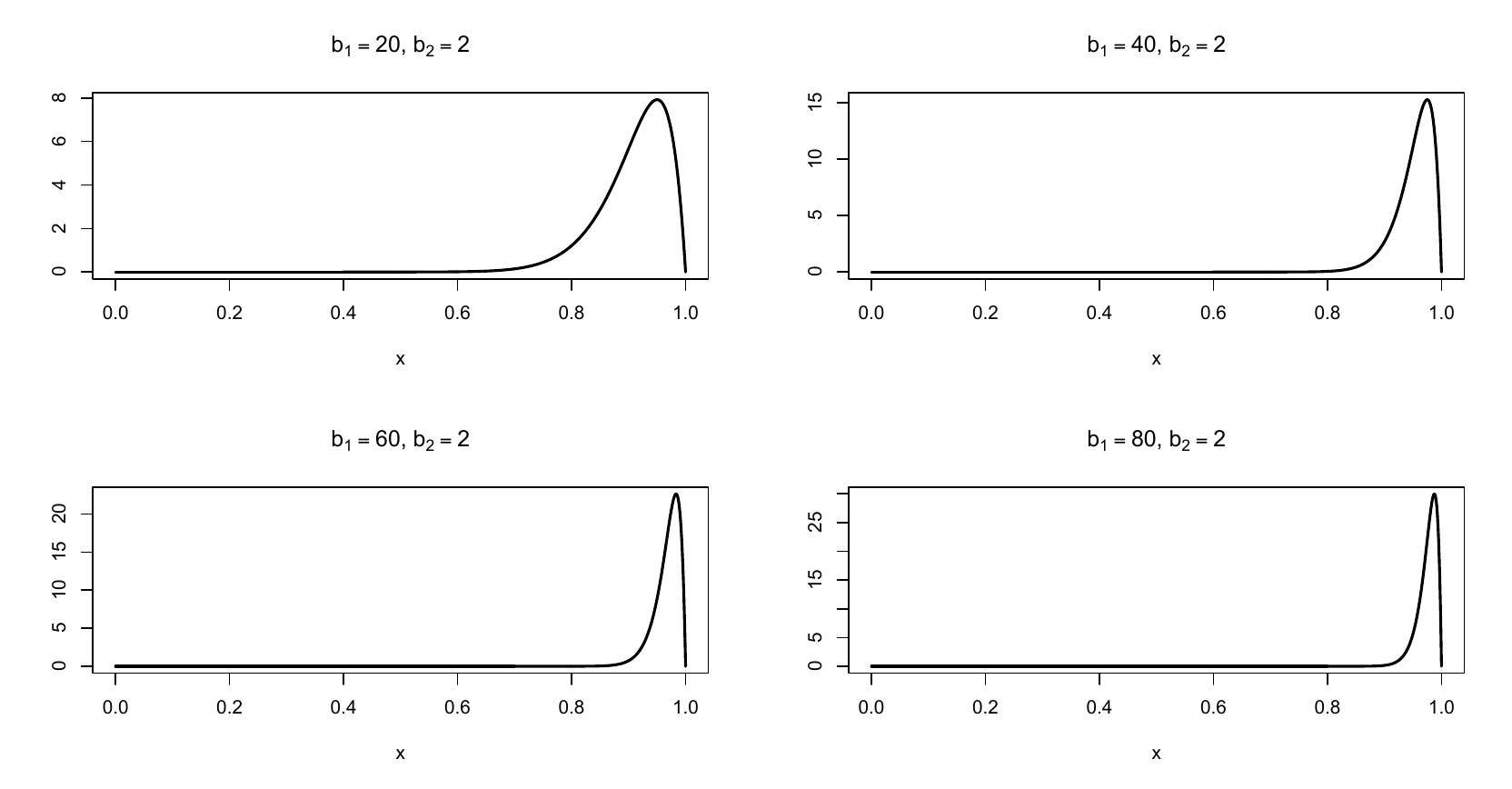} 
\caption{Beta($b_1,b_2$) densities.}
\label{fig:Beta}
\end{figure}

\begin{figure}[htbp]
\includegraphics[width=\textwidth]{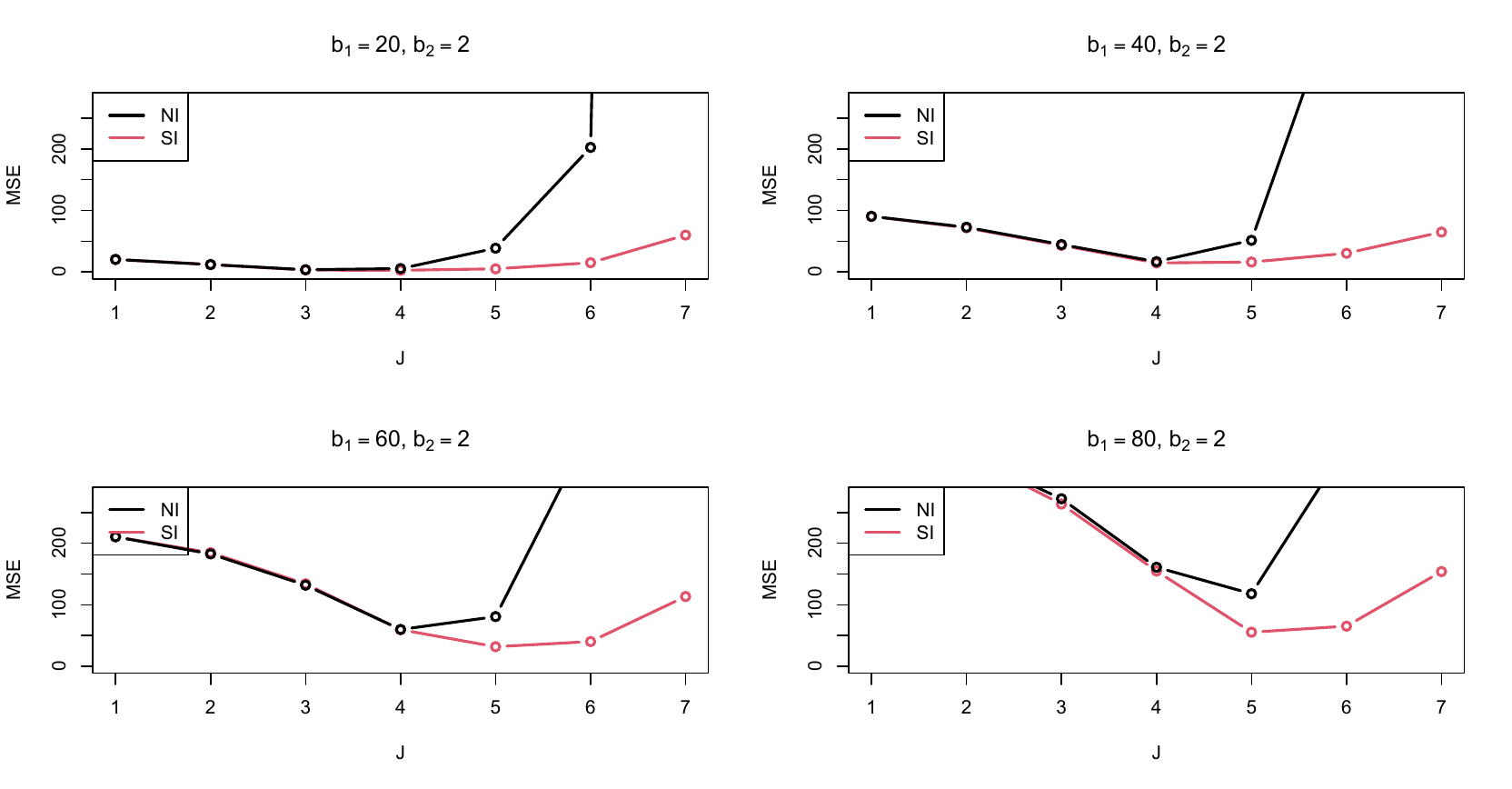} 
\caption{MSE of non-interactive (NI) and sequentially interactive (SI) procedure with privacy level $\alpha=1$, and true data generating distributions as in Figure~\ref{fig:Beta}.}
\label{fig:BetaAlpha1}
\end{figure}

For a privacy parameter of $\alpha=1$, in Figure~\ref{fig:BetaAlpha1} we see that both the NI as well as the SI mechanism can pick up some signal beneath the differentially private noise. Moreover, the SI procedure is never worse than the NI mechanism and achieves a smaller optimum at a value of $J$ that may also be different from the optimal $J$ of the $NI$ procedure.

Finally, we numerically investigate the role of the splitting ratio in the sequentially-interactive two-step procedure. By $n_1\le n$ we denote the number of observations that are used in the first step to compute the private wavelet density estimator \eqref{eq:densEst}. In Figure~\ref{fig:n1} we plot MSEs of the SI procedure as a function of $J$ and $\frac{n_1}{n}$. For reference, the black dashed line denotes the MSE of the NI procedure with optimally chosen $J$. Original data $X_i$ were generated from the Beta$(20,2)$-distribution. The nearly constant MSE curves for small values of $J$ can be explained by the fact that the bias of the SI procedure does not depend on the splitting ratio $\frac{n_1}{n}$ while at the same time the bias dominates the MSE for small $J$. However, the fraction of data points $\frac{n_1}{n}$ used in the first step of the SI procedure strongly influences the variance which dominates the MSE for larger values of $J$. We also see in Figure~\ref{fig:n1} that in order to beat the NI mechanism with the SI procedure, the choice of $J$ is not as critical ($J=2,3,4$ would do), provided that about two thirds of the data go into the first wavelet density estimation part of SI.

\begin{figure}[htbp]
\includegraphics[width=\textwidth]{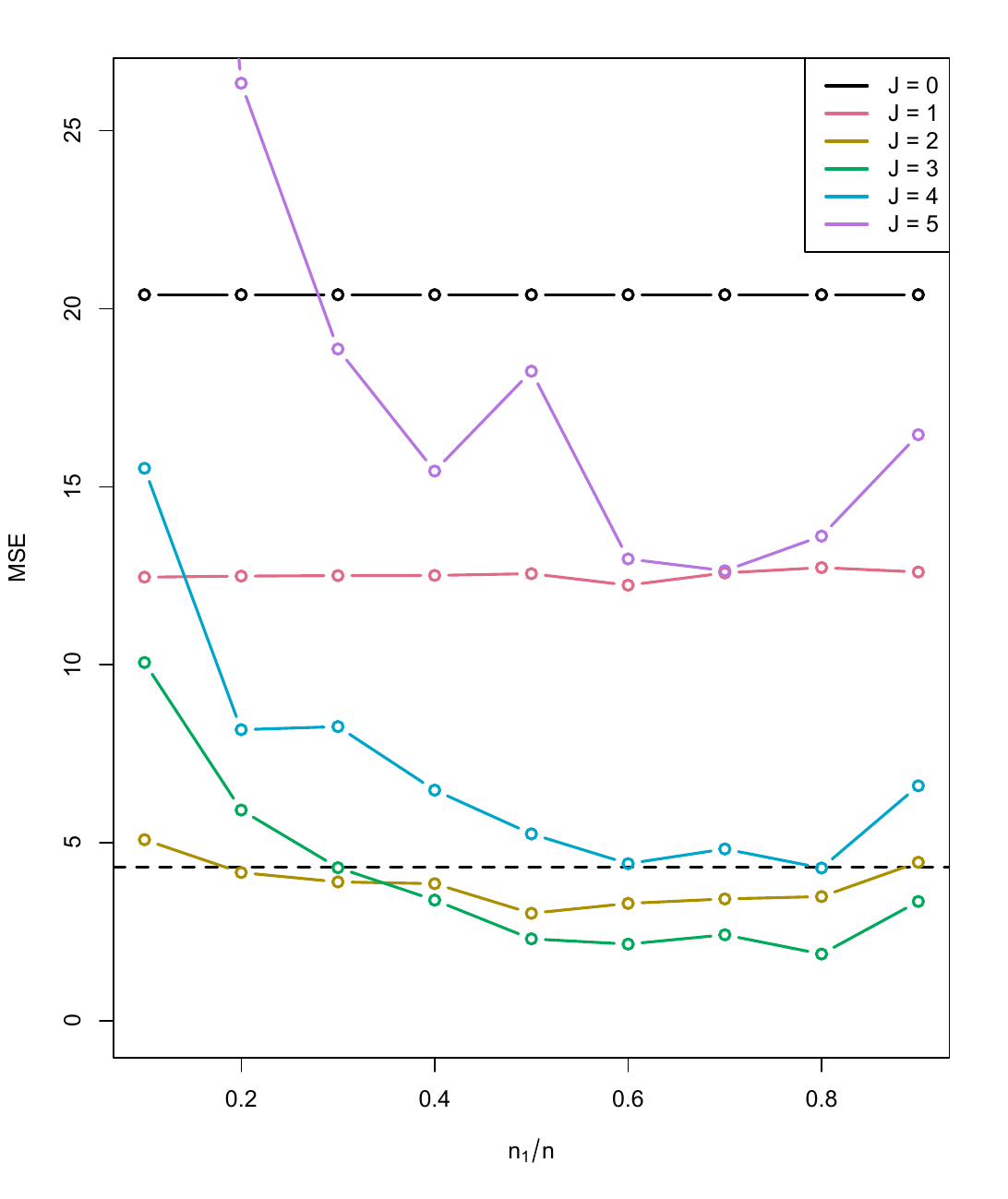} 
\caption{MSE of NI with optimal tuning $J=3$ (black dashed horizontal line) and of SI in dependence on $J$ and fraction $\frac{n_1}{n}$ of data points used in step one. Original private data were generated from the Beta$(20,2)$-distribution.}
\label{fig:n1}
\end{figure}

We conclude this numerical section by pointing out again that although the precise quantitive behavior of the convergence rates of the two local differentially private procedures (investigated in Sections~\ref{SEC:NONINTER} and \ref{SEC:SEQINTER}) can only be observed in very large samples, we still see clear benefits of the sequentially-interactive procedure over the simple non-interactive one also for smaller samples and when the privacy protection is strong and $\alpha$ is small.


\begin{supplement}[id=supp]
\stitle{Supplement to ``Interactive versus non-interactive locally differentially private estimation: Two elbows for the quadratic functional''}
\slink[url]{supplement.pdf}
\sdescription{The supplementary material contains all the proofs.}
\end{supplement}


\bibliographystyle{imsart-nameyear}
\bibliography{article}{}

\appendix

\section{Proofs of Section~3 (non-interactive protocols)}
\label{sec:App:NONINTER}
\subsection{Proof of Proposition~3.1}

It suffices to show that the one dimensional marginal channel satisfies (2.8). Note that the conditional density of $Z_1|X_1=x$ is given by
$$
q(z|x) = \prod_{j=-1}^{J-1} \prod_{k=0}^{(1\lor 2^j)-1} \frac{\alpha}{2 \sigma\cdot\sigma_j} \exp\left( - \frac{\alpha}{\sigma \cdot \sigma_j } |z_{jk} - \psi_{jk}( x )| \right), \quad z \in \r^{M},
$$
where $M = \sum_{j=-1}^{J-1} (1\lor 2^j)$. Therefore,
\begin{align*}
&\frac{q(z|x)}{q(z|x')} =
\prod_{j=-1}^{J-1} \prod_{k=0}^{(1\lor 2^j)-1}
\exp\left( - \frac{\alpha}{\sigma \cdot \sigma_j } ( |z_{jk} - \psi_{jk}( x )|
-  |z_{jk} - \psi_{jk}( x' )|) \right)\\
&\quad \leq  \exp \left( \frac \alpha{ \sigma}  | \phi(x) - \phi(x')|
 + \frac \alpha{ \sigma}\sum_{j=0}^{J-1} \frac { 2^{j/2}}{\sigma_j} \sum_{k=0}^{(1\lor 2^j)-1} |\psi(2^jx-k)- \psi(2^jx'-k)| \right)
\end{align*}
For any fixed $x \not = x'\in[0,1]$, $| \phi(x) - \phi(x')| \le 1$ and, for $j \geq 0$,
$$
\sum_{k=0}^{(1\lor 2^j)-1}  |\psi(2^j x - k) - \psi(2^j x' - k)|  \leq 2.
$$
Thus,
\begin{eqnarray*}
\frac{q(z|x)}{q(z|x')} & \leq & \exp \left( \frac{\alpha}{\sigma} 2\left[ 2 + \sum_{j=1}^{J-1} \frac {1}{j^a}\right]\right)
\leq \exp(\alpha),
\end{eqnarray*}
for $\sigma = 4 + 2\sum_{j=1}^{\infty} \frac {1}{j^a}$. \hfill\qed


\subsection{Proof of Theorem~3.2}

Fix $f\in\bar{\P}_s^{pq}(L, M_2, M_3)$. It first follows from Lemma~\ref{lemma:bias} that the bias of the estimator $\hat{D}_n$ is bounded as follows:
$$
D - \E_{Q_f^{(NI)}}[\hat{D}_n] = \sum_{j \ge J} \|\beta_{j\cdot}\|_2^2 \leq
\left\{
\begin{array}{ll}
C 2^{-2 J s'}, & 0 \leq s' < 1\\
C 2^{-J \frac {5}3}, & s' \geq 1
\end{array}
\right. , \quad \text{ with } C>0.
$$
We remark that, in order to use only the Haar basis in our construction, a rough control of the bias is performed for $s' \geq 1$. We have to embed our Besov body into a larger one with smoothness parameter smaller than 1 and we chose a somehow arbitrary value $5/6$ that is larger than $3/4$. This is enough to get the parametric rate in the regime where $s' > \frac 34$.

Next, we study the variance of the private estimator $\hat{D}_n$ with $J\ge2$. Note that
\begin{eqnarray*}
\hat{D}_n - D &=& \frac 1{n(n-1)} \sum_{i \not = h}^n \sum_{j=-1}^{J-1} \sum_{k=0}^{(1\lor 2^j)-1} (Z_{ijk}-\beta_{jk})(Z_{hjk}-\beta_{jk})\\
&& + \frac 2n \sum_{i=1}^n \sum_{j=-1}^{J-1} \sum_{k=0}^{(1\lor 2^j)-1} (Z_{ijk}-\beta_{jk}) \beta_{jk} = : T_1 + T_2.
\end{eqnarray*}
Now,
\begin{eqnarray*}
\Var(T_1) &=& \frac{4}{n^2(n-1)^2} \Var\left( \sum_{i<h}^n \sum_j \sum_k (Z_{ijk}-\beta_{jk}) (Z_{hjk}-\beta_{jk}) \right)\\
&=&  \frac{2}{n (n-1)} \Var\left( \sum_j \sum_k (Z_{1jk}-\beta_{jk}) (Z_{2jk}-\beta_{jk}) \right).
\end{eqnarray*}
We can decompose the previous sum into uncorrelated terms as follows:
\begin{align*}
\sum_j \sum_k &(Z_{1jk}-\beta_{jk}) (Z_{2jk}-\beta_{jk}) = \sum_j \sum_k (\psi_{jk}(X_1)-\beta_{jk}) (\psi_{jk}(X_2)-\beta_{jk})\\
 &+\sum_j \sum_k\sigma_j \frac{\sigma}{\alpha} \left[ (\psi_{jk}(X_1)-\beta_{jk}) W_{2jk}+  W_{1jk}(\psi_{jk}(X_2)-\beta_{jk}) \right]\\
&+ \sum_j \sum_k \sigma_j^2 \frac{\sigma^2}{\alpha^2} W_{1jk}W_{2jk}.
 \end{align*}
Therefore, using independence, the inequality $(a-b)^2\le 2a^2+2b^2$ and $\E(W_{ijk}^2)=2$, we get
\begin{align*}
\Var(T_1) &=  \frac{2}{n (n-1)} \left\{\E\left[\left( \sum_j \sum_k (\psi_{jk}(X_1)-\beta_{jk}) (\psi_{jk}(X_2)-\beta_{jk}) \right)^2\right]\right. \\
&\quad+ 2 \cdot \frac{\sigma^2}{\alpha^2} \E\left[ \left(  \sum_j  \sum_k \sigma_j(\psi_{jk}(X_1)-\beta_{jk}) W_{1jk}\right)^2\right] \\
&\quad+ \left.\frac{\sigma^4}{\alpha^4} \left[\E(W_{1jk}^2)\right]^2(2 + \sum_{j=1}^{J-1} \sigma_j^4 \cdot 2^{j} ) \right\}\\
&\leq  \frac{2}{n(n-1)}\left\{
	2\E\left[\left(\sum_j \sum_k \psi_{jk}(X_1) \psi_{jk}(X_2) \right)^2 \right]
		+ 2 \left(\sum_j \sum_k \beta_{jk}^2\right)^2 \right.\\
&\quad+ 8 \frac{\sigma^2}{\alpha^2} \E \left[ \left(\sum_j  \sum_k \sigma_j\psi_{jk} (X_1)  \right)^2\right]
		+ 8 \frac{\sigma^2}{\alpha^2} \sum_j  \sum_k \sigma_j^2\beta_{jk}^2\\
&\quad + 4 J^{4a+1} 2^{3J} \frac{\sigma^4}{\alpha^4} \Bigg\}.
\end{align*}
Now, we easily see that
$$
\left(\sum_j \sum_k \beta_{jk}^2\right)^2 = D^2 \le M_2^4,
$$
and that
$$
\sum_j  \sum_k \sigma_j^2\beta_{jk}^2 = \sum_{j=-1}^{J-1} j^{2a} 2^j \|\beta_{j\cdot}\|_2^2 \le J^{2a+1} 2^J.
$$
Furthermore, by Jensen's inequality and the fact that for $k_1\ne k_2$, the basis functions $\psi_{jk_1}$ and $\psi_{jk_2}$ have disjoint support, we get
\begin{align*}
&\E\left[\left(\sum_j \sum_k \psi_{jk}(X_1) \psi_{jk}(X_2) \right)^2\right]
\leq (J+1) \cdot \E \left[ \sum_j \left(\sum_k \psi_{jk}(X_1) \psi_{jk}(X_2) \right)^2 \right]\\
&\quad\leq (J+1) \sum_j \sum_k \left(\E \left[\psi_{jk}^2(X_1) \right]\right)^2\\
&\quad= (J+1) \sum_j \sum_k \left(\E \left[(1\lor 2^j)\mathds 1_{[0,1]}((1\lor 2^j)X_1-k) \right]\right)^2\\
&\quad= (J+1) \sum_j \left((1\lor 2^j) \E \left[\mathds 1_{[0,1]}((1\lor 2^j)X_1) \right]\right)^2 \\
&\quad\le (J+1) \sum_{j=-1}^{J-1}  (1\lor 2^j)^2 \leq (J+1) \cdot 2^{2J}.
\end{align*}
Moreover, similar but simpler considerations yield
\begin{align*}
&\E \left[ \left(\sum_j \sigma_j \sum_k\psi_{jk} (X_1)  \right)^2\right]
\leq  (J+1) \cdot\sum_j \sigma_j^2 \E\left[ \left( \sum_k\psi_{jk} (X_1)  \right)^2\right]\\
&\quad\le (J+1) \cdot \sum_{j=-1}^{J-1} \sigma_j^2 (1\vee 2^j)
\leq (J+1) \left(2 + \sum_{j=1}^{J-1} j^{2a} 2^{2j}\right)\\
&\quad\leq (J+1)^{2a+2} 2^{2J}.
\end{align*}
In conclusion, there exists some constant $C>0$, not depending on $f$, $n$ or $\alpha\le 1$, such that
$$
\Var(T_1) \leq C\left( \frac{J \cdot 2^{2J}}{n^2} + \frac{J^{2a+2}2^{2J}}{n^2\alpha^2} + \frac{J^{4a+1}2^{3J}}{n^2\alpha^4}\right) \le 3C \frac{J^{4a+1}2^{3J}}{n^2\alpha^4}.
$$
Next,
\begin{eqnarray*}
\Var(T_2) &=& \frac 4n \Var\left(\sum_j \sum_k (Z_{1jk}-\beta_{jk}) \beta_{jk} \right)\\
&=& \frac 4n \Var\left(\sum_j \sum_k (\psi_{jk}(X_1)-\beta_{jk}) \beta_{jk} \right)+\frac 4n \Var\left(\sum_j \sum_k \sigma_j \frac{\sigma}{\alpha} W_{1jk} \beta_{jk} \right)\\
&=& \frac 4n \Var\left(\sum_j \sum_k \psi_{jk}(X_1) \beta_{jk} \right)+\frac 4n  \frac{\sigma^2}{\alpha^2} \sum_j \sigma_j^2 \sum_k 2 \beta_{jk}^2\\
&=& \frac 4n \left(\int_0^1 (P_Jf)^2\cdot f - \left(\int_0^1 (P_Jf) \cdot f\right)^2 \right) + \frac{8 \sigma^2 }{n \alpha^2} \sum_{j=-1}^{J-1} \sigma_j^2 \|\beta_{j \cdot}\|_2^2,
\end{eqnarray*}
where $P_Jf = \sum_{j=-1}^{J-1}\sum_k \beta_{jk}\psi_{jk}= \sum_{m=0}^{2^J-1}\alpha_{Jm}\phi_{Jm}$ is the projection of $f$ onto the linear space $V_J = \s\{\phi_{Jm} : m=0,\dots, 2^J-1\}$, $\phi_{Jm} = 2^{J/2}\phi(2^Jx-m)$ and $\alpha_{Jm} = \langle f,\phi_{Jm}\rangle$. Because of the special structure of the Haar basis functions $\phi_{Jm}$ we have $(P_Jf)^2\in V_J$ and therefore $(P_Jf)^2\perp (f-P_Jf)\in V_J^\perp$, so that we obtain
\begin{align*}
\int_0^1 (P_Jf)^2\cdot f - \left(\int_0^1 (P_Jf)\cdot f\right)^2 &\leq\int_0^1 (P_Jf)^3
 =
\int_0^1 \left(\sum_k \alpha_{Jk} \phi_{Jk}(x)\right)^3 dx\\
& = \int_0^1 \sum_k \alpha_{Jk}^3 \phi_{Jk}^3(x) dx = \sum_k 2^{J/2} \alpha_{Jk}^3 \\
& = \sum_k 2^{J/2} \left( \int_0^1 f\cdot \phi_{Jk}\right)^3.
\end{align*}
But Jensen's inequality yields
\begin{align*}
\sum_k 2^{J/2}&\left|2^{-J/2}\int_0^1 f\cdot2^{J/2} \phi_{Jk}\right|^3
\le
\sum_k 2^{J/2}2^{-3J/2}\int_0^1 f^3\cdot 2^{J/2}\phi_{Jk}\\
&=
\int_0^1 f^3(x)\cdot \sum_{k=0}^{2^J-1}\phi(2^Jx-k)dx = \int_0^1f^3 \le M_3^3.
\end{align*}

On the other hand, recall that for the bias part we already showed that $\|\beta_{j \cdot }\|_2 \lesssim 2^{-js'} \cdot \mathds 1_{\{0 < s'< 1\}} + 2^{- \frac 56 j} \cdot \mathds 1_{\{s' \geq 1\}}$ where $s' = s - \left( \frac 1p - \frac 12\right)_+$. Thus
\begin{eqnarray*}
\sum_{j=-1}^{J-1} \sigma_j^2 \|\beta_{j \cdot}\|_2^2 &\lesssim & 2 +
\sum_{j=1}^{J-1} j^{2a} \left[2^{j(1-2s')} \cdot \mathds 1_{\{0 < s'< 1\}} + 2^{j (1- \frac 53)} \cdot \mathds 1_{\{s' \geq 1\}}\right]\\
&\lesssim & 1 \vee (J^{2a+1} \cdot 2^{J(1-2s')}).
\end{eqnarray*}

Thus, for some constant $C>0$,
$$
\Var(T_2) \leq \frac{4 M_3}{n} + \frac{C}{n \alpha^2} \cdot \left\{ 1 \vee (J^{2a+1} \cdot 2^{J(1-2s')}) \right\}.
$$
Summing up the previous bounds, we get
\begin{align*}
\E_{Q_f^{(NI)}}[(\hat{D}_n - D)^2] &\lesssim  2^{-4Js'} \mathds 1_{\{0 < s'< 1\}}  + 2^{-J \frac{10}3} \mathds 1_{\{s' \geq 1\}} \\
&\quad+ \frac{J^{4a +1}2^{3J}}{n^2\alpha^4} + \frac{1}{n} + \frac{1 \vee (J^{2a+1} \cdot 2^{J(1-2s')})}{n \alpha^2}.
\end{align*}
With our choice of $J$ the result of Theorem~3.2 follows, because
\begin{align*}
&\frac{1 \vee (J^{2a+1} \cdot 2^{J(1-2s')})}{n \alpha^2}
=
\frac{1 \vee \left([\frac{2}{4s'+3}\log_2(n\alpha^2)]^{2a+1} \cdot (n\alpha^2)^{\frac{2(1-2s')}{4s'+3}}\right)}{n \alpha^2}\\
&\quad\le
\frac{1}{n\alpha^2} \vee \left(\left[\frac{2\log 2}{4s'+3}\log(n\alpha^2)\right]^{4a+1} \cdot (n\alpha^2)^{\frac{-8s'}{4s'+3}}\right).
\end{align*}\hfill\qed

\subsection{Proof of Theorem~3.3}

Fix a channel $Q\in\mathcal Q_\alpha^{(NI)}$ with marginal conditional densities $q_i(z_i|x_i)$, $i=1,\dots, n$ with respect to some reference probability measure $\mu_i$ on $\mathcal{Z}_i$, as in Lemma~\ref{lemma:Qdensities}, that is, $e^{-\alpha}\le q_i(z_i|x) \le e^\alpha$ and, in particular, $q_i(z_i|x) \le e^{2\alpha} q_i(z_i|x')$, for all $z_i\in\mathcal Z_i$ and all $x,x'\in[0,1]$.
The lines of proof are similar to those of \cite{Lam20} that we generalize in order to: a) take into account possibly different mechanisms $q_i$ for each $i$, b) consider Besov  smooth densities belonging to $B_s^{pq}$ with $s>0$, $p\geq 2$, $q\geq 1$, instead of $B_s^{2\infty}$ and c) get the (nearly) optimal dependence with respect to $\alpha$ when it tends to 0.

Let $f_0 = \mathds 1_{[0,1]}$ and denote by $g_{0,i} (z_i) = \int_0^1 q_i(z_i|x) dx\geq e^{-\alpha}$.
For any $i=1,\dots, n$, define the bounded linear operator $K_i: L_2([0,1])\rightarrow L_2(\mathcal{Z}_i,d\mu_i)$ by
$$
K_i f = \int_0^1 q_i(\cdot |x) f(x) \frac{dx}{\sqrt{g_{0,i}(\cdot)}}, \quad f \in \mathbb{L}_2([0,1]).
$$
Then with $K_i^\star$ denoting its adjoint, the operator $K_i^\star K_i$ is a symmetric integral operator with kernel $F_i(x,y)=\int q_i(z_i|x) q_i(z_i|y) / g_{0,i}(z_i) d\mu_i(z_i)$:
$$
K_i^\star K_i f(\cdot) = \int_{\mathcal{Z}_i}q_i(z_i|\cdot) \int_0^1  q_i(z_i|y) f(y) dy\frac{d\mu_i(z_i)}{g_{0,i}(z_i)} = \int_0^1 F_i(\cdot ,y) f(y) dy
$$
by Fubini's theorem. Next, let us note that $f_0$ is an eigenfunction of $K_i^\star K_i$, associated to the eigenvalue $\lambda_{0,i}=1$, for all $i$ from 1 to $n$:
$$
K_i^\star K_i f_0(x) = \int_{\mathcal{Z}_i}q_i(z_i\arrowvert x)\int_0^1q_i(z_i\arrowvert y)dy \frac{d \mu_i(z_i)}{g_{0,i}(z_i)}=\int_{\mathcal{Z}_i} q_i(z_i|x) d\mu_i(z_i) = 1
$$
for all $x \in [0,1]$. Now, define the operator
$$
K:=\frac{1}{n}\sum_{i=1}^nK_i^\star K_i.
$$
It is again symmetric and positive semidefinite and has the eigenfunction $w_0=f_0$ associated to the eigenvalue $\lambda_0=1$. It is also an integral operator with kernel $F(x,y)=n^{-1}\sum_{i=1}^nF_i(x,y)$. Recall the Haar wavelet functions $(\psi_{jk})$ of Section~2.1 and define
$$
W_m=\textrm{span}\big\{ \psi_{mk}: k=0,...,2^m-1\big\}
$$
as the linear subspace spanned by the orthonormal family consisting of $(\psi_{mk})_{k=0,...,2^m-1}$. Denote by $w_{1},...,w_{2^m}$ the  eigenfunctions of $K$ as an operator on the linear $L_2([0,1])$-subspace $W_m$, satisfying $\|w_{k}\|_{L_2}=1$ and $\int_0^1 w_{k}(x)dx=0$ since they are orthogonal to $f_0=\mathds 1_{[0,1]}$. Moreover, we write $\lambda_{1}^2,...,\lambda_{2^m}^2$ for the corresponding eigenvalues, respectively. Note that they are non-negative.

From now on, we denote by $z_\alpha  = e^{2\alpha} - e^{-2\alpha} \leq e^2$ for $\alpha$ in (0,1] and by
$$
\lambda_{k,\alpha,m} = ( \frac{ \lambda_k}{z_\alpha}) \vee 2^{-m/2} \geq 2^{-m/2}.
$$
Define the functions
$$
f_{\nu}(x)  = f_0(x) + 2^{-m(s + 1)} \delta \sum_{j=1}^{2^m} \nu_j \frac 1{\lambda_{j,\alpha,m} } \cdot w_{j}(x),\ \ x\in[0,1],
$$
where $\nu_j \in\{-1,1\}$, for $j=1,...,2^m$ and $\delta = \delta_m>0$ is to be specified later. By a slight abuse of notation, we identify $f_0$ with $f_\nu$ with $\nu  = (0,...,0)$. Lemma~\ref{lemma:alternatives} shows that for the overwhelming part of possible vectors $\nu$, $f_{\nu}$ is a density, belongs to the right Besov space and the corresponding quadratic functional $D(f_{\nu})$ is sufficiently far away from $D(f_0)$.


We choose the integer number $m$ such that:
$$
n z^2_{ \alpha}  \asymp 2^{2ms +3m/2}.
$$

Let us denote by $g_{\nu,i}$ the function $g_{\nu,i}(z_i) = \int_0^1 q_i(z_i| x)f_\nu(x)dx$, $z_i\in\mathcal Z_i$, and see that
$$
g_{\nu,i}(z_i) = g_{0,i}(z_i) + 2^{-m(s+\frac 12)} \delta \sum_{k=1}^{2^m} \nu_k \frac 1{\lambda_{k,\alpha,m}} \cdot \int_0^1 q_i(z_i|x) w_{k}(x) dx.
$$
Classical results allow us to reduce the lower bounds for estimating $f$ to testing between the probability measures
\begin{equation}\label{eq:Q0n}
d Q_{0,n}(z_1,...,z_n) := \prod_{i=1}^n g_{0,i}(z_i)d\mu_i(z_i)
\end{equation} 
(where, for $\mu:=\bigotimes_{i=1}^n \mu_i$, $\prod_{i=1}^n g_{0,i}(z_i)$ is also the $\mu$-density of the product measure $Q\Pr_{f_0}^n = \bigotimes_{i=1}^n (Q_i\Pr_{f_0})$) and the averaged alternative
\begin{equation}\label{eq:Qn}
d Q_n(z_1,...,z_n) := \E_\nu \left[ \prod_{i=1}^n {g_{\nu,i}}(z_i) d\mu_i(z_i)\right] ,
\end{equation}
where $\E_\nu$ stands for expectation over i.i.d. Rademacher random variables $\nu_k$.
Indeed, using Lemma~\ref{lemma:alternatives}, we first reduce the maximal risk over $\bar{\P}_s^{pq}$ to the maximal risk over the subfamily of pdf's $\{ f_\nu: \nu = 0 \text{ or }\nu \in A_\gamma\}\subseteq \bar{\P}_s^{pq}$ and then use the Markov inequality with $\Delta = \delta^2\cdot 2^{-2ms}$ to get
\begin{align}
&\inf_{Q \in {\mathcal{Q}}_\alpha^{(NI)}} \inf_{\hat D_n} \sup_{f \in \bar{\mathcal{P}}_s^{pq}(L,M)} \E_{Q\Pr_{f}^n} \left[ |\hat D_n - D(f)|^2\right] \nonumber \\
&\quad \geq \inf_{Q \in {\mathcal{Q}}_\alpha^{(NI)}} \inf_{\hat D_n} \sup_{ \nu\in A_\gamma\cup\{0\}} \left(\frac \Delta2 \right)^2 Q\Pr_{f_\nu}^n \left(|\hat D_n - D(f_\nu)| \geq \frac \Delta2 \right)\nonumber  \\
&\quad \geq  \left(\frac \Delta2 \right)^2 \inf_{Q \in {\mathcal{Q}}_\alpha^{(NI)}} \inf_{\hat D_n}
\max \Bigg\{ Q\Pr_{f_0}^n\left(|\hat D_n - D(f_0)|\geq \frac \Delta2\right) ,  \nonumber \\
&\hspace{2cm} \E_\nu \left[ \mathds 1_{A_\gamma}(\nu)  Q\Pr_{f_\nu}^n \left(|\hat D_n - D(f_\nu)|\geq \frac \Delta2\right) \right] \Bigg\} \nonumber  \\
&\quad\geq  \left(\frac \Delta2 \right)^2 \inf_{Q \in {\mathcal{Q}}_\alpha^{(NI)}} \inf_{\hat D_n}
\max \Bigg\{ Q\Pr_{f_0}^n(B) , \notag \\
&\hspace{2cm}  \E_\nu \left[ \mathds 1_{A_\gamma}(\nu) \E_{Q \Pr_{f_0}^n} \left(\frac{d(Q\Pr_{f_\nu}^n)}{d(Q\Pr_{f_0}^n)} \cdot \mathds 1_{|\hat D_n - D(f_\nu)|\geq \frac \Delta2} \right) \right] \Bigg\} , \label{term:ave}
\end{align}
where we denote by $B$ the event $\{ |\hat D_n - D(f_0)|\geq \frac \Delta2 \}$. Note that, because of $|D(f_\nu) - D(f_0)| \geq \Delta$ (cf. Lemma~\ref{lemma:alternatives}.(iii)), the complementary event of $B$, $\bar B$, implies that $|\hat D_n - D(f_\nu)|\geq \frac \Delta2$. Thus, for any $\tau\in(0,1)$ we can further bound from below the term in (\ref{term:ave}) by
\begin{align*}
&\E_{Q \Pr_{f_0}^n} \left(\E_\nu \left[ \mathds 1_{A_\gamma}(\nu) \frac{d(Q\Pr_{f_\nu}^n)}{d(Q\Pr_{f_0}^n)} \cdot \mathds 1_{\bar B} \right] \right)
=\E_{Q_{0,n}} \left(\E_\nu \left[ \mathds 1_{A_\gamma}(\nu) \frac{d(Q\Pr_{f_\nu}^n)}{dQ_{0,n}} \right] \cdot\mathds 1_{\bar B} \right) \\
&\quad =  \E_{Q_{0,n}} \left( \left[ \frac{dQ_n}{dQ_{0,n}} - \E_\nu \left( \mathds 1_{\bar{A}_\gamma}(\nu) \frac{d(Q\Pr_{f_\nu}^n)}{dQ_{0,n}}\right)\right] \cdot \mathds 1_{\bar B} \right)  \\
&\quad \geq  \E_{Q_{0,n}} \left( \frac{dQ_n}{dQ_{0,n}}  \cdot \mathds 1_{\bar B} \right)
- \E_\nu \left[ \mathds 1_{\bar{A}_\gamma}(\nu) \E_{Q_{0,n}} \frac {d(Q\Pr_{f_\nu}^n)}{dQ_{0,n}} \right] \\
&\quad \geq  \E_{Q_{0,n}} \left(  \tau \cdot \mathds 1_{\frac{dQ_n}{dQ_{0,n}}\geq \tau } \cdot \mathds 1_{\bar B} \right)-\gamma \\
&\quad \geq  \tau \cdot \left( Q_{0,n} \left( \frac{dQ_n}{dQ_{0,n}}\geq \tau \right) - Q_{0,n} (B) \right) -\gamma,
\end{align*}
where we have used that $\E_\nu[\mathds 1_{\bar{A}_\gamma}(\nu)] \le \gamma$, by Lemma~\ref{lemma:alternatives}.
If there exist $\epsilon,\tau \in (0,1)$ and $n_0\in\N$, all three not depending on $n$ and $\alpha$, such that, whenever $n z_\alpha^2\ge n_0$, we have
\begin{equation}\label{eq:AN}
Q_{0,n} \left( \frac{dQ_n}{dQ_{0,n}}\geq \tau \right) \geq 1-\epsilon, 
\end{equation}
then we conclude the proof by the following lower bound for the minimax risk
\begin{eqnarray*}
\mathcal{M}_{n,\alpha}^{(NI)}(\bar{\mathcal{P}}_s^{pq}(L,M) )&\geq &
\left(\frac \Delta2 \right)^2 \Bigg(\inf_{Q \in {\mathcal{Q}}_\alpha^{(NI)}} \inf_{\hat D_n}
\max \Bigg\{ Q_{0,n}(B) , \\\
&&  +\tau \cdot \left( Q_{0,n} \left( \frac{dQ_n}{dQ_{0,n}}\geq \tau \right) - Q_{0,n} (B) \right) \Bigg\} - \gamma \Bigg) ,\\
&\geq &
\left(\frac \Delta2 \right)^2 \left(\inf_{p \in (0,1)} \max\{p, \tau (1-\epsilon - p)\} -\gamma\right)\\
&=&  \left(\frac \Delta2 \right)^2 \cdot \left(\frac \tau{1+\tau} (1-\epsilon) - \gamma\right),
\end{eqnarray*}
provided that $n z_\alpha^2\ge n_0$.
Thus, for an appropriate choice of $\gamma\in(0,1)$ and with $\delta=\delta_m$ as in Lemma~\ref{lemma:alternatives}, $\Delta^2 = \delta^4 2^{-4ms} \asymp (n z^2_{\alpha} )^{-8s/(4s+3)} [\log(n z_\alpha^2)]^{-2}$ is the desired rate.

A sufficient condition for \eqref{eq:AN} is that for $n z_\alpha^2\ge n_0$,
$$
\chi^2 (Q_n, Q_{0,n}):=\int \left(\frac{dQ_n}{dQ_{0,n}}-1\right)^2 dQ_{0,n} \leq (1-\tau)^4.
$$
Indeed, $Q_{0,n}(\frac{dQ_n}{dQ_{0,n}} \geq \tau) \geq 1 - \frac 1{(1-\tau)^2} \int (\frac{dQ_n}{dQ_{0,n}}-1)^2 dQ_{0,n}\geq 1 - (1-\tau)^2$ and this checks (\ref{eq:AN}) with $\epsilon = (1-\tau)^2$.

We have
\begin{align*}
\chi^2 &(Q_n, Q_{0,n}) =  -1 + \E_{Q_{0,n}}\left[ \Big(\frac{dQ_n}{dQ_{0,n}}\Big)^2 \right]\\
&=  -1 + \E_{Q_{0,n}}\left( \left[ \E_{\nu}\prod_{i=1}^n \left( 1 + 2^{-m(s+1)} \delta \sum_{k=1}^{2^m} \frac {\nu_k }{\lambda_{k,\alpha,m}} \cdot \frac{\langle q_i(Z_i|\cdot ) ,w_{k}\rangle }{g_{0,i}(Z_i)} \right) \right]^2\right)\\
& = -1 + \E_{Q_{0,n}} \left[ \E_{\nu,\nu'}\prod_{i=1}^n \left( 1 + 2^{-m(s+1)} \delta \sum_{k=1}^{2^m} \frac {\nu_k }{\lambda_{k,\alpha,m}} \cdot \frac{\langle q_i(Z_i|\cdot ), w_{k}\rangle }{g_{0,i}(Z_i)} \right) \cdot\right. \\
&\quad \left.  \left( 1 + 2^{-m(s+1)} \delta \sum_{k=1}^{2^m} \frac {\nu'_k }{\lambda_{k,\alpha,m}} \cdot \frac{\langle q_i(Z_i|\cdot ), w_{k}\rangle }{g_{0,i}(Z_i)} \right) \right]\\
&= -1 + \E_{\nu,\nu'}\prod_{i=1}^n \left( 1+ \E_{Q_{0,n}}
2^{-2m(s+1)}\delta^2 \sum_{k_1,k_2=1}^{2^m}\frac {\nu_{k_1} }{\lambda_{k_1,\alpha,m}} \cdot \frac{\langle q_i(Z_i|\cdot ), w_{k_1}\rangle }{g_{0,i}(Z_i)}\cdot\right. \\
&\quad \left.  \frac {\nu'_{k_2} }{\lambda_{k_2,\alpha,m}} \cdot \frac{\langle q_i(Z_i|\cdot ), w_{k_2}\rangle }{g_{0,i}(Z_i)}  \right),
\end{align*}
where $\nu,\nu'$ are independent copies of vectors with i.i.d. Rademacher entries and we used that
$$
\E_{Q_{0,n}}\left( \frac{\langle q_i(Z_i|\cdot ) ,w_{k}\rangle }{g_{0,i}(Z_i)} \right)
= \int_{\mathcal{Z}_i} \langle q_i(z_i|\cdot ) ,w_{k}\rangle d\mu_i(z_i)
= \int_0^1 w_{k}(x) dx = 0.
$$
Note that
\begin{align*}
& \sum_{k_1,k_2=1}^{2^m}\frac {\nu_{k_1} }{\lambda_{k_1,\alpha,m}} \cdot\frac {\nu'_{k_2} }{\lambda_{k_2,\alpha,m}} \E_{Q_{0,n}} \left[ \frac{\langle q_i(Z_i|\cdot ), w_{k_1}\rangle }{g_{0,i}(Z_i)}\cdot \frac{\langle q_i(Z_i|\cdot ), w_{k_2}\rangle }{g_{0,i}(Z_i)}\right]\\
& = \sum_{k_1,k_2=1}^{2^m} \frac {\nu_{k_1} \nu'_{k_2}}{\lambda_{k_1,\alpha,m} \lambda_{k_2,\alpha,m}} \int_0^1 \int_0^1 F_i(x,y) w_{k_1}(x) w_{k_2}(y) dx dy.
\end{align*}
Now, we use that $1+u \leq \exp(u)$ for all real numbers $u$ and since $w_{k}$ are orthonormal eigenfunctions of $K = \frac1n  \sum_{i=1}^n K_i^\star K_i$ we also have that
\begin{align*}
\chi^2 (Q_n, Q_{0,n}) &\leq -1 + \E_{\nu,\nu'} \exp\left( 2^{-2m(s+1)}\delta^2 \sum_{k_1,k_2=1}^{2^m}
 \frac {\nu_{k_1} \nu'_{k_2}}{\lambda_{k_1,\alpha,m} \lambda_{k_2,\alpha,m}}n \cdot\right.\\
&\hspace{3cm}\left. \int_0^1 \int_0^1  \frac 1n  \sum_{i=1}^n F_i(x,y) w_{k_1}(x) w_{k_2}(y) dx dy \right).
\end{align*}
Remember that
$$
\int_0^1 \int_0^1 \frac 1n \sum_{i=1}^n  F_i(x,y) w_{k_1}(x) w_{k_2}(y) dx dy 
=
\langle Kw_{k_1}, w_{k_2}\rangle = \lambda_{k_1}^2 \cdot\langle w_{k_1}, w_{k_2}\rangle,
$$
where $\lambda_k^2$ are eigenvalues of $K$.


We use that
\begin{align*}
& \frac{\lambda_{k}^2}{\lambda_{k,\alpha,m}^2 }= \frac{\lambda_{k}^2}{( z^{-2}_\alpha  \cdot \lambda_{k}^2 ) \vee 2^{-m} } \leq z^2_\alpha ,
\end{align*}
and that $\nu_k\nu'_k$, $k=1,\dots, 2^m$, are Rademacher distributed and independent, to further obtain
\begin{align*}
\chi^2 (Q_n, Q_{0,n}) &\leq - 1 + \E_\nu \exp \left(\sum_{k=1}^{2^m} 2^{-2m(s+1)}\delta^2\frac {\nu_k }{\lambda_{k,\alpha,m}^2} n  \lambda_{k}^2 \right)\\
&= -1 + \E_\nu\prod_{k=1}^{2^m}\exp \left( \nu_k 2^{-2m(s+1)} \delta^2 n \frac {\lambda^2_k }{\lambda_{k,\alpha,m}^2}\right)\\
&\leq - 1 + \prod_{k=1}^{2^m} \cosh (2^{-2m(s+1)}\delta^2 n z^2_\alpha  ).
\end{align*}
Let us further see that $\cosh(u)\leq \exp(u^2/2)$ for all real numbers $u$ and therefore
\begin{align*}
&\chi^2 (Q_n, Q_{0,n}) \leq - 1 + \exp\left( \frac 12 \sum_{k=1}^{2^m} (2^{-2m(s+1)} \delta^2 n z^2_\alpha )^2 \right)\\
& \leq -1+\exp( 2^{-m(4s+3)} \delta^4 \cdot n^2 z^4_\alpha ),
\end{align*}
which, for our choice of $m$ and $\delta = \delta_m$ (cf. Lemma~\ref{lemma:alternatives}), tends to $0$ as $n z_\alpha^2$ becomes large. This concludes the proof.\hfill\qed

\subsection{Auxiliary lemmas}

\begin{lemma}\label{lemma:alternatives}
Let $\mathbb{P}_{\nu}$ denote the uniform distribution on $\{-1,1\}^{2^m}$. For any $\gamma\in(0,1)$, there exists $\delta=\delta_m = c/ \sqrt{2 \log(2^{m+1}/\gamma)}$ in the definition of $f_{\nu}$ for some constant $c>0$ independent of $m$ and a subset $A_{\gamma}\subseteq \{-1,1\}^{2^m}$ with $\mathbb{P}_{\nu}(A_{\gamma})\geq 1-\gamma$, such that
\begin{itemize}
\item[(i)] $f_{\nu}\geq 0$ and $\|f_\nu\|_\infty\le M$, for all $\nu\in A_{\gamma}$,
\item[(ii)] $f_{\nu} \in \bar{\P}_s^{pq}(L)$ for all $\nu\in A_{\gamma}$, and
\item[(iii)] $|D( f_{\nu}) - D( f_0)|\geq \delta^2 \cdot 2^{-2ms}$ for all $\nu$.
\end{itemize}
\end{lemma}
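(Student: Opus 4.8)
Set $g_\nu := f_\nu - f_0 = 2^{-m(s+1)}\delta\sum_{j=1}^{2^m}\nu_j\lambda_{j,\alpha,m}^{-1}v_j$, a random element of $W_m$. Since each $v_j$ is orthogonal to $f_0 = \mathds 1_{[0,1]}$, we have $\int_0^1 g_\nu = 0$, so $\int_0^1 f_\nu = 1$ for every $\nu$, and only positivity, the sup-norm, the Besov norm and the functional gap remain. Two ingredients are used throughout. First, the \emph{trace bound} $\sum_{j=1}^{2^m}\lambda_j^2 = \trace(K|_{W_m})\le z_\alpha^2$: writing $\trace(K|_{W_m}) = n^{-1}\sum_{i=1}^n\sum_{k=0}^{2^m-1}\|K_i\psi_{mk}\|^2$ and using that $\psi_{mk}$ has vanishing integral and support of length $2^{-m}$, the privacy inequality $q_i(z|x)\le e^{2\alpha}q_i(z|x')$ forces $|\int q_i(z|x)\psi_{mk}(x)\,dx|\lesssim 2^{-m/2}g_{0,i}(z)z_\alpha$, whence $\|K_i\psi_{mk}\|^2\lesssim 2^{-m}z_\alpha^2$ after integrating $z$ (because $\int g_{0,i}\,d\mu_i = 1$), and the claim follows by summing over $k$ and $i$. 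Second, the \emph{reproducing-kernel identity} $\sum_{j=1}^{2^m}v_j(x)^2 = \sum_{k=0}^{2^m-1}\psi_{mk}(x)^2 = 2^m$ for every $x$, valid because the left-hand side does not depend on the orthonormal basis of $W_m$. Part (iii) is then immediate: by Parseval and $\int_0^1 f_0 g_\nu = 0$,
\[
D(f_\nu) - D(f_0) = 2\int_0^1 f_0 g_\nu + \|g_\nu\|_{L_2}^2 = \|g_\nu\|_{L_2}^2 = 2^{-2m(s+1)}\delta^2\sum_{j=1}^{2^m}\lambda_{j,\alpha,m}^{-2},
\]
and since $\lambda_{j,\alpha,m}^{-2} = \min(z_\alpha^2\lambda_j^{-2},2^m)\ge (z_\alpha^{-2}\lambda_j^2 + 2^{-m})^{-1}$, the inequality $\sum_j b_j^{-1}\ge (2^m)^2/\sum_j b_j$ together with the trace bound gives $\sum_{j}\lambda_{j,\alpha,m}^{-2}\ge 2^{2m}(1 + z_\alpha^{-2}\sum_j\lambda_j^2)^{-1}\ge 2^{2m}/2$, so $|D(f_\nu) - D(f_0)|\ge \tfrac12\delta^2 2^{-2ms}$ for all $\nu$ (the constant $\tfrac12$ being absorbed into the $c$ defining $\delta$).

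\textbf{(i).} The decisive structural remark is that every function in $W_m$ is constant on each of the $2^{m+1}$ dyadic intervals of length $2^{-(m+1)}$, so $\|g_\nu\|_\infty$ equals the maximum of $|g_\nu|$ over a finite set $S$ of $2^{m+1}$ representative points. For fixed $x$, $g_\nu(x) = \sum_j\nu_j a_j(x)$ is a Rademacher sum with $\sum_j a_j(x)^2 = 2^{-2m(s+1)}\delta^2\sum_j\lambda_{j,\alpha,m}^{-2}v_j(x)^2\le 2^{-2m(s+1)}\delta^2\cdot 2^m\cdot 2^m = 2^{-2ms}\delta^2$, using $\lambda_{j,\alpha,m}^{-2}\le 2^m$ and the reproducing-kernel identity. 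Hoeffding's inequality and a union bound over $S$ give $\mathbb P_\nu(\|g_\nu\|_\infty > t)\le 2^{m+2}\exp(-t^2 2^{2ms}/(2\delta^2))$, which for $\delta = \delta_m = c/\sqrt{2\log(2^{m+1}/\gamma)}$ and $t = \sqrt2\,c\,2^{-ms}$ is at most $\gamma^2/2^m\le\gamma$; let $A_\gamma$ be the complementary event. On $A_\gamma$, $\|g_\nu\|_\infty\le\sqrt2\,c$, hence $f_\nu = 1 + g_\nu\ge 1 - \sqrt2\,c\ge 0$ and $\|f_\nu\|_\infty\le 1 + \sqrt2\,c\le 2\le M$ once $c$ is small enough.

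\textbf{(ii).} Since $\omega_1(\mathds 1_{[0,1]},t,p) = 0$ one has $\|f_0\|_{B_s^{pq}} = 1$, so by the triangle inequality it suffices to show $\|g_\nu\|_{B_s^{pq}}\le L-1$ on (a possibly shrunk) $A_\gamma$. Writing $g_\nu = \sum_k d_k\psi_{mk}$ with $d_k = 2^{-m(s+1)}\delta\sum_j\nu_j\lambda_{j,\alpha,m}^{-1}(v_j)_k$, one estimates the first modulus of smoothness of this single-scale Haar function in two ranges (for $j\le m$ by $\omega_1(g_\nu,2^{-j},p)\le 2\|g_\nu\|_{L_p} = 2^{m(1/2-1/p)+1}\|d\|_{\ell_p}$, and for $j>m$, by counting the $\le 2^{m+1}$ jumps of $g_\nu$ and bounding their sizes by $\lesssim 2^{m/2}\|d\|_{\ell_p}$, by $\omega_1(g_\nu,2^{-j},p)\lesssim 2^{-j/p}2^{m/2}\|d\|_{\ell_p}$), and then sums the $\ell_q$-series, whose geometric tails are dominated by the endpoint $j\asymp m$ in the relevant range of $s$; this yields $\|g_\nu\|_{B_s^{pq}}\lesssim 2^{m(s+1/2-1/p)}\|d\|_{\ell_p}$. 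It remains to bound $\|d\|_{\ell_p}$: since $\|d\|_{\ell_2}^2 = \|g_\nu\|_{L_2}^2\le 2^{-2ms}\delta^2$ and a coordinatewise Hoeffding bound with a union bound over the $2^m$ indices $k$ gives $|d_k|\lesssim 2^{-m/2}2^{-ms}\delta\sqrt{\log(2^m/\gamma)}$ with $\mathbb P_\nu$-probability $\ge 1-\gamma$, one gets $\|d\|_{\ell_p}\lesssim 2^{m(1/p-1/2)}2^{-ms}\delta\sqrt{\log(2^m/\gamma)}$, so that $\|g_\nu\|_{B_s^{pq}}\lesssim\delta\sqrt{\log(2^{m+1}/\gamma)}\asymp c$, which is $\le L-1$ once $c$ is small ($L>1$). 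Intersecting the at most three exceptional events and renaming $\gamma$ gives the $A_\gamma$ of the statement.

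\textbf{Main obstacle.} Parts (iii) and (i) are short computations; the crux is the Besov-norm bound in (ii), where one has to weigh the small amplitude $2^{-m(s+1)}\delta$ against the three competing scale factors it meets (the inflation $\lambda_{j,\alpha,m}^{-1}\le 2^{m/2}$, the $L_2$-normalization of the $v_j$, and the level-$m$ dilation of the Haar system), using only a crude modulus-of-smoothness estimate for a genuinely rough, discontinuous function. As with the coarse control of the bias for $s'\ge 1$ in the proof of Theorem~\ref{noninteractiveUB}, this is exactly where the bookkeeping of the $\ell_q$-sums and of the coordinatewise concentration of $(d_k)$ must be carried out carefully, and where the constant $c$ in $\delta$ is forced to be smallest; the other steps then follow mechanically.
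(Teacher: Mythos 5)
Parts (i) and (iii) of your argument are correct and essentially reproduce the paper's proof in equivalent form. For (iii), your trace bound $\sum_j\lambda_j^2\le z_\alpha^2$ via $\|K_i\psi_{mk}\|^2\le 2^{-m}z_\alpha^2$ (subtracting $e^{-2\alpha}g_{0,i}(z)$ and using $\|\psi_{mk}\|_1=2^{-m/2}$) is the same estimate the paper obtains through Bessel's inequality applied to $f_{z,i}=q_i(z|\cdot)/g_{0,i}(z)-e^{-2\alpha}$, and your uniform use of $\min(1/a,1/b)\ge 1/(a+b)$ plus Cauchy--Schwarz replaces the paper's split into $\mathcal K$ and its complement; both yield the gap up to an immaterial absolute constant (the paper itself only gets $\tfrac34\delta^2 2^{-2ms}$). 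For (i), your reproducing-kernel identity $\sum_j v_j(x)^2=2^m$ is just the orthogonality of the change-of-basis matrix $(a_{kj})$ that the paper writes out explicitly, and your Hoeffding bound over the $2^{m+1}$ dyadic cells is in bijection with the paper's Hoeffding bound over the $2^m$ Haar coefficients; both define the same kind of event $A_\gamma$.

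Part (ii) is where you genuinely diverge from the paper, and where your argument has a gap. The paper reduces (ii) to a bound on $\sum_k|\langle f_\nu-f_0,\psi_{mk}\rangle|^p$ by invoking the Haar-coefficient characterization of $B_s^{pq}$ for $s<1$ (Proposition~4.3.2 of Gin\'e--Nickl); you instead try to prove the implication directly from the modulus-of-smoothness definition \eqref{eq:BesovNorm}. Your estimates for $j\le m$ and the jump-counting bound $\omega_1(g_\nu,2^{-j},p)\lesssim 2^{-j/p}2^{m/2}\|d\|_{\ell_p}$ for $j>m$ are correct, but the latter is in fact an equality up to constants: a nontrivial step function with minimal mesh $2^{-(m+1)}$ has $\|\Delta_h g_\nu\|_{L_p}^p=h\sum_\ell|\mathrm{jump}_\ell|^p$ exactly for small $h$, so $2^{js}\omega_1(g_\nu,2^{-j},p)\asymp 2^{j(s-1/p)}$ as $j\to\infty$. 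Consequently the tail $\sum_{j>m}[2^{js}\omega_1(g_\nu,2^{-j},p)]^q$ (or the sup, if $q=\infty$) is \emph{not} "dominated by the endpoint $j\asymp m$" once $s\ge 1/p$ --- it diverges --- and this regime is nonempty and important here, since the lemma is used with $p\ge2$ and arbitrary $s\in(0,1)$. So your self-contained route only establishes (ii) for $s<1/p$. (Your analysis does expose a delicate point: with the modulus-of-smoothness norm, a Haar-scale perturbation cannot lie in $B_s^{pq}$ for $s>1/p$, so the paper's appeal to the wavelet characterization at level $s<1$ is doing real work there; but as a proof of the lemma as the paper intends it, your step fails where theirs cites the characterization.) The remaining bookkeeping in your (ii) --- the coordinatewise Hoeffding bound on $d_k$ and the resulting $\|d\|_{\ell_p}\lesssim 2^{m(1/p-1/2)}2^{-ms}\delta\sqrt{\log(2^{m+1}/\gamma)}$ --- coincides with the paper's computation on $A_\gamma$ and is fine.
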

\begin{proof}
Representing the orthonormal eigenvectors $w_1,\dots, w_{2^m}$ as linear combination
$$
w_j=\sum_{k=0}^{2^m-1}a_{kj}\psi_{mk}
$$
of the basis functions $\psi_{mk}$, the $2^m\times 2^m$ matrix $(a_{kj})_{kj}$ of corresponding coefficients is orthogonal and
\begin{align*}
f_{\nu}(x)=f_0(x)+ 2^{-m(s + 1)} \delta\sum_{j=1}^{2^m}\sum_{k=0}^{2^m-1} \nu_j \frac {a_{kj}}{\lambda_{j,\alpha,m}} \psi_{mk}(x).
\end{align*}
For some $\gamma$ in (0,1), define
$$
A_\gamma = \left\{ \nu: \bigg\arrowvert \sum_{j=1}^{2^m}\nu_j \frac {a_{kj} }{\lambda_{j,\alpha,m}}\bigg\arrowvert \leq {2^{m/2}}{\sqrt{2 \log\left(\frac{2^{m+1}}{\gamma}\right)}} \ \text{for all } 0\leq k\leq 2^m-1
\right\}.
$$
By the union bound and  Hoeffding's inequality it is easy to see that $\mathbb{P}_{\nu}(A_\gamma) \geq 1-\gamma$.

$(i)$ Since the basis functions $\psi_{mk}$, $k=0, \dots, 2^m-1$, have disjoint support and are bounded
in absolute value by $2^{m/2}$, it has to be shown that
\begin{equation*}
\bigg\arrowvert 2^{-m(s + 1)} \delta\sum_{j=1}^{2^m }\nu_j \frac {a_{kj} }{\lambda_{j,\alpha,m}}\bigg\arrowvert\leq 2^{-m/2}
\end{equation*}
for all $\nu\in A_\gamma$. But on this event we have
$$
\bigg\arrowvert 2^{-m(s + 1)} \delta\sum_{j=1}^{2^m} \nu_j \frac {a_{kj} }{\lambda_{j,\alpha,m}}\bigg\arrowvert 2^{m/2} \leq \delta \cdot 2^{-ms} \sqrt{2 \log(2^{m+1}/\gamma)}.
$$
But by our choice of $\delta$ and provided $c\le1$, it follows that
$$
\delta\cdot {2^{-ms}}{\sqrt{2\log\big(2^{m+1}/\gamma\big)}} \leq 1.
$$
Note that this also proves that $\|f_\nu\|_\infty\le 2\le M$.

$(ii)$ We have already seen that for $\nu\in A_\gamma$, $f_{\nu}$ is a density. Note that $\|f_\nu\|_{B_s^{pq}}\le 1 + \|f_\nu-f_0\|_{B_s^{pq}}$. Since $s<1$, the Haar wavelets can be used to characterize the Besov space and in view of Proposition~4.3.2 of \citet{Gine16} and if $c>0$ is sufficiently small, it remains to show that the wavelet coefficient norm of $f_\nu-f_0$ is bounded by $L-1>0$. That is, we need to show that
\begin{align*}
\sum_{k=0}^{2^m-1}\big\arrowvert\big\langle f_{\nu}-f_0,\psi_{mk}\big\rangle\big\arrowvert^p \leq (L-1)^p \cdot 2^{-mp(s+1/2-1/p)}.
\end{align*}
Because of
$$
\big\langle f_{\nu}-f_0,\psi_{mk}\big\rangle=2^{-m(s+1)}\delta\sum_{j=1}^{2^m} \nu_j\frac{a_{kj} }{\lambda_{j,\alpha,m}},
$$
for any $0\leq k\leq 2^m-1$, this is the case if
\begin{align*}
\sum_{k=0}^{2^m-1}\bigg\arrowvert 2^{-m(s+1)}\delta\sum_{j=1}^{2^m} \nu_j\frac{a_{kj} }{\lambda_{j,\alpha,m}}\bigg\arrowvert^p\leq (L-1)^p\cdot 2^{-mp(s+1/2-1/p)}.
\end{align*}
But if $\nu\in A_\gamma$, we have
\begin{align*}
 &\sum_{k=0}^{2^m-1}\bigg\arrowvert 2^{-m(s+1)}\delta\sum_{j=1}^{2^m}\nu_j\frac{a_{kj} }{\lambda_{j,\alpha,m}}\bigg\arrowvert^p \\
 & \leq 2^m \left( 2^{-m(s+1)}\delta 2^{m/2}  \sqrt{2 \log\left(\frac{2^{m+1}}{\gamma} \right)} \right)^p \\
 & \leq 2^{-m p (s -1/p  +1/2 ) }c^p.
\end{align*}
Thus, if $c \leq L-1$ and $\nu\in A_\gamma$, $f_\nu$ belongs to $\P_s^{pq}(L)$.

$(iii)$ By orthonormality of $f_0,w_1,...w_{2^m}$, we have that
$$
\int_0^1 f_\nu^2(x)dx= \int_0^1 f_0^2(x)dx + 2^{-2m(s+1)}\delta^2 \sum_{k=1}^{2^m} \frac 1{\lambda_{k,\alpha,m}^2} .
$$
We get
\begin{align*}
|D(f_\nu) - D(f_0)| & =  2^{-2m(s+1)}\delta^2 \sum_{k=1}^{2^m}\left( 2^{m} \cdot \mathds 1_{z_\alpha^{-1} \lambda_k  < 2^{-m/2}} + \frac{z^2_\alpha}{\lambda_k^2} \cdot \mathds 1_{z_{\alpha}^{-1} \lambda_k  \geq 2^{-m/2}} \right).
\end{align*}
Denote by $\kappa$ the number of values in the set $\mathcal{K} = \{ k : z_{\alpha}^{-1} \lambda_k  \geq 2^{-m/2}\}$.
We have
\begin{align*}
|D(f_\nu) - D(f_0)| & =  2^{-2ms}\delta^2 \left( 2^{-m}(2^{m}-\kappa) + 2^{-2m} \sum_{k \in \mathcal{K}}\frac {z^2_\alpha}{ \lambda_{k}^2}  \right)\\
& =  2^{-2ms}\delta^2 \left( 1 - 2^{-m}\kappa + 2^{-2m} z^2_\alpha \sum_{k \in \mathcal{K}}\frac {1}{ \lambda_{k}^2} \right)\\
& \geq  2^{-2ms}\delta^2 \left( 1 - 2^{-m}\kappa + 2^{-2m} z^2_\alpha \kappa^2 (\sum_{k \in \mathcal{K}}{ \lambda_{k}^2} )^{-1}\right),
\end{align*}
where we used the inequality between harmonic and arithmetic mean.
If we can prove
\begin{equation}\label{specbound}
\sum_{k =1}^{2^m}{ \lambda_{k}^2} \leq z^2_\alpha,
\end{equation}
we conclude that
\begin{equation*}\label{dist}
|D(f_\nu) - D(f_0)| \geq 2^{-2ms}\delta^2 (1 - 2^{-m} \kappa + 2^{-2m}\kappa^2) \geq \frac 34 \delta^2 2^{-2ms}.
\end{equation*}
Thus, let us finish by the proof of (\ref{specbound}).
It is easy to see that,
\begin{align*}
\sum_{k =1}^{2^m}{ \lambda_{k}^2} & = \sum_{k =1}^{2^m} \int_0^1 \int_0^1 w_k(x) w_k(y)\frac 1n \sum_{i=1}^n F_i(x,y) dx dy\\
&= \frac 1n \sum_{i=1}^n \int_{\mathcal Z_i} \sum_{k =1}^{2^m} \left(\int_0^1 \frac{q_i(z|x)}{g_{0,i}(z)} w_k(x) dx\right)^2 g_{0,i}(z) d\mu_i(z) \\
&= \frac 1n \sum_{i=1}^n \int_{\mathcal Z_i} \sum_{k =1}^{2^m} \left(\int_0^1 \left( \frac{q_i(z|x)}{g_{0,i}(z)} - e^{-2\alpha} \right) w_k(x) dx\right)^2 g_{0,i}(z) d\mu_i(z)
\end{align*}
as $\int_0^1 w_k(x) dx = 0$. By our choice of densities $q_i$, we have $0 \leq f_{z,i}(x):= \frac{q_i(z|x)}{g_{0,i}(z)} - e^{-2\alpha} \leq z_\alpha$. Since the $w_1, \dots, w_{2^m}$ are orthonormal and $\|f_{z,i}\|_{L_2} \le z_\alpha$, it follows that
\begin{align*}
\sum_{k =1}^{2^m} \left(\int_0^1 \left( \frac{q_i(z|x)}{g_{0,i}(z)} - e^{-2\alpha} \right) w_k(x) dx\right)^2 
&=
\sum_{k =1}^{2^m} \langle f_{z,i}, w_k\rangle^2 
=
\left\|\sum_{k =1}^{2^m} \langle f_{z,i}, w_k\rangle w_k\right\|_{L_2}^2\\
&\le
\|f_{z,i}\|_{L_2}^2 \le z_\alpha^2.
\end{align*}
Moreover, $\int_{\mathcal Z_i} g_{0,i}(z) d\mu_i(z) = \int_0^1\int_{\mathcal Z_i} q_i(z|x) d\mu_i(z) dx = 1$, and we arrive at $\sum_{k =1}^{2^m}{ \lambda_{k}^2} \le z_\alpha^2$, as desired.
\end{proof}

\begin{lemma}\label{bias}
If $f$ belongs to the Besov ball $B_s^{pq}(R) := \{g\in B_s^{pq}([0,1]) : \|g\|_{s}^{pq}\le R\}$ and $s' := s - \left(\frac 1p - \frac 12 \right)_+<1$, then the Haar coefficients of $f$ satisfy
$$
\|\beta_{j \cdot}\|_2 \leq  2^{-j s'}\varepsilon_j, \quad j\ge1,
$$
and $\|\eps\|_q\le C_0R$, for a constant $C_0$ that does not depend on $f$.
\end{lemma}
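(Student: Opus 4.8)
The plan is to reduce the statement, for every admissible $(p,q)$, to the single case $p=2$ with smoothness exponent $s'<1$, where the Haar system characterises the Besov space and the coefficient bound \eqref{eq:HaarNormBound} is directly available.

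\emph{Step 1 (pass to $B_{s'}^{2q}$).} First I would show that $f\in B_{s'}^{2q}$ with $\|f\|_{B_{s'}^{2q}}\le C_0\|f\|_{B_s^{pq}}\le C_0R$. For $p\ge2$ one has $s'=s$, and since $[0,1]$ carries a probability measure, $\|g\|_{L_2}\le\|g\|_{L_p}$; applying this to $g=f$ and to $g=\Delta_h^rf$ (with $r=\lceil s\rceil_>$) gives $\omega_r(f,\cdot,2)\le\omega_r(f,\cdot,p)$ and hence $\|f\|_{B_s^{2q}}\le\|f\|_{B_s^{pq}}$ directly from \eqref{eq:BesovNorm}. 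For $1\le p<2$ I would invoke the classical Besov embedding $B_s^{pq}([0,1])\hookrightarrow B_{s-(1/p-1/2)}^{2q}([0,1])=B_{s'}^{2q}([0,1])$ (cf.\ Prop.~4.3.9 in \citet{Gine16}). In either case, the hypothesis $s'<1$ is what matters: it forces $\lceil s'\rceil_>=1$, so that the $B_{s'}^{2q}$-norm of any function is computed from its first modulus of smoothness $\omega_1(\cdot,\cdot,2)$.

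\emph{Step 2 (apply the Haar coefficient bound).} Next I would apply \eqref{eq:HaarNormBound} with $p$ replaced by $2$ and $s$ replaced by $s'<1$: for every $j\ge0$,
\[
\|\beta_{j\cdot}(f)\|_{\ell_2}=2^{j(1/2-1/2)}\|\beta_{j\cdot}(f)\|_{\ell_2}\le C_2\,\omega_1(f,2^{-j},2).
\]
Set $\varepsilon_j:=2^{js'}\|\beta_{j\cdot}(f)\|_{\ell_2}$. Then the claimed identity $\|\beta_{j\cdot}\|_2=2^{-js'}\varepsilon_j$ holds by definition, and $\varepsilon_j\le C_2\,2^{js'}\omega_1(f,2^{-j},2)$ for all $j\ge1$.

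\emph{Step 3 (sum up).} Taking the $\ell_q$-norm over $j\ge1$ (the supremum if $q=\infty$) and comparing with the Besov norm \eqref{eq:BesovNorm} of $f$ as an element of $B_{s'}^{2q}$,
\[
\|\varepsilon\|_{\ell_q}\le C_2\Big(\sum_{j\ge0}\big[2^{js'}\omega_1(f,2^{-j},2)\big]^q\Big)^{1/q}\le C_2\,\|f\|_{B_{s'}^{2q}}\le C_2C_0R,
\]
which yields the second assertion after renaming the constant. I do not expect a genuine obstacle here; the only point requiring care is the regime $1\le p<2$ with $s\ge1$, where the Haar basis does not characterise $B_s^{pq}$ and \eqref{eq:HaarNormBound} cannot be used directly for $f$ itself — this is precisely why Step 1 first moves to the space $B_{s'}^{2q}$ with $s'<1$. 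One should also note in passing that the norm $\|\cdot\|_{s,\omega}^{pq}$ appearing in the statement is the modulus-of-smoothness norm \eqref{eq:BesovNorm}, so no additional norm-equivalence argument is needed.
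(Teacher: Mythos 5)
Your proof is correct and follows essentially the same route as the paper's: reduce to the Haar coefficient bound \eqref{eq:HaarNormBound} after passing to an exponent-$2$ Besov space with smoothness $s'<1$. The only (harmless) variation is in the case $p\ge2$, where the paper applies H\"older's inequality to the finite coefficient vector, $\|\beta_{j\cdot}\|_2\le 2^{j(1/2-1/p)}\|\beta_{j\cdot}\|_p$, and then uses \eqref{eq:HaarNormBound} at exponent $p$, whereas you perform the $p\to2$ reduction at the level of the moduli of smoothness via $\|\cdot\|_{L_2}\le\|\cdot\|_{L_p}$ and then use \eqref{eq:HaarNormBound} at exponent $2$.
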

\begin{proof}[Proof of Lemma~\ref{bias}]
We shall consider first the case $p\geq 2$. In that case, by the H\"older inequality with
$\frac 12 = \frac 1p + \left( \frac 12 - \frac 1p\right)$, we get
$$
\|\beta_{j \cdot}\|_2 \leq 2^{j \left(\frac 12 - \frac 1p \right)} \|\beta_{j \cdot}\|_p.
$$
Since $s=s'<1$, by (2.3), we have that $\|\beta_{j \cdot}\|_p = 2^{-j (s + \frac 12 - \frac 1p)} \varepsilon_j$, for a sequence $\{\varepsilon_j\} \in \ell_q$ with $\|\eps\|_q \le \|f\|_{B_s^{pq}}\le C_0R$, giving
$$
\|\beta_{j \cdot}\|_2 \leq 2^{-js} \varepsilon_j.
$$
In the case $1\le p<2$, we use the continuous embedding
$$B_s^{pq} \subseteq B_{s- \frac 1p + \frac 12}^{2q} = B_{s'}^{2q},$$
which follows from the characterization of the Besov space in terms of wavelet coefficients. Again, by (2.3), we get
$$
\|\beta_{j \cdot}\|_2 = 2^{-js'} \varepsilon_j,
$$
for a sequence $\{\varepsilon_j\} \in \ell_q$ as desired.
\end{proof}

\begin{lemma}\label{lemma:bias}
Fix $J\ge1$ and, for $j\ge J$ and $k=0,\dots, 2^j-1$, let $\beta_{jk}$ be the Haar coefficients of $f\in \P_s^{pq}(L)$. Then, for $s' = s - \left( \frac{1}{p} - \frac12\right)_+$, we have
$$
\sum_{j \ge J} \|\beta_{j\cdot}\|_2^2\; \lesssim\; 2^{-2Js'} \mathds 1_{[0,1)}(s') + 2^{-J\frac53}  \mathds 1_{[1,\infty)}(s').
$$
\end{lemma}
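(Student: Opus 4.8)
The plan is to prove the two regimes $s'\in[0,1)$ and $s'\in[1,\infty)$ separately, treating the first one directly via Lemma~\ref{bias} and a geometric series, and reducing the second to the first by a Besov embedding into a space of smoothness parameter below $1$.

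First I would handle the case $0\le s'<1$. Since $f\in\P_s^{pq}(L)$ we have $\|f\|_{B_s^{pq}}\le L$, so $f$ lies in a fixed ball of $B_s^{pq}$ with $s'<1$ and Lemma~\ref{bias} applies: it gives $\|\beta_{j\cdot}\|_2\le 2^{-js'}\eps_j$ for every $j\ge1$, with $\|\eps\|_{\ell_q}\lesssim L$. In particular $\sup_j\eps_j\le\|\eps\|_{\ell_q}\lesssim L$ (recall $q\ge1$, hence $\|\cdot\|_{\ell_\infty}\le\|\cdot\|_{\ell_q}$). Since $J\ge1$, this yields at once
$$
\sum_{j\ge J}\|\beta_{j\cdot}\|_2^2\;\le\;\Big(\sup_j\eps_j^2\Big)\sum_{j\ge J}2^{-2js'}\;\le\;\frac{C\,L^2}{1-2^{-2s'}}\,2^{-2Js'}\;\lesssim\;2^{-2Js'},
$$
which is the claimed bound in this regime; the geometric series converges because $s'>0$, and the constant $1/(1-2^{-2s'})$ is permitted to depend on $s,p,q$.

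Next I would treat $s'\ge1$. Put $\tilde s:=\tfrac56+(\tfrac1p-\tfrac12)_+$, so that $\tilde s':=\tilde s-(\tfrac1p-\tfrac12)_+=\tfrac56<1$ and $\tilde s\le s$ (indeed $\tilde s\le s\iff\tfrac56\le s'$, which holds since $s'\ge1$). Invoking the continuous embedding $B_s^{pq}([0,1])\hookrightarrow B_{\tilde s}^{pq}([0,1])$ valid on the bounded interval for $\tilde s\le s$ (cf.\ \citet{Gine16}, \cite{Hardle98}), one gets $\|f\|_{B_{\tilde s}^{pq}}\le C(s,\tilde s,p,q)\,\|f\|_{B_s^{pq}}\lesssim L$, so $f$ lies in a fixed ball of $B_{\tilde s}^{pq}$ with $\tilde s'<1$. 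Applying the first part with $\tilde s$ in place of $s$ then gives
$$
\sum_{j\ge J}\|\beta_{j\cdot}\|_2^2\;\lesssim\;2^{-2J\tilde s'}\;=\;2^{-2J\cdot\frac56}\;=\;2^{-\frac53 J},
$$
which is the claimed bound when $s'\ge1$. Combining the two cases finishes the proof.

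The only step that is not a one-line estimate is the Besov embedding $B_s^{pq}\hookrightarrow B_{\tilde s}^{pq}$ for $\tilde s<s$, and the (mild) subtlety there is that for $s\ge1$ the Besov norm uses the modulus of smoothness of order $\lceil s\rceil_>\ge2$ while for $\tilde s=\tfrac56$ it uses order $1$; one therefore first passes to a norm for $B_{\tilde s}^{pq}$ defined with a modulus of the same (higher) order (legitimate since that order exceeds $\tilde s$), and then simply gains the factor $2^{-j(s-\tilde s)}$ when going from $2^{js}\omega_r(f,2^{-j},p)$ to $2^{j\tilde s}\omega_r(f,2^{-j},p)$, which keeps the sequence in $\ell_q$ with controlled norm. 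I expect this to be the main obstacle to a fully self-contained write-up; everything else reduces to Lemma~\ref{bias} and summing a geometric series.
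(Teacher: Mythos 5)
Your proof is correct and follows essentially the same route as the paper's: for $s'<1$ it applies Lemma~\ref{bias} together with $\|\eps\|_\infty\le\|\eps\|_q$ and a geometric series, and for $s'\ge1$ it embeds into a Besov space of smoothness index giving $\tilde s'=\frac56$ before reusing the first case. The only (immaterial) difference is that for $p<2$ the paper embeds into the $B^{2q}_{5/6}$ scale whereas you stay in the $(p,q)$ scale and shift only the smoothness index; your explicit remark about matching the order of the modulus of smoothness in the embedding is a point the paper leaves implicit.
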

\begin{proof}
We consider successively the cases where $1\le p<2$ and where $p \geq 2$.
If $1\le p<2$, the continuous embedding $$B_s^{pq} \subseteq B_{s - \frac 1p + \frac 12}^{2q}$$ holds in view of the definition of the wavelet Besov norm.
Now, in the case $s' = s - \frac 1p + \frac 12 < 1$ we get, by Lemma~\ref{bias}, that
$$
\sum_{j \ge J} \|\beta_{j\cdot}\|_2^2 \le \sum_{j \ge J} 2^{-2js'} \varepsilon_j^2 \leq C\cdot 2^{-2J s'},
$$
for some $C>0$ that does not depend on $f$, by using that $\|\eps\|_\infty\le \|\eps\|_q$.
In case $s' \geq 1 > 5/6$, we use the further embedding $B_s^{pq} \subseteq B_{s'}^{2q}\subseteq B_{5/6}^{2q}$. Thus, from Lemma~\ref{bias}, we get that $\|\beta_{j\cdot}\|_2\le 2^{-j5/6} C$, which implies
$$
\sum_{j \ge J} \|\beta_{j\cdot}\|_2^2 \le \sum_{j \ge J} 2^{-j \frac 53} C^2 \leq C^2\cdot 2^{-J \frac{5}3 }.
$$
If $p \geq 2$, and if $s' = s < 1$, then we apply directly Lemma~\ref{bias} to get
$$
\sum_{j \ge J} \|\beta_{j\cdot}\|_2^2 \leq C 2^{-2Js}, \quad \text{for some constant }C>0.
$$
If $s' = s \geq 1$, we use the embedding $B_s^{pq} \subseteq B_{5/6}^{pq}$ and conclude, again from Lemma~\ref{bias}, that
$$
\sum_{j \ge J} \|\beta_{j\cdot}\|_2^2 = \sum_{j\ge J} 2^{-j \frac 53} \varepsilon_j^2 \leq C\cdot 2^{-J \frac{5}3 }, \quad \text{for some constant }C>0.
$$
\end{proof}


\section{Proofs of Section~4 (sequentially interactive protocols}
\label{sec:App:SEQINTER}

\subsection{Concentration of the sanitized density estimator}

\begin{proposition}\label{exponineq}
Fix $M,L>0$, $1\le p,q\le \infty$ and $s>0$, and let $\bar{\P}_s^{pq}(L, M)$ be defined as in Theorem~4.1. Then there exist constants $c_1,\,c_2>0$, such that for any $n\ge1$, $\alpha\le 1$ and $J\ge 2$, the estimator $\hat f_J^{(1)}$ in (4.1) satisfies
\begin{align*}
&\sup_{x\in[0,1]}Q_f^{(SI)}\left( \left|\hat f_J^{(1)}(x) - \E_{Q_f^{(SI)}}\left[\hat f_J^{(1)}(x)\right]\right|\geq \left[c_1\frac{J^{a} 2^J}{\alpha \sqrt{n}}  \sqrt{u}\right] \vee \left[ c_2 \frac{J^a 2^J}{n \alpha } u\right] \right) \\
&\quad\quad\leq 4 e^{-u/2},
\end{align*}
for all $u>0$ and all $f\in\bar{\P}_s^{pq}(L, M)$.
\end{proposition}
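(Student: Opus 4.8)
The estimator $\hat f_J^{(1)}$ is a function of $Z^{(1)}$ only, and under $Q_f^{(SI)}$ the law of $Z^{(1)}$ is exactly the one produced by the non-interactive channel \eqref{eq:nonInterQ} on the i.i.d.\ sample $X^{(1)}$. Writing
$$
A_i(x) = \sum_{j=-1}^{J-1}\sum_{k}\psi_{jk}(X_i^{(1)})\psi_{jk}(x),\qquad
B_i(x) = \frac{\sigma}{\alpha}\sum_{j=-1}^{J-1}\sum_{k}\sigma_j\,W_{ijk}\,\psi_{jk}(x),
$$
we have $\hat f_J^{(1)}(x) = \frac1n\sum_{i=1}^n(A_i(x)+B_i(x))$, so that $\hat f_J^{(1)}(x)-\E_{Q_f^{(SI)}}[\hat f_J^{(1)}(x)] = \frac1n\sum_{i=1}^n(A_i(x)-\E A_i(x)) + \frac1n\sum_{i=1}^n B_i(x)$ splits into two independent, centered averages. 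I would bound each separately by a Bernstein-type inequality, allocating tail probability $2e^{-u/2}$ to each, and combine by a union bound.

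For the data part, the nested structure of dyadic intervals implies that $A_i(x)$, as a function of $X_i^{(1)}$, is supported on the level-$J$ dyadic interval $I_J(x)\ni x$ (length $2^{-J}$) and satisfies $|A_i(x)|\le C2^J$; hence $\E[A_i(x)^2]\le C2^{2J}\int_{I_J(x)}f\le C2^J\|f\|_\infty\le C2^JM$, uniformly in $x$. Bernstein's inequality for the bounded i.i.d.\ variables $A_i(x)-\E A_i(x)$ then gives, for all $u>0$,
$$
Q_f^{(SI)}\Bigl(\Bigl|\tfrac1n\sum_{i=1}^n(A_i(x)-\E A_i(x))\Bigr|\ge C\bigl(\tfrac{2^{J/2}}{\sqrt n}\sqrt u\vee\tfrac{2^J}{n}u\bigr)\Bigr)\le 2e^{-u/2},
$$
and since $\alpha\le1$ and $(1\vee j)^a\ge1$, both radii here are bounded by the corresponding radii in the statement (with $J^a2^J/\alpha$ in place of $2^{J/2}$, resp.\ $2^J$).

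For the noise part, note that at each level $j$ there is at most one translate $k=k_j(x)$ with $\psi_{jk}(x)\ne0$, so $B_i(x)=\sum_{j=-1}^{J-1}w_j(x)W_{i,j,k_j(x)}$ with $|w_j(x)|=\frac{\sigma}{\alpha}\sigma_j|\psi_{j k_j(x)}(x)|\le\frac{\sigma}{\alpha}(1\vee j)^a2^j$. Thus $\frac1n\sum_{i=1}^nB_i(x)=\sum_{i,j}\frac{w_j(x)}{n}W_{i,j,k_j(x)}$ is a weighted sum of $n(J+1)$ independent centered Laplace variables with $\sum_{i,j}(w_j(x)/n)^2=\frac1n\sum_j w_j(x)^2\le\frac{\sigma^2}{n\alpha^2}\sum_{j=-1}^{J-1}(1\vee j)^{2a}2^{2j}\le C\frac{J^{2a}2^{2J}}{n\alpha^2}$ and $\max_{i,j}|w_j(x)/n|\le C\frac{J^a2^J}{n\alpha}$, all uniformly in $x$. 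Since a Laplace variable is sub-exponential ($\E e^{\lambda W}\le e^{2\lambda^2}$ for $|\lambda|\le1/2$), the Bernstein inequality for sums of independent sub-exponential variables yields, for all $u>0$,
$$
Q_f^{(SI)}\Bigl(\Bigl|\tfrac1n\sum_{i=1}^nB_i(x)\Bigr|\ge C\bigl(\tfrac{J^a2^J}{\alpha\sqrt n}\sqrt u\vee\tfrac{J^a2^J}{n\alpha}u\bigr)\Bigr)\le 2e^{-u/2}.
$$
Combining the two displays by a union bound, noting that the data-part radius is dominated by the noise-part radius (using $\alpha\le1$), and taking the supremum over $x\in[0,1]$ (all constants being $x$-free) gives the assertion with suitable $c_1,c_2$.

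The main obstacle I anticipate is the Haar bookkeeping: verifying that $A_i(x)$ is supported on a $2^{-J}$-interval with $|A_i(x)|\le C2^J$ (needed for $\E[A_i(x)^2]\lesssim 2^JM$), and — crucially — that at each resolution level only a single translate $k$ contributes, so that $B_i(x)$ is genuinely a sum of $J+1$ rather than $\sim2^J$ independent Laplace terms. This, together with tracking the variance proxy $\sum_{j\le J-1}(1\vee j)^{2a}2^{2j}\asymp J^{2a}2^{2J}$ and the maximal coefficient $\asymp J^a2^J/(n\alpha)$, is exactly what produces the two radii in the statement; once these quantities are in place, the two Bernstein estimates and the domination of the bounded part by the noise part are routine.
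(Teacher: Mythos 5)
Your proposal is correct and follows essentially the same route as the paper: split $\hat f_J^{(1)}(x)-\E[\hat f_J^{(1)}(x)]$ into the empirical (data) part and the Laplace noise part, show each is sub-exponential with variance proxy $\lesssim 2^J\|f\|_\infty/n$ resp.\ $\lesssim J^{2a}2^{2J}/(n\alpha^2)$ and scale $\lesssim 2^J/n$ resp.\ $\lesssim J^a2^J/(n\alpha)$, apply Bernstein to each, and union-bound to get $4e^{-u/2}$. The only (harmless) cosmetic difference is that you identify the data part explicitly as $2^J\mathds 1_{I_J(x)}(X_i)$ and invoke off-the-shelf Bernstein inequalities, whereas the paper reaches the same MGF bounds by hand via the projection kernel $F_J$ and the inequalities $(1-v)^{-1}\le e^{2v}$ and $e^u\le 1+u+u^2e$.
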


\begin{proof}
A centered random variable $Y$ is said to be sub-exponential with parameters $(\nu^2,b)$, $\nu^2 >0,\, b>0$, denoted by $SubExp(\nu^2,b)$, if
$$
\E[\exp(t Y)] \leq \exp\left(\frac{\nu^2 t^2}2\right),\text{ for all } |t|<\frac 1b.
$$
The Bernstein inequality states that if $Y$ is $SubExp(\nu^2,b)$, then
$$
\Pr(Y\geq t) \leq \exp\left(-\frac 12 \left(\frac{t^2}{\nu^2} \wedge \frac tb \right) \right), \text{ for all }  t>0,
$$
or, equivalently,
$$
\Pr(Y\geq (\nu \sqrt{u}) \vee (bu)) \leq \exp\left(-\frac u2  \right), \text{ for all }  u>0.
$$
We apply this to the two summands in the decomposition
\begin{align*}
\hat f_J^{(1)}(x) - \E_f\left[\hat f_J^{(1)}(x)\right] &=
\frac 1n \sum_{i=1}^n \sum_{j=-1}^{J-1} \sum_{k=0}^{(1\lor2^j)-1} [ \psi_{jk}(X_i) - \beta_{jk}] \psi_{jk}(x)  \\
&\quad+ \frac 1n \sum_{i=1}^n \sum_{j=-1}^{J-1} \sum_{k=0}^{(1\lor2^j)-1}\sigma_j \frac{\sigma}\alpha W_{ijk} \psi_{jk}(x) \\
&=: T_1(x) + T_2(x).
\end{align*}
Let us start with $T_2$. A Laplace distribution has the following Laplace transform
$$
\E \left[\exp \left( \frac tn \sigma_j \frac{\sigma}\alpha \cdot W_{ijk}\psi_{jk}(x) \right) \right]
 = \frac 1{1 - (\frac tn \sigma_j \frac{\sigma}\alpha \psi_{jk}(x))^2},
 $$
for any $|t| < \frac {n \alpha}{ \sigma \sigma_j \|\psi_{jk}\|_\infty}$, which is further bounded from above by
$$
\exp \left(  2t^2 \sigma_j^2 \left(\frac{ \sigma}{n\alpha}\psi_{jk}(x)\right)^2 \right), \quad  \text{ for any } |t| < \frac {n \alpha}{ 2 \sigma \sigma_j (1\lor 2^{j/2})},
$$
in view of $(1-v)^{-1} \leq e^{2v}$ for all $0\le v\le\frac 12$.
Thus, by independence of the random variables $\{W_{ijk}\}_{i,j,k}$ and since $\sum_k \psi_{jk}(x)^2 \leq 2^j$ for all $x\in[0,1]$ and all $j\ge -1$, we obtain
\begin{align*}
&\E \left[\exp \left(\frac tn \sum_{i=1}^n \sum_{j=-1}^{J-1} \sum_{k=0}^{(1\lor2^j)-1}\sigma_j \frac{\sigma}\alpha W_{ijk} \psi_{jk}(x) \right) \right]\\
&\quad \leq  \left(\prod_{j=-1}^{J-1} \prod_{k=0}^{(1\lor2^j)-1}\exp\left(2t^2 \sigma_j^2 \left(\frac{\sigma}{n\alpha} \psi_{jk}(x)\right)^2\right)\right)^n \\
&\quad\leq \exp\left(2t^2 n \frac{\sigma^2}{n^2\alpha^2} \sum_{j=-1}^{J-1} 2^{j} \sigma_j^2 \right) \\
&\quad=  \exp\left(2t^2 \frac{\sigma^2}{n\alpha^2} \sum_{j=-1}^{J-1} 2^{2j} (1 \vee j)^{2a}) \right)\\
&\quad\leq  \exp\left(\frac12 \left[\frac{2 \sigma J^{a} 2^{J}}{\sqrt{n}\alpha}\right]^2  t^2 \right),
\end{align*}
for any $t$ such that
$$|t| < \min_{-1\le j\le J} \frac{n\alpha}{2 \sigma \sigma_j (1\lor2^{j/2}) } = \frac{n\alpha}{2\sigma J^a 2^J}.
 $$
By the Bernstein inequality we get for all $u>0$,
\begin{align}\label{term2}
\Pr\left( \left|T_2(x)\right|
\geq \left(2\sigma \frac{J^{a}2^{J}}{\alpha \sqrt{n}} \sqrt{u}\right) \vee \left(2\sigma \frac{J^a 2^J}{n\alpha}u\right) \right) \leq 2e^{-u/2}.
\end{align}
For $T_1$, we use the fact that the Haar wavelets generate a multiresolution analysis of $L_2[0,1]$ with Projection operator $P_J$ onto $V_J= \s\{\phi_{Jm} : m=0,\dots, 2^J-1\}$ to write
\begin{align*}
T_1(x) &= \hat{f}^{(1)}_J(x) - [P_J f](x) = \frac{1}{n}\sum_{i=1}^n \kappa_J(x,X_i) - [P_J f](x)\\
&= \frac 1n \sum_{i=1}^n \sum_{k=0}^{2^J-1} [ \phi_{Jk}(X_i) - \alpha_{Jk}] \phi_{Jk}(x).
\end{align*}
We have that
\begin{align*}
\left|\sum_{k} [ \phi_{Jk}(X_i) - \alpha_{Jk}] \phi_{Jk}(x)\right|
&\leq
\sum_k [\phi_{Jk}(X_i) + \alpha_{Jk}]\phi_{Jk}(x) \\
&\leq \sum_k \left(2^{J/2} + \int_0^1 2^{J/2} \phi(2^Jx-k) f(x)dx\right)\phi_{Jk}(x)\\
&\le
2 \cdot2^{J/2} \sum_k 2^{J/2}\phi(2^Jx-k) = 2^{J+1}.
\end{align*}
We write for $|t|\le n 2^{-J-1}$ and $|u| \le \frac{|t|}{n} 2^{J+1} \le 1$, that $e^u \leq 1+u+u^2e$. Thus, we get
\begin{align*}
&\E\left[\exp\left(\frac tn \sum_{i=1}^n \sum_{k} [ \phi_{Jk}(X_i) - \alpha_{Jk}] \phi_{Jk}(x)\right) \right]\\
  &\quad= \left(\E \exp \left(\frac tn \sum_{k} [ \phi_{Jk}(X_1) - \alpha_{Jk}] \phi_{Jk}(x)\right)\right)^n \\
  &\quad\leq \left(  1 + e\frac{t^2}{n^2} \E \left[\left( \sum_{k} [ \phi_{Jk}(X_1) - \alpha_{Jk}] \phi_{Jk}(x)\right)^2\right] \right)^n\\
   &\quad\leq \left( 1 + e\frac{t^2}{n^2} \Var\left( \sum_{k} \phi_{Jk}(X_1) \phi_{Jk}(x)\right)\right)^n.
\end{align*}
The Haar basis is such that for $F_J = 2^J\mathds 1_{[0,2^{-J}]}$, and for all $x,y \in [0,1]$,
$$
\sum_k \phi_{Jk}(x) \phi_{Jk}(y)
= \sum_k 2^J \mathds 1_{[k2^{-J}, (k+1)2^{-J}]^2}(x,y)
\leq F_J(|x-y|),
$$
and it follows that
\begin{align*}
&\Var\left( \sum_{k} \phi_{Jk}(X_1) \phi_{Jk}(x)\right) \leq
\E\left[\left( \sum_{k} \phi_{Jk}(X_1) \phi_{Jk}(x)\right)^2\right] \\
&\quad\le\E\left[ F_J(|X_1-x|)^2 \right]
   \leq  2^{J+1} \|f\|_\infty.
\end{align*}
Thus, for all $t$ such that  $|t| \le \frac n{2^{J+1}}$, we obtain
\begin{align*}
\E\left[\exp\left(\frac tn \sum_{i=1}^n \sum_{k} [ \phi_{Jk}(X_i) - \alpha_{Jk}] \phi_{Jk}(x)\right) \right]
&\leq \left(1 + e\frac{t^2}{n^2} 2^{J+1} \|f\|_\infty \right)^n \\
&\leq \exp\left(\frac{t^2}{2} \cdot e\frac{2^{J+2}}{n} \|f\|_\infty\right)
\end{align*}
Now, by the Bernstein inequality, we get for all $u>0$,
\begin{align}\label{term1}
\Pr\left( |T_1(x)| \geq \left[\|f\|_\infty^{1/2} \frac{2^{(J+2)/2}\sqrt{e}}{ \sqrt{n}} \sqrt{u}\right] \vee \left[ \frac{ 2^J}{n}u\right] \right)
\leq 2e^{-u/2}.
\end{align}
Putting together \eqref{term1} and \eqref{term2}, we get the result.
\end{proof}

\subsection{Proof of Theorem~4.1}

For $f\in\bar\P_s^{pq}(L)$, we have
\begin{align*}
\E_{Q_f^{(SI)}} \tilde D_n &= \E\left[ \E\left[ \tilde D_n \Big| Z^{(1)}\right]\right]
=
\E\left[ \E \left[Z_1^{(2)}| X_1^{(2)}, Z^{(1)}\right]\right]\\
&= \E\left[ \int_0^1 \Pi_\tau[\hat f_J^{(1)} (x)]f(x) dx \right].
\end{align*}
Note that $\Pi_\tau(v) = (\tau \wedge v)\vee(-\tau) = v - (v-\tau)_+ + (-v -\tau)_+$. Thus, the bias can be written as
\begin{align}\label{bias1}
D - \E\left[ \tilde D_n\right] &= D - \E\left[\int_0^1 \hat f_J^{(1)} (x) f(x) dx \right]+ \E \left[\int_0^1 \left(\hat f_J^{(1)} (x) - \tau\right)_+ f(x) dx\right] \notag\\
&\quad - \E \left[\int_0^1 \left( - \hat f_J^{(1)} (x) - \tau\right)_+ f(x) dx\right].
\end{align}
First, compute $\E\left[\int_0^1 \hat f_J^{(1)} (x) f(x) dx\right] = \int_0^1 \sum_{j=-1}^{J-1}\sum_{k=0}^{(1\lor2^J)-1} \beta_{jk} \psi_{jk}(x) f(x)dx = \sum_{j=-1}^{J-1} \|\beta_{j\cdot}\|_2^2$. Thus, by Lemma~\ref{lemma:bias}, we get
\begin{align}\label{bias1_1}
\left|D - \E\left[\int \hat f_J^{(1)} (x) f(x) dx\right]\right| = \sum_{j\ge J} \|\beta_{j\cdot}\|_2^2 \lesssim
\begin{cases}
2^{-2J s'}, \quad&  0 \leq s' < 1,\\
2^{-J \frac 53}, & s' \ge 1.
\end{cases}
\end{align}
Next, we treat the second term on the right-hand-side of (\ref{bias1}), but we skip the third because a similar bound can be obtained by analogous arguments. We write
\begin{align*}
&\E\left[ \int_0^1 (\hat f_J^{(1)} (x) - \tau)_+ f(x) dx\right] \\
&\quad= \E\left[ \int_0^1 \left(\hat f_J^{(1)} (x) - \E\left[ \hat f_J^{(1)} (x)\right]  + \E\left[ \hat f_J^{(1)} (x)\right] - \tau \right)_+ f(x) dx\right]
\end{align*}
and we note that $\tau \geq 2 M$, where $M$ is the uniform bound on $ \|f\|_\infty$. Then, using the fact that the Haar basis generates a multiresolution analysis of $L_2([0,1])$ with projection operator $P_J$ onto $V_J= \s\{\phi_{Jm} : m=0,\dots, 2^J-1\}$,
\begin{eqnarray*}
\tau - \E\left[ \hat f_J^{(1)} (x)\right] &=& \tau - P_J f \geq  \tau-M  \geq \frac \tau2,
\end{eqnarray*}
because $P_J f = \sum_{k=0}^{2^J-1} \alpha_{Jk} \phi_{Jk}(x)\ge0$, $\alpha_{Jk} = \int_0^1 f(x) 2^{J/2} \phi(2^J x - k) dx \leq M 2^{-J/2}$ and,
\begin{equation}\label{eq:ProjBound}
\sup_{x\in[0,1]} P_J f(x) \leq \sup_{x\in[0,1]} M \sum_{k=0}^{2^J-1} \phi(2^J x - k) \leq M,
\end{equation}
as the functions $\phi(2^J \cdot - k) $ have disjoint supports for different values of $k$. For $x\in[0,1]$, let us denote $Y_x = \hat f_J^{(1)} (x) - \E\left[\hat f_J^{(1)} (x)\right]$ and, using the previous consideration, write
\begin{align}
\E\left[ \int_0^1 (\hat f_J^{(1)} (x) - \tau)_+f(x)\,dx\right] &\leq \E \left[\int_0^1\left(Y_x - \frac \tau2\right)_+f(x)\,dx \right] \label{eq:Yx}\\
&= \int_0^1\int _0^\infty \Pr_f\left(Y_x - \frac \tau2 \geq w\right)dw\, f(x)dx\notag\\
&= \int_0^1\int_{\tau/2}^\infty \Pr_f(Y_x \geq u) \, du\, f(x)dx.\notag
\end{align}
We apply Proposition~\ref{exponineq} to get, for some constant $c>0$ not depending on $n$, $f$, $\alpha$ and $u$, and for all $u>0$,
\begin{align}\label{eq:YxConc}
\Pr(Y_x \geq u) \leq 4 \exp\left(- \frac c2 \left[ \left(\frac{n \alpha^2}{J^{2a} 2^{2J}}u^2\right) \wedge \left(\frac{n \alpha}{J^a 2^J} u\right) \right] \right),
\end{align}
which implies
\begin{align*}
&\frac14\int_{\tau/2}^\infty \Pr(Y_x \geq u) du \notag\\
&\quad\le \int_{\tau/2}^{J^a 2^J/\alpha} \exp \left(- \frac c2 \frac{n \alpha^2}{J^{2a} 2^{2J}} u^2\right) du
+ \int_{J^a 2^J/\alpha}^\infty \exp\left(- \frac c2 \frac{n \alpha}{J^a 2^J} u\right) du \nonumber
\end{align*}
Next, we apply Lemma~\ref{lemma:Integrals} with $a_1=0$, $A_1=\frac{c}{2}\frac{n\alpha^2}{J^{2a} 2^{2J}}$, $r_1=2$ and $v_1=\frac\tau2$, and with $a_2=0$, $A_2=\frac{c}{2}\frac{n\alpha}{J^{a} 2^{J}}$, $r_2=1$ and $v_2= J^a 2^J/\alpha$. Note that in the former case,
\begin{align*}
A_1r_1v_1^{r_1} &= \frac c2 \frac{n\alpha^2}{J^{2a}2^{2J}}\frac{\tau^2}{4}
\ge
\frac{c(KM)^2}{2} \frac{n\alpha^2}{J^{2a}2^{2J}}J^{2a+1} 2^{J(1-2(s'\land\frac12))}\\
&=
\frac{c(KM)^2}{2} n\alpha^2 J 2^{-J(1 + 2(s'\land\frac12))}\\
&=
\frac{c(KM)^2}{2} n\alpha^2 \frac{1}{2(s'\land 1)+1}\frac{\log(n\alpha^2)}{\log 2} (n\alpha^2)^{\frac{-1 - 2(s'\land\frac12)}{2(s'\land1)+1}} \\
&\ge
\frac{c(KM)^2}{6\log 2} (n\alpha^2)^{\frac{-1 - 2(s'\land\frac12)}{2(s'\land1)+1}+1} \log(n\alpha^2)\\
&=
\frac{c(KM)^2}{6\log 2} (n\alpha^2)^{\frac{ 2(s'\land1 - s'\land\frac12)}{2(s'\land1)+1}}  \log(n\alpha^2)\\
&\ge
\frac{c(KM)^2}{6\log 2} \log(n\alpha^2) \ge 3,
\end{align*}
for $n\alpha^2\ge \exp(\frac{12\log2}{c(KM)^2})$. In the latter case $A_2r_2v_2^{r_2} = \frac c2 n \ge 3$, if $n\alpha^2\ge6/c$, because $\alpha\le 1$. Therefore, Lemma~\ref{lemma:Integrals} yields
\begin{align*}
\int_{\tau/2}^\infty \exp \left(- \frac c2 \frac{n \alpha^2}{J^{2a} 2^{2J}} u^2\right) du
&\le
(A_1v_1)^{-1} \exp\left(- A_1v_1^{r_1}\right) \\
&=
\frac{2v_1}{A_1r_1v_1^{r_1}} \exp\left(-\frac12 A_1r_1v_1^{r_1}\right)  \\
&\le \frac\tau2 (n\alpha^2)^{-\frac{c(KM)^2}{12\log 2}},
\end{align*}
but for sufficiently large $K$ this will always be smaller than the final rate. Similarly,
\begin{align*}
 \int_{J^a 2^J/\alpha}^\infty \exp\left(- \frac c2 \frac{n \alpha}{J^a 2^J} u\right) du
&\le
2 A_2^{-1} \exp\left(- A_2v_2^{r_2}\right) \\
&=
\frac{2v_2}{A_2r_2v_2^{r_2}} \exp\left(- A_2r_2v_2^{r_2}\right)  \\
&= \frac{J^a 2^J}{\alpha} \frac{4}{cn}e^{-n\frac{c}{2}}\\
&\le J^a 2^J \frac{4}{cn\alpha^2}e^{-n\alpha^2\frac{c}{2}},
\end{align*}
which is much smaller than the final rate. Thus, the only relevant contribution from the bias is the one of \eqref{bias1_1}.

Regarding the variance, we write
$$
\Var[\tilde D_n] = \E \left[ \Var (\tilde D_n |X^{(1)}) \right] + \Var\left[ \E (\tilde D_n |X^{(1)} )\right]
$$
Now,
\begin{align*}
\Var(\tilde D_n |X^{(1)} ) &\leq \E (\tilde D_n ^2 | X^{(1)} ) = \frac 1n \E( (Z_1^{(2)})^2|X^{(1)} ) \\
&= \frac{\tau^2}{n}\left(\frac{e^\alpha+1}{e^\alpha-1}\right)^2 \leq (e+1)^2 4(KM)^2 \frac{J^{2 a +1} 2^{J(1-2(s'\land\frac12))}}{n \alpha^2},
\end{align*}
since $\alpha\le 1$ and $\alpha\le e^\alpha-1$. Moreover,
\begin{align*}
\Var &\left[ \E (\tilde D_n | X^{(1)}) \right] = \Var\left[ \int_0^1 \Pi_\tau\left[\hat f_J^{(1)} (x) \right] f(x) dx \right] \\
&\leq 3 \Var\left[ \int_0^1 (\hat f_J^{(1)} (x) - \tau)_+ f(x) dx\right] + 3 \Var \left[\int_0^1 \hat f_J^{(1)}(x) f(x) dx)\right] \\
&\quad+ 3 \Var\left[ \int_0^1 (-\hat f_J^{(1)} (x) - \tau)_+ f(x) dx\right].
\end{align*}
Again, we only explicitly treat the first two terms, as the first and the third terms are handled analogously. We have
\begin{align*}
\Var&\left[ \int_0^1 (\hat f_J^{(1)} (x) - \tau)_+ f(x) dx\right]
\leq
\E\left[ \left(\int_0^1 (\hat f_J^{(1)} (x) - \tau)_+ f(x) dx \right)^2\right] \\
&\leq \int_0^1 \E\left[ \left(\hat f_J^{(1)} (x) - \tau\right)_+^2\right] f(x) dx \leq \int_0^1 \E\left[\left(Y_x - \frac \tau2\right)_+^2\right] f(x) dx,
\end{align*}
as in \eqref{eq:Yx}. Thus, we may use \eqref{eq:YxConc} and Lemma~\ref{lemma:Integrals} with $a_1=1$ and $A_1$, $r_1$ and $v_1$ as above, satisfying $A_1r_1v_1^{r_1}\ge 3$ and with $a_2=1$ and $A_2$, $r_2$ and $v_2$ as above, satisfying $A_2r_2v_2^{r_2}\ge3$, in order to get
\begin{eqnarray}
\E\left[ \left(Y_x - \frac\tau2\right)_+^2\right]
& = & 2\int_0^\infty t \Pr_f\left( Y_x - \frac\tau2 \geq t\right)dt \nonumber \\
& \leq & 2 \int_{\tau/2}^\infty s \Pr_f (Y_x \geq s) ds \nonumber \\
& \leq & 8 \int_{\tau/2}^{J^a 2^J/\alpha} s \exp\left(- \frac c2 \frac{n \alpha^2}{J^{2a} 2^{2J}} s^2\right) ds \nonumber\\
&  & + 8 \int_{J^a 2^J/\alpha}^\infty s \exp\left(- \frac c2 \frac{n \alpha}{ J^a 2^J} s\right) ds \nonumber \\
& \leq & 8 \frac{3}{A_1r_1} \exp\left( -A_1v_1^{r_1}\right)
+
8 \frac{3v_2}{A_2r_2} \exp\left( -A_2v_2^{r_2}\right)
\nonumber\\
& \leq & 12 \frac 2c \frac{J^{2a} 2^{2J}}{n \alpha^2} \exp\left(- \frac c2 \frac{n \alpha^2}{J^{2a} 2^{2J}} \frac{\tau^2}4 \right)
+
24 \frac 2c \frac{ J^{2a} 2^{2J} }{n \alpha^2} \exp\left( - n \frac c2\right). \label{var1}
\end{eqnarray}
Again, both terms in the last line of the previous display are smaller than the final rate of our estimator, provided that the constant $K$ is large enough.
Finally, we consider the variance of the integrated estimator in (4.1), that is,
\begin{align}
&\Var\left[\int_0^1 \hat f_J^{(1)}(x) f(x) dx \right]
=
\Var\left[\sum_{j=-1}^{J-1} \sum_{k=0}^{(1\lor 2^j)-1}\hat{\beta}_{jk}\beta_{jk} \right]\notag\\
&\quad\leq
\Var\left[\sum_{j=-1}^{J-1}\sum_{k=0}^{(1\lor2^j)-1} \frac 1n \sum_{i=1}^n \psi_{jk}(X_i)\beta_{jk}\right] \notag\\
&\quad\quad+ \Var\left[\frac 1n \sum_{i=1}^n \sum_{j=-1}^{J-1} \sum_{k=0}^{(1\lor2^j)-1} \sigma_j \frac{\sigma}\alpha  W_{ijk} \beta_{jk} \right]\notag\\
&\quad=  \frac 1n \Var\left[ \sum_{j=0}^{J-1}\sum_{k=0}^{2^j-1} \psi_{jk}(X_1) \beta_{jk}\right]
+
\frac 1n \frac{\sigma^2}{\alpha^2} \sum_{j=-1}^{J-1} \sum_{k=0}^{(1\lor2^j)-1} \Var(W_{1jk}) \beta_{jk}^2 \sigma_j^2\notag\\
&\quad\leq  \frac 1n \int_0^1 \left(\sum_{j=0}^{J-1}\sum_{k=0}^{2^j-1} \psi_{jk}(x) \beta_{jk}\right)^2 f(x) dx
+
\frac 2n \frac{\sigma^2}{\alpha^2} \left(1+ \sum_{j=0}^{J-1} j^{2a} 2^j \|\beta_{j \cdot}\|_2^2\right)\notag\\
&\quad\leq \frac 1n M^2 + C\frac {\sigma^2}{n \alpha^2} (1 \vee J^{2a+1} 2^{J(1-2s')}). \label{eq:var2}
\end{align}
where the last inequality follows from \eqref{eq:ProjBound} and Lemma~\ref{lemma:bias}, the latter of which implies
$$
\|\beta_{j \cdot}\|_2^2 \;\lesssim\; 2^{-2 j s'} \mathds 1_{[0,1)}(s') + 2^{-j \cdot \frac 53} \mathds 1_{[1,\infty)}(s'),
$$
and, in turn,
$$
\sum_{j=0}^{J-1} j^{2a} 2^j \|\beta_{j \cdot}\|_2^2 \;\lesssim\; 1 \vee J^{2a+1} 2^{J(1-2s')}.
$$
By putting together \eqref{bias1_1} and \eqref{eq:var2} we get the stated result. \hfill \qed


\subsection{Proof of Theorem~4.2}

Fix a $Q\in\mathcal Q_\alpha^{(SI)}$.
To construct appropriate hypotheses, consider an $S$-regular orthonormal Daubechies wavelet basis
$$
\mathcal W=\left\{\phi_k = \phi(\cdot-r),\psi_{lk} = 2^{l/2}\psi\left(2^l(\cdot) - k\right): k\in\Z, l\in\N_0\right\},
$$
of $L^2(\R)$ with $S>s$ \citep[cf.][Theorem~4.2.10]{Gine16}. This means, in particular, that $\supp \phi\subseteq [0,2S-1]$, $\supp \psi \subseteq [-S,S]$, $\int_\R \psi_{lk}(x)\,dx=0$, $\left\|\sum_{k\in\Z} |\psi(\cdot-k)| \right\|_\infty<\infty$ and $\|\psi_{lk}\|_{1} = 2^{-l/2}\|\psi\|_{1}$. Since $s<1$, it suffices to take $S=1$ and thus, for every fixed $l\ge 1$ and $k=1,\dots, 2^l$, the $\psi_{lk}$ are supported on $[0,1]$. Clearly, $\psi_{lk}\in B_s^{pq}(\R)$, and therefore also $\psi_{lk}\in B_s^{pq}([0,1])$.

Now, for $m\ge 1$, $\delta>0$ and $\nu\in\mathcal V_m:=\{-1,1\}^{2^m}$, define $f_0(x) := 1$ and
$$
f_\nu(x) := 1 + 2^{-m(s+\frac{1}{2})} \delta \sum_{k=1}^{2^{m}} \nu_{k} \psi_{mk}(x), \quad x\in[0,1].
$$
Since
$2^{-m/2}\left\|\sum_{k=1}^{2^m} |\psi_{mk}| \right\|_\infty = \left\|\sum_{k=1}^{2^m} |\psi(\cdot-k)| \right\|_\infty \le \left\|\sum_{k\in\Z} |\psi(\cdot-k)| \right\|_\infty =: D_0 <\infty $, we see that $f_\nu$ is lower bounded by $\frac12$ on $[0,1]$ provided that $2^{-ms}\delta D_0\le \frac12$, which holds (for all $m\ge 0$) if $\delta\le (2D_0)^{-1}$. This shows that $f_\nu$ is a density on $[0,1]$ with $\|f_\nu\|_\infty\le 2$. Moreover, by Proposition~4.3.2 of \citet{Gine16}, we have
\begin{align*}
\|f_\nu\|_{B_{s}^{pq}([0,1])}
&\le
1 + 2^{-m(s+\frac{1}{2})} \delta \left\|\sum_{k=1}^{2^{m}} \nu_{k} \psi_{mk}\right\|_{B_{s}^{pq}([0,1])} \\
&\le
1 + 2^{-m(s+\frac{1}{2})} \delta \left\|\sum_{k=1}^{2^{m}} \nu_{k} \psi_{mk}\right\|_{B_{s}^{pq}(\R)} \\
&\le
1 + c 2^{-m(s+\frac{1}{2})} \delta \left\| \sum_{k=1}^{2^{m}} \nu_{k} \psi_{mk}\right\|_{B_{pq}^{s,W}(\R)}
= 1 + c\delta \le L,
\end{align*}
provided that $\delta>0$ is sufficiently small. Hence, $f_\nu\in\bar{\P}_s^{pq}(L,M)$ for every $\nu\in\mathcal V_m$. Moreover, by construction, $D(f_\nu) = D(f_0) + 2^{-2ms}\delta^2$. If $\Pr_0$ and $\Pr_\nu$ are the probability measures corresponding to $f_0$ and $f_\nu$, respectively, we write $Q_\nu^n := Q\Pr_\nu^n$, $Q_0^n := Q  \Pr_0^n$ and
$
\bar{Q}^n := 2^{-2^m} \sum_{\nu\in\mathcal V_m}Q  \Pr_\nu^n.
$

Set $\Delta := 2^{-2ms}\delta^2/2$. For a measurable function $\hat{D}_n:\mathcal Z\to\R$ and $f \in \bar{\mathcal{P}}_s^{pq}(L,M)$, define
$S_f := \{ z\in\mathcal Z : | \hat{D}_n(z) - D(f)| \ge \Delta\}$, $S_{f,1} := \{ z\in\mathcal Z : \hat{D}_n(z) \ge 1+ \Delta, D(f)\le 1\}$ and $S_{f,2}  := \{ z\in\mathcal Z : \hat{D}_n(z) < 1+ \Delta, D(f)\ge 1+2\Delta\}$, which obey the inclusions $S_{f,j}\subseteq S_f$, for $j=1,2$. Now
\begin{align}
&\sup_{f \in \bar{\mathcal{P}}_s^{pq}(L,M)} Q \Pr_f^{n}( S_f ) 
\ge \sup_{f \in \bar{\mathcal{P}}_s^{pq}(L,M)} \max\left\{ Q \Pr_f^{ n}(S_{f,1}), Q  \Pr_f^{n}(S_{f,2})\right\}\notag\\
&\quad\ge \max\left\{ \max_{\nu\in\mathcal V_m\cup\{0\}}Q \Pr_{f_\nu}^{ n}(S_{f_\nu,1}), \max_{\nu\in\mathcal V_m\cup\{0\}} Q  \Pr_{f_\nu}^{n}(S_{f_\nu,2})\right\}\notag\\
&\quad=
\max\left\{ \ Q \Pr_{0}^{n}\left( \hat{D}_n \ge 1+\Delta\right), 
\max_{\nu\in\mathcal V_m} Q \Pr_{\nu}^{n}\left( \hat{D}_n < 1+\Delta\right)
\right\}\notag\\
&\quad\ge
\frac{1}{2}\left\{
Q \Pr_{0}^{n}\left( \hat{D}_n \ge 1+\Delta\right) 
+
2^{-2^m} \sum_{\nu\in\mathcal V_m} Q \Pr_{\nu}^{ n}\left( \hat{D}_n < 1+\Delta\right) \right\}\notag\\
&\quad\ge
\frac{1}{2} 
\inf_{\text{tests }\phi} \left\{ 
\E_{Q_0^{n}}[\phi]
+
\E_{\bar{Q}^{n}}[1-\phi] \right\} 
=
\frac{1}{2} 
 \left\{
1 - \sup_{\text{tests }\phi}
\E_{Q_0^{n}}[\phi]
-
\E_{\bar{Q}^{n}}[\phi] \right\} \notag\\
&\quad=
\frac12\left(1 - \dtv\left(Q_0^n, \bar{Q}^n\right) \right)\ge \frac12\left(1-\sqrt{D_{KL}\left(Q_0^n, \bar{Q}^n\right)/2}\right), \label{eq:quadTV}
\end{align}
where we have used Pinsker's inequality in the last step.

We abbreviate the regular conditional distributions of $Z_i$ given $Z_1,\dots, Z_{i-1}$ when $X_i$ comes from $\Pr_0$ or $\Pr_\nu$, by $\mathcal L_{Z_i|z_{1:(i-1)}}^{(0)}(dz_i) := \int_{[0,1]} Q_i(dz_i|x_i,z_{1:(i-1)})d\Pr_0(x_i)$ and $\mathcal L_{Z_i|z_{1:(i-1)}}^{(\nu)}(dz_i) := \int_{[0,1]} Q_i(dz_i|x_i,z_{1:(i-1)})d\Pr_\nu(x_i)$, respectively, and we denote the joint distribution of $Z_1,\dots, Z_i$, when $X_1,\dots, X_i$ are i.i.d. from $\Pr_0$, by
$$
\mathcal L_{Z_1,\dots, Z_{i}}^{(0)}(dz_{1:i}) := \mathcal L_{Z_i|z_{1:(i-1)}}^{(0)}(dz_i)\cdots \mathcal L_{Z_2|z_{1}}^{(0)}(dz_2)\mathcal L_{Z_1}^{(0)}(dz_1).
$$
Thus, by the convexity and tensorization property of the KL-divergence, we have
\begin{align}
&D_{KL}\left(Q_0^n, \bar{Q}^n\right)
\le
2^{-2^m}\sum_{\nu\in\mathcal V_m} D_{KL}\left(Q_0^n, Q_\nu^n\right) \notag\\
&\quad=
2^{-2^m}\sum_{\nu\in\mathcal V_m} \sum_{i=1}^n
\int_{\mathcal Z^{i-1}} D_{KL}\left(\mathcal L_{Z_i|z_{1:(i-1)}}^{(0)}, \mathcal L_{Z_i|z_{1:(i-1)}}^{(\nu)}\right)\,d\mathcal L_{Z_1,\dots, Z_{i-1}}^{(0)}.\label{eq:DKLbound1}
\end{align}
Next, for fixed $z_{1:(i-1)}\in\mathcal Z^{i-1}$ (if $i=1$ there is nothing to be fixed here), we bound the KL-divergence by the $\chi^2$-divergence, as in Lemma~2.7 of \citet{Tsybakov09}. Since $Q$ is $\alpha$-sequentially interactive differentially private, Lemma~\ref{lemma:Qdensities} establishes existence of a probability measure $\mu_{z_{1:(i-1)}}$ and a family of $\mu_{z_{1:(i-1)}}$-densities $z_i\mapsto q_i(z_i|x_i,z_{1:(i-1)})$ of $Q_i(\cdot|x_i, z_{1:(i-1)})$, $x_i\in\X$, with
$$
0<q_i(z_i|x_i,z_{1:(i-1)}) \le e^{2\alpha} q_i(z_i|x_i',z_{1:(i-1)}), \quad\forall z_i\in\mathcal Z, \forall x_i,x_i'\in\X.
$$
Abbreviating $q^{(\nu)}_{z_{1:(i-1)}}(z_i) := \int_{[0,1]} q_i(z_i|x_i,z_{1:(i-1)})d\Pr_\nu(x_i)$, we see that
\begin{align}
&D_{KL}\left(\mathcal L_{Z_i|z_{1:(i-1)}}^{(0)}, \mathcal L_{Z_i|z_{1:(i-1)}}^{(\nu)}\right)\notag\\
&\le
\int_{\mathcal Z} \left[\left(\frac{\int_{[0,1]} q_i(z_i|x_i,z_{1:(i-1)})d[\Pr_0 - \Pr_\nu](x_i)}{q^{(\nu)}_{z_{1:(i-1)}}(z_i)} \right)^2 q^{(\nu)}_{z_{1:(i-1)}}(z_i)\right] d\mu_{z_{1:(i-1)}}(z_i) \notag\\
&=
\int_{\mathcal Z} \left[\left(\int_{[0,1]}\left(\frac{ q_i(z_i|x_i,z_{1:(i-1)})}{q^{(0)}_{z_{1:(i-1)}}(z_i)} - c_{\alpha,0} \right)d[\Pr_0 - \Pr_\nu](x_i) \right)^2 \frac{q^{(0)}_{z_{1:(i-1)}}(z_i)}{q^{(\nu)}_{z_{1:(i-1)}}(z_i)}q^{(0)}_{z_{1:(i-1)}}(z_i)\right] d\mu_{z_{1:(i-1)}}(z_i),\label{eq:DKLbound2}
\end{align}
where we choose $c_{\alpha,0} = \frac{1}{2}(e^{2\alpha}+e^{-2\alpha})$. But since
\begin{align*}
e^{-2\alpha} \le \inf_{x_i'\in[0,1]} \frac{ q_i(z_i|x_i,z_{1:(i-1)})}{q_i(z_i|x_i',z_{1:(i-1)})} \le
\frac{ q_i(z_i|x_i,z_{1:(i-1)})}{q^{(0)}_{z_{1:(i-1)}}(z_i)}
\le
\sup_{x_i'\in[0,1]}\frac{ q_i(z_i|x_i,z_{1:(i-1)})}{q_i(z_i|x_i',z_{1:(i-1)})}
\le e^{2\alpha},
\end{align*}
and if we set $c_{\alpha,1} = \frac{1}{2}(e^{2\alpha}-e^{-2\alpha})$, we arrive at
$$
g_i(x_i) := g_i(x_i|z_1,\dots, z_i) := \frac{ q_i(z_i|x_i,z_{1:(i-1)})}{q^{(0)}_{z_{1:(i-1)}}(z_i)} - c_{\alpha,0} \in [-c_{\alpha,1},c_{\alpha,1}].
$$
Next, we consider the average over $\nu$ of the inner squared integral in \eqref{eq:DKLbound2}, i.e.,
\begin{align*}
&2^{-2^m} \sum_{\nu\in\mathcal V_m} \left( \int_{[0,1]} g_i(x_i)[f_0(x_i)-f_\nu(x_i)]\,dx_i\right)^2\\
&\quad=
\delta^22^{-2m(s+\frac{1}{2})} 2^{-2^m} \sum_{\nu\in\mathcal V_m}  \left( \sum_{k=1}^{2^{m}}\nu_{k} \int_{[0,1]} g_i(x_i)\psi_{mk}(x_i)\,dx_i\right)^2\\
&\quad=
\delta^2 2^{-2m(s+\frac{1}{2})} \sum_{k=1}^{2^{m}} \left(\int_{[0,1]} g_i(x_i)\psi_{mk}(x_i)\,dx_i\right)^2\\
&\quad\le
\delta^2 2^{-2m(s+\frac{1}{2})} c_{\alpha,1}^2 \sum_{k=1}^{2^{m}} \|\psi_{mk}\|_{1}^2
\le
\delta^2 2^{-2m(s+\frac{1}{2})} c_{\alpha,1}^2 2^m (2^{-m/2}\|\psi\|_1)^2\\
&\quad= \frac{2^{-m(2s+1)}}{4} (e^{2\alpha}-e^{-2\alpha})^2(\delta\|\psi\|_1)^2.
\end{align*}
Hence, using $\frac{q^{(0)}_{z_{1:(i-1)}}(z_i)}{q^{(\nu)}_{z_{1:(i-1)}}(z_i)} \le \|1/f_\nu\|_\infty\le2$, we see that the average over $\nu$ of \eqref{eq:DKLbound2} is bounded by the same expression multiplied by $2$, and \eqref{eq:DKLbound1} yields
$$
D_{KL}\left(Q_0^n, \bar{Q}^n\right)
\le
n \frac{2^{-m(2s+1)}}{2} (e^{2\alpha}-e^{-2\alpha})^2c_0^2 = \frac12,
$$
where $c_0= \delta\|\psi\|_1$ and $m$ is chosen as $m=\frac{\log(n(e^{2\alpha}-e^{-2\alpha})^2c_0^2)}{2(s+\frac{1}{2})\log2}$.
In view of Markov's inequality and \eqref{eq:quadTV}, this leads to
\begin{align*}
 \sup_{f\in\bar{\P}_s^{pq}(L,M)} \E_{Q\Pr_f^n}\left[\left|\hat{D}_n-D(f)\right|^2 \right]\;&\ge\; 
 \Delta^2 \sup_{f\in\bar{\P}_s^{pq}(L,M)} 
 Q\Pr_f^{n}( |\hat{D}_n - D(f)|^2>\Delta^2 ) \\
 &\ge
 \frac{\Delta^2}{2}\left(1-\sqrt{D_{KL}(Q_0^n,\bar{Q}^n)/2}\right) \\
 &\ge \frac{\Delta^2}{4} 
 =
 \left[ n(e^{2\alpha}-e^{-2\alpha})^2c_0^2\right]^{-\frac{4s}{2s+1}}\frac{\delta^4}{16},
\end{align*}
which finishes the proof. \hfill\qed


\subsection{Auxiliary lemmas for Section~4}

The following lemma is a non-asymptotic version of Lemma~2 in \citet{Butucea08}.

\begin{lemma}\label{lemma:Integrals}
For arbitrary finite constants $A, B, r, s>0$, $a,b\ge0$ and $v>0$, such that $\frac{Arv^r}{a+1}>1$, we have
\begin{eqnarray}
\int_v^\infty u^a e^{-A u^r} du &\le& \frac 1{A r} v^{a + 1-r} e^{-A v^r} \left(1-\frac{a+1}{Arv^r}\right)^{-1}, \quad\text{and} \label{eq1} \\
\int_0^v  u^b e^{B u^s} du &\le& \frac 1{ B s} v^{b+1-s} e^{B v^s} \left(1+\frac{b+1}{Bsv^s}\right)^{-1}.
\label{eq2}
\end{eqnarray}
\end{lemma}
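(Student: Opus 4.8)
The plan is to derive both inequalities from a single integration by parts, rewriting the integrand as a polynomial multiple of an exact derivative and then using an elementary comparison of monomials on the range of integration to reduce everything to solving one linear inequality for the integral itself. (Equivalently one could first substitute $t=u^r$, resp. $t=u^s$, reducing both to incomplete-gamma-type integrals $\int_w^\infty t^{\gamma-1}e^{-At}\,dt$ and $\int_0^w t^{\beta-1}e^{Bt}\,dt$; the steps are the same.)

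For \eqref{eq1}: since $\tfrac{d}{du}e^{-Au^r}=-Aru^{r-1}e^{-Au^r}$, write $u^a e^{-Au^r}=-\tfrac1{Ar}u^{a+1-r}\tfrac{d}{du}e^{-Au^r}$ and integrate by parts over $[v,\infty)$; the boundary term at $+\infty$ vanishes since $A,r>0$, leaving $\int_v^\infty u^a e^{-Au^r}\,du=\tfrac{v^{a+1-r}}{Ar}e^{-Av^r}+\tfrac{a+1-r}{Ar}\int_v^\infty u^{a-r}e^{-Au^r}\,du$. If $a+1-r>0$, bound $u^{a-r}\le v^{-r}u^a$ for $u\ge v$, so the remaining integral is at most $v^{-r}\int_v^\infty u^a e^{-Au^r}\,du$; collecting the two appearances of the integral on one side and dividing — the hypothesis $\tfrac{a+1}{Arv^r}<1$ forces $1-\tfrac{a+1-r}{Arv^r}>0$, making the division legitimate — gives the asserted bound, where one replaces $\tfrac{a+1-r}{Arv^r}$ by the larger $\tfrac{a+1}{Arv^r}$ in the denominator. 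If $a+1-r\le 0$, the correction integral carries a nonpositive coefficient and is simply discarded, and the factor $\big(1-\tfrac{a+1}{Arv^r}\big)^{-1}\ge 1$ covers the slack.

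For \eqref{eq2}: run the mirror-image argument with $u^b e^{Bu^s}=\tfrac1{Bs}u^{b+1-s}\tfrac{d}{du}e^{Bu^s}$, now integrating by parts on $[\varepsilon,v]$ and letting $\varepsilon\downarrow 0$ to kill the lower boundary term, which yields $\int_0^v u^b e^{Bu^s}\,du=\tfrac{v^{b+1-s}}{Bs}e^{Bv^s}-\tfrac{b+1-s}{Bs}\int_0^v u^{b-s}e^{Bu^s}\,du$; the comparison $u^{b-s}\ge v^{-s}u^b$ for $0<u\le v$ now helps, so (for $b+1-s\ge 0$) the correction term is at least $\tfrac{b+1-s}{Bsv^s}\int_0^v u^b e^{Bu^s}\,du$, and moving it across and dividing by the automatically positive factor $1+\tfrac{b+1-s}{Bsv^s}$ produces a bound of the stated form; the degenerate case $b+1-s<0$, where the integrand is genuinely singular at the lower endpoint and the integration by parts fails term-by-term, is dispatched by the crude estimate $\int_0^v u^b e^{Bu^s}\,du\le e^{Bv^s}\int_0^v u^b\,du=\tfrac{v^{b+1}}{b+1}e^{Bv^s}$. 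The only delicate point is bookkeeping rather than ideas: one must track the sign of $a+1-r$ (resp. $b+1-s$) to decide whether the lower-order term is retained and the integral solved for or simply discarded, and check positivity of the coefficient one finally divides by — exactly where the hypothesis $\tfrac{Arv^r}{a+1}>1$ is used. Nothing beyond one integration by parts and the comparison of two monomials is required, so I expect the write-up to be short once the case split is laid out.
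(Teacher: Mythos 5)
Your argument for \eqref{eq1} is correct. It is a mirror image of the paper's: the paper antidifferentiates the monomial ($u^a\mapsto u^{a+1}/(a+1)$) and differentiates the exponential, producing $\int_v^\infty u^{a+r}e^{-Au^r}\,du$, which it bounds from below by $v^r\int_v^\infty u^ae^{-Au^r}\,du$; you antidifferentiate the exponential instead. Your final replacement of $\tfrac{a+1-r}{Arv^r}$ by the larger $\tfrac{a+1}{Arv^r}$ only enlarges the factor $\bigl(1-\cdot\bigr)^{-1}$, so the stated bound follows, and the case split on the sign of $a+1-r$ is handled correctly.

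Your proof of \eqref{eq2}, however, has a gap at exactly the analogous replacement, and it cannot be repaired because \eqref{eq2} is false as stated. Your integration by parts yields, for $b+1-s\ge0$, the true bound $\int_0^v u^be^{Bu^s}\,du\le \tfrac{1}{Bs}v^{b+1-s}e^{Bv^s}\bigl(1+\tfrac{b+1-s}{Bsv^s}\bigr)^{-1}$, but passing from $b+1-s$ to $b+1$ now makes $(1+\cdot)^{-1}$ \emph{smaller}, i.e.\ strengthens the claim, so your bound does not imply the displayed one; ``a bound of the stated form'' is not the stated bound. Indeed \eqref{eq2} fails for $b=0$, $s=B=v=1$: the left-hand side is $e-1\approx1.72$ while the right-hand side is $e/2\approx1.36$. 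The correct statement is the reverse inequality $\ge$; this is what the paper's own ``analogous'' integration by parts actually delivers, since $u^{b+s}\le v^su^b$ on $[0,v]$ bounds the subtracted remainder $\tfrac{Bs}{b+1}\int_0^v u^{b+s}e^{Bu^s}\,du$ from above and hence the integral from below, giving $\int_0^v u^be^{Bu^s}\,du\ge v^{b+1}e^{Bv^s}/(b+1+Bsv^s)$, which is precisely the right-hand side of \eqref{eq2}. So the $\le$ in \eqref{eq2} is an erratum; your weaker bound with $b+1-s$ is the correct upper-bound companion, and \eqref{eq2} is not invoked elsewhere in the paper, so nothing downstream is affected. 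Note also that your crude estimate in the case $b+1-s<0$ only gives $v^{b+1}e^{Bv^s}/(b+1)$, which again exceeds the stated right-hand side.
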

\begin{proof}
To see \eqref{eq1}, simply integrate by parts to get
\begin{align*}
\int_v^\infty u^a e^{-A u^r} du
&= \left[ \frac{u^{a+1}}{a+1} e^{-Au^r}\right]_v^\infty - \int_v^\infty \frac{u^{a+1}}{a+1} e^{-Au^r} (-Aru^{r-1})\,du \\
&= - \frac{v^{a+1}}{a+1} e^{-Av^r} + \frac{Ar}{a+1} \int_v^\infty u^{a+r} e^{-Au^r}\,du\\
&\ge
- \frac{v^{a+1}}{a+1} e^{-Av^r} + \frac{Ar v^r}{a+1} \int_v^\infty u^{a} e^{-Au^r}\,du.
\end{align*}
For $\frac{Ar v^r}{a+1} > 1$, this is equivalent to
\begin{align*}
\int_v^\infty u^a e^{-A u^r} du \le \frac{v^{a+1}}{a+1} e^{-Av^r} \left( \frac{Ar v^r}{a+1}-1\right)^{-1},
\end{align*}
which implies the desired result. \eqref{eq2} follows analogously and without any further restrictions on the constants.
\end{proof}

\begin{lemma}\label{lemma:Qdensities}
Let $\alpha\in(0,\infty)$, $(\X, \mathcal F)$ and $(\mathcal Z, \mathcal G)$ be measurable spaces and $Q$ a Markov kernel from $\X$ to $\mathcal Z$. If $Q(A|x) \le e^\alpha Q(A|x')$, for all $A\in \mathcal G$ and all $x,x'\in\X$, then there exists a probability measure $\mu$ and a family of $\mu$-densities $(q_x)_{x\in\X}$, such that for every $x\in\X$, $dQ(\cdot|x) = q_x\,d\mu$ and $e^{-\alpha}\le q_x(z)\le e^{\alpha}$, for \textbf{all} $z\in\mathcal Z$.
\end{lemma}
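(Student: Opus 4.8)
The plan is to realize $\mu$ concretely as $Q(\cdot\,|x_0)$ for an arbitrarily fixed $x_0\in\X$, which is a probability measure because $Q$ is a Markov kernel, and then to produce the densities $q_x$ by choosing suitable versions of Radon--Nikodym derivatives. First I would note that the comparison hypothesis, applied with $x'=x_0$, gives $Q(A|x)\le e^{\alpha}Q(A|x_0)=e^{\alpha}\mu(A)$ for all $A\in\mathcal G$; in particular $\mu(A)=0$ implies $Q(A|x)=0$, so $Q(\cdot\,|x)\ll\mu$ and a derivative $\tilde q_x:=dQ(\cdot\,|x)/d\mu$ exists for every $x\in\X$.

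The second step is to verify the two-sided bound $\mu$-almost everywhere. From $\int_A\tilde q_x\,d\mu=Q(A|x)\le e^{\alpha}\mu(A)$ for all $A\in\mathcal G$, taking $A=\{\tilde q_x>e^{\alpha}\}$ forces $\mu(A)=0$, hence $\tilde q_x\le e^{\alpha}$ $\mu$-a.e. Applying the hypothesis in the other direction, $Q(A|x_0)\le e^{\alpha}Q(A|x)$, i.e.\ $\mu(A)\le e^{\alpha}\int_A\tilde q_x\,d\mu$, so $\int_A(\tilde q_x-e^{-\alpha})\,d\mu\ge0$ for all $A$; taking $A=\{\tilde q_x<e^{-\alpha}\}$ forces $\mu(A)=0$, hence $\tilde q_x\ge e^{-\alpha}$ $\mu$-a.e.

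The last step upgrades ``a.e.'' to ``everywhere'' by a deliberate choice of version: set
$$
q_x(z):=\max\bigl\{e^{-\alpha},\,\min\{e^{\alpha},\,\tilde q_x(z)\}\bigr\},\qquad z\in\mathcal Z.
$$
By the previous step $q_x=\tilde q_x$ off a $\mu$-null set, so $\int_A q_x\,d\mu=\int_A\tilde q_x\,d\mu=Q(A|x)$ for every $A\in\mathcal G$, i.e.\ $q_x$ is still a $\mu$-density of $Q(\cdot\,|x)$, while $e^{-\alpha}\le q_x(z)\le e^{\alpha}$ now holds at every point $z$ by construction.

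I do not expect a serious obstacle: the argument is elementary, and the only place requiring attention is the realization that the density is a priori defined only up to $\mu$-null sets, so the pointwise (rather than a.e.) two-sided inequality must be engineered by truncating a fixed version, as in the last step. If in addition one needed joint $\mathcal F\otimes\mathcal G$-measurability of $(x,z)\mapsto q_x(z)$ --- which is what is implicitly used whenever such conditional densities are later integrated against functions of $x$ --- one would first invoke the standard fact that a Markov kernel dominated by a fixed measure possesses a jointly measurable density and then truncate that; this is not part of the present statement but costs nothing extra.
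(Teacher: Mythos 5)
Your proof is correct and follows essentially the same route as the paper's: take $\mu=Q(\cdot\,|x_0)$, deduce $Q(\cdot\,|x)\ll\mu$ and the two-sided bound $\mu$-a.e.\ from the hypothesis applied in both directions, then modify the density on the exceptional null set (the paper sets it to $1$ there, you truncate to $[e^{-\alpha},e^{\alpha}]$ --- an immaterial difference). Your closing remark on joint measurability is a reasonable aside but, as you note, not part of the statement.
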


\begin{proof}
Let $x_0\in\X$ and $\mu := Q(\cdot|x_0)$. For a fixed $x\in\X$, we have $Q(\cdot|x) \ll \mu$, and we write $\tilde{q}_x$ for a corresponding density. Since $\int_A \tilde{q}_x\,d\mu = Q(A|x) \le e^\alpha Q(A|x_0) = \int_A e^\alpha\,d\mu$ and $Q(A|x) \ge e^{-\alpha} Q(A|x_0) = \int_A e^{-\alpha}\,d\mu$, for all $A\in\mathcal G$, we have $e^{-\alpha}\le \tilde{q}_x \le e^\alpha$, $\mu$-almost surely. Let $N_x\in\mathcal G$ be the corresponding $\mu$-null set. Then define $q_x(z) = \tilde{q}_x(z)$, if $z\in N_x^c$, and set $q_x(z) = 1$, otherwise. Thus, $q_x$ is still a $\mu$-density of $Q(\cdot|x)$ with $e^{-\alpha}\le q_x(z) \le e^\alpha$.
\end{proof}


\section{Proofs of Section~6 on adaptive estimation}

\subsection{Proof of Theorem~6.1 (non-interactive protocol)}

Let $\Pr_n$ denote the empirical measure with respect to $X_1,...,X_n$. 
We decompose our estimator into the non-private version and new terms $A_J$ and $B_J$ due to privacy as follows:
\begin{align*}
\hat D_J &- 2 \left( \frac 1n \sum_{i=1}^n f(X_i) -D  \right) - D \\
&=  U_n(H_J) - 2 (\mathbb{P}_n - \mathbb{P})(f- f_J) - \|f- f_J\|_2^2 + A_J + B_J,
\end{align*}
where $f_J(x) = \sum_{j=-1}^{J-1} \sum_{k=0}^{(1\lor 2^j)-1} 
\beta_{jk} \psi_{jk}(x)$ is the projection of $f$ at the resolution level $J$, $U_n(H_J)$ is the $U$-statistic with kernel $H_J$ given by
\begin{align*}
H_J(x,y) &= \sum_{j=-1}^{J-1} \sum_{k=0}^{(1\lor 2^j)-1} (\psi_{jk}(x) - \beta_{jk})(\psi_{jk}(y) - \beta_{jk}),\\ 
U_n(H_J)&=\frac{1}{n(n-1)}\sum_{\substack{i,h=1\\ i\not=h}}^nH_J(X_i,X_h)
\end{align*}
and, finally,
\begin{align*}
A_J &= \frac 2{n(n-1)} \sum_{i \ne h} \sum_{j=-1}^{J-1} \sum_{k=0}^{(1\lor 2^j)-1} \sigma_j \frac \sigma \alpha W_{ijk} \psi_{jk}(X_h), \\
B_J &= \frac 1{n(n-1)} \sum_{i \ne h} \sum_{j=-1}^{J-1} \sum_{k=0}^{(1\lor 2^j)-1} \sigma_j^2 \frac{\sigma^2}{\alpha^2} W_{ijk} W_{hjk}.
\end{align*}
We decompose $pen^{(NI)}(J)$ into the sum of $ pen(J)$, $pen_A(J)$ and $pen_B(J)$, for all $J$ in $\mathcal{J}$, that we specify below. Let us denote
\begin{align*}
    V_J &= U_n(H_J) - 2 (\mathbb{P}_n - \mathbb{P})(f- f_J) - \|f- f_J\|_2^2  - pen(J) \\
    & \quad + A_J - pen_A(J)\\
    &\quad + B_J - pen_B(J).
\end{align*}
By the definition of $\hat D^{(NI)}$ we get
$$
\hat D^{(NI)} - 2 \left( \frac 1n \sum_{i=1}^n f(X_i) -D  \right) - D = \sup_{J \in \mathcal{J}} V_J.
$$
Using
$$
\left| \sup_{J \in \mathcal{J}} V_J\right| = \sup_{J \in \mathcal{J}} (V_J)_+ \vee \inf_{J \in \mathcal{J}} (V_J)_-,
$$
we have that
$$
\E \left[(\sup_{J \in \mathcal{J}} V_J)^2 \right] \leq \sum_{J \in \mathcal{J}} \E[(V_J)_+^2] + \inf_{J \in \mathcal{J}} \E[(V_J)_-^2].
$$
We start with bounding the first term on the right-hand side in the former inequality. 
Following \cite{Laurent2005}, we similarly use the 
concentration inequality for $U$-statistics in \cite{HouRey2003} and get 
\begin{align*}
    \Pr &\left( |U_n(H_J)| \geq \frac C{n-1} \left[\sqrt{2(J+1) M 2^J n(n-1)} \sqrt{t} + 8(J+1) Mn t + 2(J+1) 2^{J+2} t^2 \right]
    \right)\\
    & \leq 5.6 \exp(-t),
\end{align*}
which, combined with the deviation of the empirical process part, provides the penalty
\begin{align*}
pen(J) &= \frac {\kappa_0} n \left( \sqrt{M (J+1) 2^J \log(2^J+1) } + M(J+1) \log(2^J +1) \right. \\
& \left. + \frac{(J+1)2^J \log^2(2^J +1)}{n} \right).
\end{align*}
with $\kappa_0$ as specified in \cite{Laurent2005}.
We obtain 
\begin{align*}
    \sum_{J \in \mathcal{J}}\E &\left[ \left(\Big[U_n(H_J) - 2 (\mathbb{P}_n - \mathbb{P})(f- f_J) - \|f- f_J\|_2^2 - pen(J)\Big]_+ \right)^2\right] \lesssim \frac{1}{n}
    \end{align*}
    and
    \begin{align*}
    \E &\left[ \left( \Big[U_n(H_J) - 2 (\mathbb{P}_n - \mathbb{P})(f- f_J) - \|f- f_J\|_2^2 - pen(J)\Big]_- \right)^2\right]\\
    &\leq C_0 \left( \|f-f_J\|_2^4 + pen^2(J)\right)
\end{align*}
for some absolute constant $C_0>0$.

Let us deal now with the additional terms that occur due to privacy in our setup.

{\bf First additional term:}
\medskip
\noindent
Recall that
\begin{align*}
A_J&=\frac{1}{n} \sum_{i=1}^n  \sum_{j=-1}^{J-1} \sum_{k=0}^{(1\vee 2^j)-1} W_{ij k} \sigma_j \frac{\sigma}{\alpha} \Xi_{ijk}, \quad \text{with } \Xi_{ijk}= \frac{1}{n-1} \sum_{h\not=i} \psi_{j k}(X_h).
\end{align*}
The aim is to specify some potentially random and ideally small real numbers $y_J$, $J\in\mathcal{J}$, such that
$$
\sum_{J\in \mathcal{J}} \E [(A_J-y_J)_+^2]  \lesssim \frac{1}{n \alpha^2}.
 $$
 For any $y=y((X_i))>0$, 
\begin{align*}
(\Delta_A) & :=\E([A_J-y]_+^2) =\E\E\big[[A_J-y]_+^2\big\arrowvert(X_i)\big]\\
&= \E {\int_0^\infty t\Pr([A_J-y]_+ > t\arrowvert(X_h)) d t} . 
\end{align*}
Due to the independence of $(X_i)$ and $(W_{ijk})$,  we use Bernstein's inequality for sums of independent Laplace random variables (cf. \cite{BouLugMas13}) to get
\begin{flalign} \label{eq:Bernstein}
\Pr\big(A_J>\eta\big \arrowvert  (X_i)\big) 
\le 
\exp\Bigg( 
-\frac{\eta^2}{2} \frac{1}{
\underbrace{\frac{2}{n} \frac1n \sum_{i=1}^n \sum_{j,k} \sigma_j^2\frac{\sigma^2}{\alpha^2} \Xi_{ijk}^2}_{=:C_1} 
+ \eta \underbrace{\frac{\max_{i,j,k} \sigma_j \sigma|\Xi_{ijk}|}{n\alpha}}_{\substack{=:C_2\\ }  }} \Bigg)
\end{flalign}
where we drop the dependence on $J$ in $C_1 = C_{1,J}$ and $C_2 = C_{2,J}$.
Here and in what follows, the double sum over $j,k$ is abbreviated by $\sum_{j,k}$, dropping in particular its dependence on $J$ if this is clear from the context. 

Hence, using Lemma \ref{lemma: bound},
{\allowdisplaybreaks
\begin{align*}
 {(\Delta_A)} &\leq   
\E \int_{y}^\infty t \Pr (A_J>t\mid (X_i)) d t\\
&\leq  \E \int_{y} ^\infty t \exp \Big( -\frac{t^2}{2} \frac{1}{C_1+t C_2}\Big) d t
\\*
&\leq  \E \bigg\{ 
\underbrace{2C_1 \exp \Big( -\frac{y^2}{4C_1}\Big)}_{{\text{(I)}}}
+ \underbrace{4 C_2\Big[ y \vee \frac{C_1}{C_2}\Big] \exp\Big(-\frac{y \vee \frac{C_1}{C_2}}{4 C_2} \Big) }_{{\text{(II)}}} + \\
&\quad\quad
 + \underbrace{16 C_2^2 \exp\Big(-\frac{y \vee \frac{C_1}{C_2}}{4 C_2} \Big) }_{{\text{(III)}}}
\bigg\}.
\end{align*}
} 

Now define 
\begin{align}\label{eq: y_J}
y_J := 8 \sqrt{C_1}\,  \log[2^{4J+1}]
\end{align}
and insert it into (I), (II) and (III).
Then 
\begin{align*}
\text{(I)}
&= 2C_1 \exp\Big( -\frac{y_J^2}{4 C_1}\Big) 
\le 2 C_1 \exp\Big(-4\log[2^{4J+1}]\Big)
 = 2C_1 \frac{1}{2^{(4J+1)4}}.
\end{align*}
Concerning (II), note that if $y_J>\frac{C_1}{C_2}$, then
\begin{align*}
\frac{[y_J\vee\frac{C_1}{C_2}]}{4C_2} 
&= 2 \frac{\sqrt{C_1}}{C_2} \log[2^{4J+1}]
\ge 2 \log \big[2^{4J+1}\big]
\end{align*}
because $\sqrt{C_1}\geq C_2$. 
If $\displaystyle y_J\le\frac{C_1}{C_2}$, then 
\begin{align*}
&8 \sqrt{C_1}\, \log\big[2^{4J+1}\Big]\leq \frac{C_1}{C_2}\
 \Longleftrightarrow \ \frac{\sqrt{C_1}}{4C_2} \ge 2 \log[ 2^{4J+1}].
 \end{align*}
 Consequently,
 \begin{flalign}
\label{eq:5-st25-*}
 \quad \frac{\big[ y_J\vee \frac{C_1}{C_2}\big]}{4 C_2} &\ge 2 \log \Big[ 2^{4J+1} \Big].&&
\end{flalign}
 Finally, $C_2\sqrt{C_1}\leq C_1$ implies 
 $$
 4C_2 \Big[y_J\vee \frac{C_1}{C_2}\Big] \leq 8 C_1 \log[2^{4J+1}].
 $$
 Summarizing,  
 \begin{align*}
\text{(II)} 
&= 4C_2 [y_J\vee \frac{C_1}{C_2}]\exp\Big( -\frac{[y_J\vee \frac{C_1}{C_2}]^2}{4C_2}\Big)\\
&\le8 C_1 \log \Big[ 2^{4J+1} \Big]\exp\Big( -2 \log\big(2^{4J+1}\big) \Big).
\end{align*}
Again by \eqref{eq:5-st25-*} and the inequality $C_2^2\leq C_1$,
\begin{align*}
\text{(III)} &= 16 C_2^2 \exp \Big( -\frac{[y_J\vee \frac{C_1}{C_2}]}{4C_2}\Big)&&
\\
&\leq 16 C_1 \exp\Big( -2 \log (2^{4J+1}) \Big).
\end{align*}
This gives,
\begin{flalign*}
\quad  (\Delta_A)
&\le  \E \int_{y_J}^\infty t \Pr (X>t\mid (X_i))d t && \\
&\le  \E \bigg[ 
2 C_1\cdot \frac{1}{2^{(4J+1)4}} 
+  \frac{32 C_1 \log [2^{4J+1}] }{2^{2(4J+1)}} 
+ \frac{16 C_1 }{2^{2(4J+1)}} 
\bigg]
\end{flalign*}
Now we are in the position to evaluate the expected value. Because of
\begin{align*}
\E \Big| \frac{1}{n-1} \sum_{h\not=i} \psi_{jk}(X_h)\big) \Big|^2 = \Big(1-\frac{1}{n-1}\Big)\beta_{jk}^2+\frac{1}{n-1}\E\big[\psi_{jk}(X_1)^2\big],
\end{align*}
we get
\begin{align*}
\E C_1 &\leq  \frac{2 \sigma^2}{n\alpha^2} \sum_{j,k} 2^{j} (1+j)^{2a}\bigg\{\beta_{jk}^2+\frac{\E\big[\psi_{jk}(X_1)^2\big]}{n-1}\bigg\} \\
& \lesssim \frac{1}{n \alpha^2} \Big(J^{2a+1}2^{J}+\frac{1}{n-1}2^{2J}\Big) .
\end{align*}
Therefore, with $y_J$  in \eqref{eq: y_J},  $\E [(A_J-y_J)_+^2]  \lesssim \frac{1}{n \alpha^2} \frac{1}{2^J}$. As a consequence,
$$
\sum_{J\in \mathcal{J}} \E [(A_J-y_J)_+^2]  \lesssim \frac{1}{n \alpha^2}.
 $$
Finally, with $\bar y_J: = 8 \sqrt{\E C_1}\,  \log[2^{4J+1}]$ and 
$$
pen_A(J) = 8  \, \left(\frac{J^{2a+1} 2^J}{n \alpha^2} + \frac{2^{2J}}{n^2 \alpha^2} \right)^{1/2}\,  \log[2^{4J+1}],
$$ 
such that $\bar y_J \leq pen_A(J)$, we arrive at
\begin{align}
\sum_{J\in \mathcal{J}} \E [(A_J-pen_A(J))_+^2] 
&\leq 2  \sum_{J\in \mathcal{J}} \E [(A_J-y_J)_+^2]  + 2 \sum_{J\in \mathcal{J}} \E [(\bar y_J-y_J)^2] \nonumber \\
&\lesssim \frac{1}{n \alpha^2} + \sum_{J\in \mathcal{J}} \log^2[2^{4J+1}] (\Var(C_{1}))^{1/2}.\label{boundC1}
\end{align}
In order to bound $\Var (C_1)$, observe that
\begin{align*}
    C_{1} & = 
    \frac{2}{n} \frac1n \sum_{i=1}^n \sum_{j,k} \sigma_j^2\frac{\sigma^2}{\alpha^2} \Xi_{ijk}^2\\
    &=   \frac{2}{n} \frac1n \sum_{i=1}^n \sum_{j,k} \sigma_j^2\frac{\sigma^2}{\alpha^2} \frac 1{(n-1)^2} \sum_{h,h' \ne i} \psi_{jk}(X_h)\psi_{jk}(X_{h'}) \\
    & = \frac{2 \sigma^2}{n^2(n-1)\alpha^2} \sum_{h=1}^n \sum_{j,k}\sigma_j^2 \psi^2_{jk}(X_h)
    +   \frac{2 \sigma^2(n-2)}{n^2(n-1)^2\alpha^2} \sum_{h\ne h'}^n \sum_{j,k}\sigma_j^2 \psi_{jk}(X_h)\psi_{jk}(X_{h'}) \\
    & = : C_{1,1}+C_{1,2}.
\end{align*}
Note that $\Var(C_{1}) \leq 2 \Var(C_{1,1}) +2 \Var(C_{1,2}) $. Therefore, 
\begin{align*}
    \Var(C_{1,1}) & \asymp \frac{1}{n^5 \alpha^4} \Var \left( \sum_{j,k} \sigma_j^2 \psi_{j,k}^2(X_1)
    \right)\\
    &\lesssim \frac{1}{n^5 \alpha^4} \E  \left(
     \sum_{j,j',k,k'}\sigma_j^2 \sigma_{j'}^2 \psi_{j,k}^2(X_1) \psi_{j',k'}^2(X_1)
    \right)^2 \\
    &\lesssim \frac{1}{n^5 \alpha^4} 
    \sum_{j\geq j',k}\sigma_j^2 \sigma_{j'}^2 \sum_{k'} \int \psi_{j,k}^2 \psi_{j',k'}^2 f\\
    &\lesssim \frac{J^{4a}}{n^5 \alpha^4} 
    \sum_{j\geq j',k}2^{2(j+j')} \sum_{k'} \int I_{j,k} I_{j',k'} \cdot M,
\end{align*}
where we denote by $I_{j,k} = [\frac{k}{2^j}, \frac{k+1}{2^j}]$. We use repeatedly that for any, $j,\,k$ and $j'$ such that $j \geq j'$, there are only $2^{j-j'}$ values of $k'$ such that $I_{j,k} I_{j',k'} \ne 0$. For these values of $k'$ the length of the interval $I_{j,k} I_{j',k'}$ is $2^{-j}$. Therefore 
$\sum_{k'} \int I_{j,k} I_{j',k'} \leq 2^{j-j'}\cdot 2^{-j}$.
We obtain 
$$
\Var(C_{1,1}) \lesssim  \frac{J^{4a}}{n^5 \alpha^4} \sum_{j\geq j',k} 2^{2j +j'}
 \lesssim  \frac{J^{4a}}{n^5 \alpha^4} \sum_{j} 2^{4j} \asymp \frac{J^{4a} \cdot 2^{4J}}{n^5 \alpha^4}.
$$
We plug this in \eqref{boundC1} to get
\begin{align*}
    &\sum_{J\in \mathcal{J}} \log^2[2^{4J+1}] (\Var(C_{1,1}))^{1/2}\\
    &\lesssim \sum_{J\in \mathcal{J}} (4J+1)^2  \frac{J^{2a} \cdot 2^{2J}}{n^{5/2}\alpha^2}
    \lesssim \frac 1n \cdot \frac{(J_{max})^{2+2a} 2^{2J_{max}}}{n^{3/2} \alpha^2} \lesssim \frac 1n,
\end{align*}
where we use that $2^{J_{max}} \asymp (n \alpha^2)^{2/3} \log^{-\kappa /3}(n \alpha^2)$ for some $\kappa>4(a+1) >0$.

Similarly, for the term appearing in $C_{1,2} - \E(C_{1,2})$,
\begin{align*}
   \E\biggl( \sum_{h\ne h'}^n& \sum_{j,k}\sigma_j^2 (\psi_{jk}(X_h)\psi_{jk}(X_{h'}) - \beta_{jk}^2)
    \biggr)^2\\
    & = n(n-1) \E \left(\sum_{j,k}\sigma_j^2 (\psi_{jk}(X_1)\psi_{jk}(X_{2}) - \beta_{jk}^2)\right)^2 \\
    &\ \ \  + n(n-1)(n-2)\E  \left(\sum_{j,k}\sigma_j^2 (\psi_{jk}(X_1)\psi_{jk}(X_{2}) - \beta_{jk}^2)\right) \\
    & \ \ \ \ \ \ \cdot \left(\sum_{j',k'}\sigma_j^2 (\psi_{j'k'}(X_1)\psi_{j'k'}(X_{3}) - \beta_{j'k'}^2)\right) = : T_1 + T_2.
\end{align*}
We bound from above $\Var(C_{1,2}) \lesssim \frac{1}{n^6\alpha^4}( T_1 + |T_2|)$, with \begin{allowdisplaybreaks}
\begin{align*}
    \frac{1}{n^6\alpha^4}T_1 & \lesssim \frac{1}{n^6 \alpha^4} \Var\left(\sum_{h\ne h'}^n \sum_{j,k}\sigma_j^2 \psi_{jk}(X_h)\psi_{jk}(X_{h'})\right)\\
    & \lesssim \frac{1}{n^4 \alpha^4} \Var\left(\sum_{j,k}\sigma_j^2 \psi_{jk}(X_1)\psi_{jk}(X_{2}) \right)\\
    & \lesssim \frac{1}{n^4 \alpha^4} \E \left[
    \left( \sum_{j,k}\sigma_j^2 \psi_{jk}(X_1)\psi_{jk}(X_{2})\right)^2
    \right]\\
    & \lesssim \frac{1}{n^4 \alpha^4} \sum_{j,j',k,k'} \sigma_j^2 \sigma_{j'}^2 \E^2[\psi_{jk}(X_1) \psi_{j'k'}(X_1)]\\
    &\lesssim \frac{1}{n^4 \alpha^4} \sum_{j,j',k} \sigma_j^2 \sigma_{j'}^2 2^{j+j'}\sum_{k'}\bigg(\int I_{jk} I_{j'k'}\bigg)^2 M^2\\
    &\lesssim  \frac{J^{4a}}{n^4 \alpha^4} \sum_{j \geq j', k} 2^{2(j+j')} \cdot 2^{j-j'} 2^{-2j} M^2 \\
    & \lesssim  \frac{J^{4a}}{n^4 \alpha^4} \sum_{j\geq j'} 2^j \cdot 2^{j+j'}\lesssim
     \frac{J^{4a}}{n^4 \alpha^4} 2^{3J}.
\end{align*}
Therefore, for all $J$ in $\mathcal{J}$, we get
\begin{equation}\label{boundT1}
    \frac 1{n^3\alpha^2} \sqrt{T_1} \lesssim \frac 1n \cdot \frac{J^{2a} \cdot 2^{3J/2}}{n \alpha^2} .
\end{equation}
Now,
\begin{align*}
    \frac{1}{n^6\alpha^4}|T_2| & \lesssim 
    \frac{J^{4a}}{n^3\alpha^4} \left|\sum_{j,k,j',k'} 2^{j+j'} \left( \E[\psi_{jk}(X_1)\psi_{j'k'}(X_1)\psi_{jk}(X_2)\psi_{j'k'}(X_3)]-\beta_{jk}^2\beta_{j'k'}^2 
    \right)\right|\\
    & = \frac{J^{4a}}{n^3\alpha^4} \left| \sum_{j,k,j',k'} 2^{j+j'}  \int \psi_{jk}\psi_{j'k'}f \cdot  \beta_{jk}\beta_{j'k'} - \bigg(\sum_j 2^j \|\beta_{j \cdot }\|_2^2\bigg)^2
    \right|\\
    & \lesssim \frac{J^{4a}}{n^3\alpha^4} \cdot \left(\sum_{j \geq j',k} 2^{j+j'} \sum_{k'} 2^{\frac 12 (j+j')}\int I_{jk} I_{j'k'}M \cdot  |\beta_{jk}\beta_{j'k'}|
    + 2^{2J} M^2 \right)\\
    & \lesssim \frac{J^{4a}}{n^3\alpha^4} \cdot \left(\sum_{j \geq j',k} 2^{\frac 32(j+j')} |\beta_{jk}|\bigg(\sum_{k'} \bigg[\int I_{jk} I_{j'k'}\bigg]^2 \bigg)^{1/2} \|\beta_{j'\cdot}\|_2
    + 2^{2J} M^2 \right)\\
    & \lesssim \frac{J^{4a}}{n^3\alpha^4} \cdot \left(\sum_{j \geq j',k} 2^{\frac 32(j+j')} |\beta_{jk}| 2^{-\frac 12 (j+j')} 
    + 2^{2J} M^2 \right)\\
    &\lesssim \frac{J^{4a}}{n^3\alpha^4} \cdot \left(\sum_{j \geq j'} 2^{j+j'} 2^{j/2} \|\beta_{j\cdot}\|_2
    + 2^{2J} M^2 \right) \lesssim \frac{ 2^{3J}}{n^3\alpha^4}.
\end{align*}
Thus
\begin{equation}
    \label{boundT2}
    \frac 1{n^3\alpha^2} \sqrt{T_2} \lesssim \frac 1n \cdot \frac{2^{3J/2}}{\sqrt{n} \alpha^2} .
\end{equation}
Combining \eqref{boundT1} and \eqref{boundT2}, we get that for any $\kappa \geq 4(a +1)$:
\begin{align*}
    &\sum_{J\in \mathcal{J}} \log^2[2^{4J+1}] (\Var(C_{1,2}))^{1/2}\\
    &\lesssim \frac{1}{n}\sum_{J\in \mathcal{J}} (4J+1)^2  \frac{2^{3J/2}}{\sqrt{n}\alpha^2}
    \lesssim \frac 1n \cdot \frac{(J_{max})^{2+2a} 2^{3J_{max}/2}}{\sqrt{n} \alpha^2} \lesssim \frac 1n.
\end{align*}
\end{allowdisplaybreaks}

Indeed, remember that $2^{3J_{max}}/(n\alpha^2)^2 \lesssim 1/\log^{\kappa }(n \alpha^2)$. 
Plugging the bounds for $C_{1,1}$ and $C_{1,2}$ into \eqref{boundC1}, we get that
$$
\sum_{J\in \mathcal{J}} \E [(A_J-pen_A(J))_+^2] \lesssim \frac 1{n\alpha^2}.
$$
We conclude for the negative part that
$$
\E [(A_J-pen_A(J))_-^2] \leq  \E A_J^2+  pen_A^2(J) \leq 2 pen_A^2(J).
$$


\bigskip
{\bf Second additional term:}
\noindent
Recall now
\begin{align*}
B_J&=\frac{1}{n} \sum_{i=1}^n  \sum_{j=-1}^{J-1} \sum_{k=0}^{(1\vee 2^j)-1} W_{ij k} \sigma_j^2 \frac{\sigma^2}{\alpha^2} \Big[ \frac{1}{n-1} \sum_{h\not=i} W_{hj k} \Big],
\end{align*}
The aim is to specify some ideally small real numbers $y_J$, $J\in\mathcal{J}$, such that
$$
\sum_{J\in \mathcal{J}} \E [(B_J-y_J)_+^2]  \lesssim_{\log} \frac{1}{n^2\alpha^4}.
 $$
 For any $y>0$, 
 \begin{align*}
(\Delta_B):=\E([B_J-y]_+^2) &= \int_0^\infty t\Pr([B_J-y]_+ > t) d t  
\le {\int_y^\infty u\Pr(X>u) d u}.
\end{align*}
Let $(W_{ij k}')$ be an independent copy of $(W_{ij k})$ and 
$$
B_J'=\frac1n \sum_{i=1}^n \sum_{j,k} W_{ij k} \sigma_j^2 \frac{\sigma^2}{\alpha^2}\Big[ \frac{1}{n-1} \sum_{h\not=i} W_{hj k}'\Big].
$$
Then, with the notation $\zeta_{ijk} = \frac{1}{n-1} \sum_{h\not=i} W_{hj k}'$,
\begin{flalign*}
\Pr\big(B_J'>\eta\big \arrowvert  (W_{hjk}')_{hjk}\big) 
\le 
\exp\Bigg( 
-\frac{\eta^2}{2} \frac{1}{
\underbrace{\frac{2}{n} \frac1n \sum_{i=1}^n \sum_{j,k} \sigma_j^4 \frac{\sigma^4}{\alpha^4}\zeta_{ijk}^2}_{=:C_1} 
+ \eta \underbrace{\frac{\max_{i,j,k} \sigma_j^2\sigma^2|\zeta_{ijk}|}{n\alpha^2}}_{\substack{=:C_2\\ }  }} \Bigg).
\end{flalign*}
Here and in what follows, the double sum over $j,k$ is abbreviated by $\sum_{j,k}$, dropping in particular its dependence on $J$ if this is clear from the context. Note that $C_1$ and $C_2$ are random and equally depend on $J$.

By the decoupling inequality of \cite{delaPena95}, 
there exists some universal constant $C>0$ with
{\allowdisplaybreaks
\begin{align*}
{(\Delta_B)} &\le C\int_y^\infty u \Pr \Big(B_J'>\frac{u}{C} \Big) d u, \qquad\qquad \frac{u}{C}=t
\\*
&= C^3 \int_{y/C}^\infty t \, \Pr(B_J' > t) d t
\\
&= C^3 \int_{y/C}^\infty t \,\E\Pr(B_J'>t\mid W')d t
\\
&= C^3 \E \int_{y/C}^\infty t \Pr (B_J'>t\mid W') d t
\\
&\leq C^3 \E \int_{y/C=:y'} ^\infty t \exp \Big( -\frac{t^2}{2} \frac{1}{C_1+tC_2}\Big) d t
\\*
&\leq C^3 \E \bigg\{ 
\underbrace{2C_1 \exp \Big( -\frac{y'^2}{4C_1}\Big)}_{{\text{(I)}}}
+ \underbrace{4 C_2\Big[ y' \vee \frac{C_1}{C_2}\Big] \exp\Big(-\frac{y' \vee \frac{C_1}{C_2}}{4 C_2} \Big) }_{{\text{(II)}}} +\\
&\hspace{2cm}
 + \underbrace{16 C_2^2 \exp\Big(-\frac{y' \vee \frac{C_1}{C_2}}{4 C_2} \Big) }_{{\text{(III)}}}
\bigg\},
\end{align*}
}where we have also used Lemma \ref{lemma: bound}. 

As we have used Fubini's theorem in the third equation, $y$ is not allowed to depend on $(W')$. Inspired by the choice for the previously treated  first term, define
\begin{align*}
y' := 8 \sqrt{\E C_1}\,  \log[2^{4J+1}].
\end{align*}
In order to bound (I), (II) and (III), we need concentration of $C_1$ around $\E C_1$ (cf. Lemma~\ref{lemma:C1tail}).

We evaluate the expectation. Because of
\begin{align*}
\E \Big| \frac{1}{n-1} \sum_{h\not=i} W_{h j k}' \Big|^2 = \frac{2}{n-1},
\end{align*}
we get
\begin{align*}
\E C_1 &= \frac{2 \sigma^4}{n\alpha^4} \sum_{j,k} \sigma_j^2 \frac{2}{n-1}
 \asymp \frac{1}{n^2 \alpha^4} J^{4a}2^{3J}.
\end{align*}

Now, we are in the position to continue with bounding (I), (II) and (III). 

\medskip
\underline{Upper bound of $\E$(I):}
Let us denote by $V = C_1/ \E C_1$. Then
\begin{align*}
\E\text{(I)}
&= 2 \E C_1 \cdot \E \bigg[V \exp\Big( -\frac{16}{V} \cdot  \log^2[2^{4J+1}]\Big)\bigg] .
\end{align*}
On the set $\{ V\leq n^{-1/3} \}$, we get an easy upper bound 
$$
2  \E C_1 \cdot n^{-1/3} \exp(-16 n^{1/3}  \log^2[2^{4J+1}]) \leq 2  \E C_1 \cdot \frac 1{2^{4J}}, 
$$
while on the set $\{V>n^{-1/3} \}$ we bound the exponential term by 1 and write
\begin{align*}
& 2 \E C_1 \cdot \sum_{k \in \N}\E \bigg[V \exp\Big( -\frac{16}{V} \cdot  \log^2[2^{4J+1}]\Big)\cdot \mathds{1} (\frac{1}{n^{1/3}}\vee k^5 < V \leq (k+1)^5 )\bigg] \\
&\lesssim \E C_1 \cdot \sum_{k\geq 1} (k+1)^5 \bigg\{ 2^{J+2} \exp\left( -2^J k^5  \right) + \frac{1}{k^{10} n^3 2^J} \bigg\}\\
&\lesssim \E C_1 \cdot  \bigg\{\sum_{k \geq 1 } \frac{((k+1)2^J)^5}{2^{5J}} 2^{J} \exp\left( -2^J k \right) + \frac{1}{n^3 2^J} \bigg\}.
\end{align*}
Finally, \begin{align*}
    \E(I)& \lesssim \E C_1 \cdot \bigg\{ \frac{1}{2^{4J}}   + \frac{1}{n^3 2^J} \bigg\}.
\end{align*}

\medskip
\noindent
{\underline{Upper bound of $\E$(II):}} 
If $y' > C_1/C_2$, then $\sqrt{C_1}\geq C_2$ implies 
$$
\frac{8 \sqrt{\E C_1}\log(2^{4J+1})}{4 C_2}\geq 2 \frac{\sqrt{\E C_1}}{\sqrt{C_1}}\log(2^{4J+1}).
$$
If $y'\leq C_1/C_2$, we conclude that
$$
\frac{\sqrt{C_1}}{4 C_2}\geq 2 \log(2^{4J+1})\frac{\sqrt{\E C_1}}{\sqrt{C_1}}.
$$
Put the two cases together to get
 \begin{flalign}
\label{eq:5-st25-**}
\quad \frac{\big[ y'\vee \frac{C_1}{C_2}\big]}{4 C_2} &\ge 2 \bigg(\frac{\sqrt{\E C_1}}{\sqrt{C_1}} + \frac{\E C_1}{C_1}\bigg) \log \Big[ 2^{4J+1} \Big]&&
\end{flalign}
 Finally, $C_2\sqrt{C_1}\leq C_1$ implies 
 $$
 4C_2 \Big[y'\vee \frac{C_1}{C_2}\Big] \leq 8 (C_1+\E C_1) \log[2^{4J+1}].
 $$
 Summarizing,  
 \begin{align*}
\text{(II)} 
&= 4C_2 [y'\vee \frac{C_1}{C_2}]\exp\Big( -\frac{[y'\vee \frac{C_1}{C_2}]^2}{4C_2}\Big)\\
&\le 8 (C_1 +\E C_1)\log \Big[ 2^{4J+1} \Big]\exp\bigg( -2 \bigg(\frac{\sqrt{\E C_1}}{\sqrt{C_1}} + \frac{\E C_1}{C_1}\bigg) \log\big(2^{4J+1}\big) \bigg).
\end{align*}
Therefore, using again the notation $V=C_1/\E C_1$ and proceeding as before, we get:
\begin{align*}
\E\text{(II)}&\lesssim \E C_1\log \Big[ 2^{4J+1}\Big]\E \left[(V+1) \cdot \exp \left( -2 \big( \frac{1}{\sqrt{V}} + \frac{1}{V}\big) \log [2^{4J+1}] \right) \right]\\
&\lesssim \E C_1 \cdot \log \Big[ 2^{4J+1}\Big] \cdot \bigg\{ \frac{1}{2^{4J}}+\frac{1}{n^3 2^J}\bigg\}.
\end{align*}

\medskip
\noindent
{\underline{Upper bound of $\E$(III):}} 
Again by \eqref{eq:5-st25-**} and the inequality $C_2^2\leq C_1$,
\begin{align*}
\text{(III)} &= 16 C_2^2 \exp \Big( -\frac{[y'\vee \frac{C_1}{C_2}]}{4C_2}\Big)&&
\\
&\leq 16 C_1 \exp\Big( -2 \bigg(\frac{\sqrt{\E C_1}}{\sqrt{C_1}} + \frac{\E C_1}{C_1}\bigg) \log (2^{4J+1}) \Big).
\end{align*}
The same arguments as used for $\E$(II) reveal
$$
\E\text{(III)}\lesssim \E C_1 
\cdot \log \Big[ 2^{4J+1}\Big] \cdot \bigg\{ \frac{1}{2^{4J}}+\frac{1}{n^3 2^J}\bigg\}.
$$
Finally, inserting $y=Cy'=C8 \sqrt{\E C_1}\,  \log[2^{4J+1}]$ into $(\Delta_B)$, where $C$ denotes the universal constant of the decoupling inequality of \cite{delaPena95}, 
 we obtain
\begin{flalign*}
\quad  (\Delta_B)
&\le C^3 \E \int_{y'}^\infty t \Pr (X'>t\mid W')d t && \\
&\lesssim \E C_1 \log \Big[ 2^{4J+1}\Big]\bigg\{ \frac{1}{2^{4J}}+\frac{1}{n^3 2^J}\bigg\}.
\end{flalign*}
Therefore, with $y_J:=C8 \sqrt{\E C_1}\,  \log[2^{4J+1}]$ we get the following upper bound 
\begin{align*}
\E [(B_J-y_J)_+^2] & \lesssim \frac{1}{n^2 \alpha^4} \sum_{J \in \mathcal{J}} J^{4a} 2^{3J} \cdot \log[2^{4J+1}]\bigg\{ \frac{1}{2^{4J}}+\frac{1}{n^3 2^J}\bigg\}\\
& \lesssim \frac{1}{n^2 \alpha^4}  J_{max}^{4a+1} \bigg(1 + \frac{2^{2 J_{max}}}{n^3}\bigg).
\end{align*} 
As a consequence, remembering also that $2^{3 J_{\max}} \lesssim n^2\alpha^4$,
$$
\sum_{J\in \mathcal{J}} \E [(B_J-y_J)_+^2]  \lesssim_{\log} \frac{1}{n^2 \alpha^4}.
 $$
 We conclude to the same inequality if we replace $y_J$ with its bound from above $pen_B(J):= 256 \sigma^2 (J+1)^{2a+1} 2^{3J/2}/(n \alpha^2)$ and for the negative part that
$$
\E [(B_J-pen_B(J))_-^2] \leq  \E B_J^2+  pen_B^2(J) \leq 2 pen_B^2(J).
$$

In order to conclude the proof of the theorem we see that
\begin{align*}
\E[(\sup_J V_J)^2] &\lesssim_{log} \frac 1{n\alpha^2} + \inf_{J \in \mathcal{J}}\left(\|f-f_J\|_2^4 +pen^2(J)+pen_A^2(J)+ pen_B^2(J)\right)\\
&\lesssim_{log} \frac 1{n\alpha^2} + \inf_{J \in \mathcal{J}}\left(\|f-f_J\|_2^4 +\frac{2^J}{n \alpha^2} + \frac{2^{3J}}{n^2\alpha^4}\right)
\end{align*}
and the infimum is attained at the minimax rate, up to some logarithmic factors. \hfill \qed


\subsection{Proof of Theorem~6.2 (sequentially interactive protocol)}

Similarly to the non-interactive case, we write
\begin{align*}
    \E \left[(\hat D_n^{(SI)} - D - \widehat{pen}(\hat J)^2\right] 
    &= \E\Big[\sup_{J \leq J_{\max}} (\hat D_J - D - \widehat{pen}(J) )^2\Big] \\
    & \leq \sum_{J \leq J_{\max}} \E[(V_J)_+^2] + \inf_{J \leq J_{\max}} \E [(V_J)_-^2],
\end{align*}
where $V_J:= \hat D_J - D - \widehat{pen}(J)$.

We start with bounding $\sum_{J \leq J_{\max}}\E[(V_J)_+^2]$. To this aim, note first that
\begin{align*}
    \frac{1}{2}\E[(V_J)_+^2] & \leq \E [(\widehat{pen}(J) - pen(J))^2] + \E [(\hat D_J - D - pen(J))_+^2] := T_1+T_2,
\end{align*}
say, with $pen^2(J) = \frac {1}{n\alpha^2} \sum_{j=-1}^{J-1} \sigma_j^2 \|\beta_{j \cdot}\|_2^2$. First
\begin{align*}
    T_1 :&= \E \left[(\widehat{pen}(J) - pen(J))^2\right]\\
    &\leq 
    \E \left|\widehat{pen}^2(J) - pen^2(J)\right| \\
    &\leq \frac 1{n \alpha^2} \sum_{j=-1}^{J-1} \sigma_j^2\E \left|\left(\frac{1}{\arrowvert \mathcal{N}_j\arrowvert}\sum_{i\in\mathcal{N}_j} Z_i^{(2,j)}\right)_+ - \|\beta_{j \cdot }\|_2^2\right|  \\
    & \leq  \frac 1{n \alpha^2} \sum_{j=-1}^{J-1} \sigma_j^2 \left\{\E \left(\E \left|\frac{1}{\arrowvert \mathcal{N}_j\arrowvert}\sum_{i\in\mathcal{N}_j} Z_i^{(2,j)} - \int \left(\sum_k \hat \beta_{jk} \psi_{jk} \right)_\tau f \right| |Z^{(1)}\right) \right.\\
    &\ \ \   + \left. \E\left|\int \left(\sum_k \hat \beta_{jk} \psi_{jk} \right)_\tau f -  \|\beta_{j \cdot }\|_2^2\right| \right\}\\
    & \leq \frac 1{n \alpha^2} \sum_{j=-1}^{J-1} \sigma_j^2 \Bigg\{\E \sqrt{\Var \left(\frac{1}{\arrowvert \mathcal{N}_j\arrowvert}\sum_{i\in\mathcal{N}_j} Z_i^{(2,j)}  \Bigg|Z^{(1)}\right) }\\
    &\ \ \   +  \E\left|\int \Pi_\tau \left(\sum_k \hat \beta_{jk} \psi_{jk} \right) f - \sum_k \hat \beta_{jk} \beta_{jk}\right| + \left|\sum_k \hat \beta_{jk} \beta_{jk}-  \|\beta_{j \cdot }\|_2^2\right| \Bigg\}\\
    &= T_{1,1}+T_{1,2}+T_{1,3}, \text{ say}.
\end{align*}
We need to show that the sum $\sum_{J\leq J_{\max}} T_1(J)$ stays bounded by a sequence not larger than the minimax rate, up to some logarithmic factors. 

The first term in the sum above is bounded from above as follows
$$
T_{1,1}\leq \frac 1{n\alpha^2} \sum_{j=-1}^{J-1} j^{2a} 2^{j} \frac{\tau}{\sqrt{\lfloor n/(2 J_{\max})\rfloor \alpha^2 }} \lesssim \frac{J^{2a} 2^J\sqrt{J_{\max}}}{ (n \alpha^2)^{3/2}}.
$$
Summing this up over $J=1,\dots, J_{\max}$, we see that this gives a rate of order $(n\alpha^2)^{-1}$ up to log-factors.
By an argument analogous to \eqref{eq:var2}, the third term in the sum can be bounded from above using that $\|\beta_{j \cdot}\|^2_2 \lesssim 2^{-2j s'} I(s'<1) + 2^{-J \frac 53} I(s' \geq 1)$, (see Lemma~\ref{bias}), by
\begin{align*}
T_{1,3}  &\leq \frac 1{n \alpha^2} \sum_{j=-1}^{J-1} \sigma_j^2 \Var ^{1/2}  \left(\sum_k \hat \beta_{jk} \beta_{jk}\right) 
  \lesssim \frac 1{n \alpha^2} \sum_{j=-1}^{J-1} \sigma_j^2 \frac{\sigma_j}{ \sqrt{n \alpha^2}} \|\beta_{j \cdot }\|_2\\
  &\lesssim \frac{ 1 \vee( J^{3a} 2^{3J/2 - Js'})}{(n \alpha^2)^{3/2}} \mathds{1}_{0<s'<1}
  +\frac{ 1 \vee( J^{3a} 2^{2J/3})}{(n \alpha^2)^{3/2}} \mathds{1}_{s'\geq 1}.
\end{align*}
Summing this up over $J=1,\dots, J_{\max}$, we obtain the bound 
$$
\frac 1{n \alpha^2} \cdot \left( \frac{J_{\max}^{3a}2^{J_{\max}(3/2-s')}}{(n\alpha^2)^{1/2}}
+ \frac{J_{\max}^{3a}2^{2 J_{\max} /3}}{(n\alpha^2)^{1/2}}
\right) .
$$
Plugging in $2^{J_{\max}} \asymp \sqrt{n\alpha^2}/\log^{B/2}(n\alpha^2)$, we see that in the regime where $s'\in(0,\frac12)$ this is bounded from above by $(n\alpha^2)^{-\frac{4s'}{2s'+1}}$, up to log-factors, while, for $s'>\frac12$, we end up with the parametric rate, again, up to log-factors.
For the second term $T_{1,2}$ in the sum, first recall that $\Pi_\tau(v) - v = -(v-\tau)_+ + (-v-\tau)_+$ and thus
\begin{align*}
    T_{1,2}& ={\frac{1}{n\alpha^2}\sum_{j=-1}^{J-1}\sigma_j^2}\,\E \left| \int_0^1 \Pi_\tau\left(\sum_k \hat \beta_{jk} \psi_{jk} \right) f - \sum_k \hat \beta_{jk} \beta_{jk}
    \right|\\
    &\lesssim {\frac{1}{n\alpha^2}\sum_{j=-1}^{J-1}\sigma_j^2}\,\E \int_0^1 \left(\sum_k \hat \beta_{jk} \psi_{jk}-\tau \right)_+ f \\
    &\ \ \ \ \ \ 
     + {\frac{1}{n\alpha^2}\sum_{j=-1}^{J-1}\sigma_j^2}\,\E \int_0^1 \left(-\sum_k \hat \beta_{jk} \psi_{jk}-\tau \right)_+ f.
\end{align*}
{In order to bound the two expected values within the sums  above, we may replicate the arguments of the proof of Theorem~4.1 with $\hat f^{(1)}_J (x)$ replaced by 
$$ Y_j(x):= \hat f^{(1)}_{j+1} (x) - \hat f^{(1)}_j (x)=\sum_k \hat \beta_{jk} \psi_{jk}(x)
$$ and $\tau = \log^\kappa (n \alpha^2)$ in \eqref{eq:Yx} and further on. Note that  the truncation value $\tau$ is free of $s'$. Indeed, analogously to Proposition~\ref{exponineq},  there exist constants $c_1,\, c_2>0$ such that
$$
|Y_j(x) - \E Y_j(x)| \leq \left[ c_1 \frac{(j+1)^a 2^j}{\sqrt{n\alpha^2}} \sqrt{u} \right] \vee \left[c_2 \frac{(j+1)^a 2^j}{n\alpha} u \right]
$$
with probability larger than $1-4\exp(-u/2)$ for all $u>0$, such that the arguments for bounding  \eqref{eq:Yx} by means of Lemma~\ref{lemma:Integrals} with $a_1=0$, $A_1=\frac{c_1}{2}\frac{n\alpha^2}{(j+1)^{2a}2^{2j}}$, $r_1=2$, $v_1=\frac{\tau}{2}$ and $a_2=0$, $A_2=\frac{c_2}{2}\frac{n\alpha}{(j+1)^a2^j}$, $r_2=1$ and $v_2=(j+1)^a2^j/\alpha$ are applicable. 
We get
\begin{align*}
&    \E \int_0^1 \left(\pm\sum_k \hat \beta_{jk} \psi_{jk}-\tau \right)_+ f 
\leq \E \int_0^1 \left(\pm(Y_j - \E Y_j)-\frac{\tau}2 \right)_+ f\\
& \lesssim \frac{4}{c_1} \frac{2 (j+1)^{2a} 2^{2j}}{\tau n \alpha^2} \exp \left(
-\frac{c_1}{8} \frac{\tau^2 n\alpha^2}{(j+1)^{2a} 2^{2j}}
\right)
+ \frac{4}{c_2} \frac{(j+1)^a 2^j}{n \alpha^2} \exp\left(- \frac{c_2}{2} n\alpha^2 \right).
\end{align*}
This gives 
\begin{align*}
    T_{1,2} & \lesssim \frac{1}{(n \alpha^2)^2} \cdot \left\{ \frac{8}{c_1} \sum_j \frac{1}{\tau}(j+1)^{4a} 2^{3j} \exp (- \frac{c_1}2  \frac{n\alpha^2 (\tau/2)^2}{ J_{max}^{2a} 2^{2 J_{max}}})
    \right.\\
    & \left. + \frac{4}{c_2} \sum_j (j+1)^{3a} 2^{2j} \exp(-\frac{c_2}2 n\alpha^2)
     \right\}.
\end{align*}
By summing up over $J$ we get the upper bound (up to constants), we get
$$
\frac{J_{max}^{4a} 2^{3J_{max}}}{(n \alpha^2)^2 \log^\kappa (n \alpha^2)} \exp(- c \log^{2\kappa +2B - 2a}(n \alpha^2)) \lesssim \frac 1{n \alpha^2},
$$
for some constant $c>0$ and the last inequality holds for $\kappa(a,B)>0$ chosen large enough. 
}

Next, denote by $D_J = \sum_{j=-1}^{J-1} \|\beta_{j \cdot}\|_2^2$ and let us decompose 
\begin{align*}
T_2&:=\E [(\hat D_J - D - pen(J))_+^2] \\
    &\leq
    \E \left[\left(\hat D_J - \sum_{j=-1}^{J-1} \int_0^1 \Pi_\tau  \left(\sum_k \hat \beta_{jk} \psi_{jk} \right) f \right)_+^2 \right] \\
    & + \E\left[ \left( \sum_{j=-1}^{J-1} \int_0^1 \Pi_\tau  \left(\sum_k \hat \beta_{jk} \psi_{jk} \right) f  - \sum_{j=-1}^{J-1} \sum_k \hat \beta_{jk} \beta_{jk}\right)_+^2 \right] \\
    & + \E \left[\left(  \sum_{j=-1}^{J-1} \sum_k \hat \beta_{jk} \beta_{jk} - D_J - (D- D_J )- pen(J)\right)_+^2\right]\\
    & =:T_{2,1}+ T_{2,2} + T_{2,3}, \text{ say}.
\end{align*}
For the first term we write
\begin{align*}
    T_{2,1}& = \E\E \left[\left(\sum_j (\bar{Z}_{\cdot}^{(2,j)} - \E(Z_1^{(2,j)}|Z^{(1)})) \right)^2 | Z^{(1)}\right] \\
    & \lesssim J \cdot \E \left( \frac{\tau^2}{|\mathcal{N}_j|\alpha^2}\right) \lesssim\log^{2 \kappa}(n\alpha^2) \frac{ J}{(n/ J_{max})\alpha^2} \leq \log^{2 \kappa}(n\alpha^2) \frac{ J_{max}^2}{n \alpha^2}.
\end{align*}
Next, we write $\sum_k \hat \beta_{jk} \beta_{jk} = \int_0^1 Y_j f$ and get that
\begin{align*}
    T_{2,2} &\leq J \cdot \sum_j \E \left[\left(\int_0^1 [\Pi_\tau(Y_j) -Y_j ] f   \right)_+^2\right] \\
    &\leq J \cdot \sum_j \E \left[\left(\int_0^1 -[Y_j - \tau ]_+ f + [-Y_j-f]_+ f   \right)^2\right] \\
    & \leq J \cdot \sum_j 2 \cdot \max \E \left[ \left(\int_0^1 (\pm (Y_j - \E Y_j) - \frac {\tau}2)_+^2 f\right)^2 \right]
\end{align*}
and we follow the lines of proof for the non-adaptive case with $\hat f_J^{(1)}$ replaced by $Y_j$ in Section~B.2 and also analogously to the bound for the term $T_{1,2}$ here above to get, for $\kappa(a,B)>0$ large enough,
$$
T_{2,2} \lesssim_{log} \frac{1 }{n \alpha^2}.
$$
Finally,
\begin{align}
    T_{2,3}& \lesssim \int_0^{\infty} u \cdot \Pr \left(\sum_{j=1}^{J-1} \sum_k (\hat \beta_{jk}- \beta_{jk}) \beta_{jk} \geq u + D-D_J + pen_J\right) du \nonumber\\
    & \lesssim \int_{D-D_J + pen_J}^\infty 
    u \cdot \Pr \left(\sum_{j=1}^{J-1} \sum_k (\hat \beta_{jk}- \beta_{jk}) \beta_{jk} \geq u \right) du.\label{T_23}
\end{align}

Here we use directly the concentration from Lemma~\ref{lemma: bound}. We decompose
\begin{align*}
    \sum_{j=1}^{J-1} \sum_k (\hat \beta_{jk}- \beta_{jk}) \beta_{jk} &= \Pr_{n/2} \sum_{j=1}^{J-1} \sum_k (\psi_{jk}(\cdot) - \beta_{jk}) \beta_{jk} \\
&+ \frac 2n \sum_{i=1}^{n/2}\sum_{j=1}^{J-1} \sum_k \sigma_j \frac \sigma \alpha W_{ijk}  \beta_{jk} 
\end{align*}
Analogously to the non-interactive setup, we use the Bernstein inequality in \eqref{eq:Bernstein} with random $\Xi_{ijk}$ replaced by deterministic $\beta_{jk}$. 
The empirical process in the first term of the previous display possesses uniformly in $J$ an exponential tail bound which is smaller than the exponential tail bound for the second part and consequently covered by doubling the constant of the penalty. Indeed, the first term has variance of a smaller order uniformly in $J$ than the second term, and also an upper bound which is uniformly in $J$ smaller than $\sigma/(n\alpha) \max_{jk} \sigma_j|\beta_{jk}|$. We write
$$
\Pr \left(\sum_{j=1}^{J-1} \sum_k (\hat \beta_{jk}- \beta_{jk}) \beta_{jk} \geq u \right) \leq
\exp\left( -\frac{u^2}{2} \frac 1{\frac {\sigma^2}{n \alpha^2} \sum_j \sigma_j^2\|\beta_{j \cdot }\|_2^2  + u \frac \sigma{n \alpha } \max_{jk} \sigma_j |\beta_{jk}| }\right).
$$
Remark that $\max_{jk} \sigma_j |\beta_{jk}| \leq J^a $. We apply Lemma~\ref{lemma: bound} in order to bound the integral in (\ref{T_23}) with $a_1 = pen(J)^2$, $a_2 = J^a/(n\alpha)$ and $y=D-D_J+pen(J)$ to get the upper bound
\begin{align*}
T_{2,3} &\leq 2a_1 \exp\Big(-\frac{y^2}{4a_1}\Big) + 4a_2 \Big[ y \vee \frac{a_1}{a_2}\Big] \exp\Big( -\frac{\big[y \vee \frac{a_1}{a_2}\big]}{4a_2} \Big)\\
&+ 16 a_2^2 \exp\Big(  -\frac{\big[y \vee \frac{a_1}{a_2}\big]}{4a_2}\Big)
\\
& \lesssim pen(J)^2 + a_2^2 \lesssim pen(J)^2.
\end{align*}
By an argument analogous to \eqref{eq:var2}, $pen(J)^2 = (n\alpha^2)^{-1} \sum_{j=-1}^{J-1} \sigma_j^2 \|\beta_{j \cdot}\|_2^2$ can be bounded from above  using that $\|\beta_{j \cdot}\|^2_2 \lesssim 2^{-2j s'} I(s'<1) + 2^{-J \frac 53} I(s' \geq 1)$, (see Lemma~\ref{bias}). After summing up $pen(J)^2$ over $J$ we get the bound 
$$
J_{max} \frac{J_{max}^{2a} \cdot (1 \vee 2^{J_{max}(1-2s')})}{n \alpha^2} \lesssim \frac{J_{max}^{2a+1}}{n \alpha^2} + J_{max}^{2a+1} (n \alpha^2)^{-\frac 12 - s'} \cdot I(s'< \frac 12).
$$
We see that, in case $s'<1/2$, this bound is smaller than $(n \alpha^2)^{-2s'/(4s'+1)}$ and that the bound is up to logarithmic terms smaller than $\mathfrak r_n^{(SI)}(\alpha, a, s')$.

\bigskip

We finish the proof by studying 
\begin{align*}
\inf_{J \leq J_{\max}} \E [(V_J)_-^2] 
& \leq 4 \inf_{J \leq J_{\max}} \big\{ \E [(\hat D_J - D_J)^2]+ (D-D_J)^2 + pen(J)^2 \\
& + \E [(\widehat{pen}(J) - pen(J))^2]  \big\}\\
& \lesssim_{log} \inf_{J \leq J_{\max}} (D-D_J)^2 + pen(J)^2 + T_1,
\end{align*}
where $T_1$ has been defined and bounded from above here above. We conclude that the risk of the adaptive estimator $\hat D^{(SI)}$ is up to some logarithmic factor bounded from above by $\mathfrak r_n^{(SI)}(\alpha,a,s')$. \hfill\qed


\subsection{Auxilliary lemmas}

\begin{lemma}\label{lemma: bound} For any constants $y,a_1,a_2>0$, 
\begin{align*}
\int_{y} ^\infty & t \exp \Big( -\frac{t^2}{2} \frac{1}{a_1+ta_2}\Big) d t\\
& \leq 2a_1 \exp\Big(-\frac{y^2}{4a_1}\Big) + 4a_2 \Big[ y\vee \frac{a_1}{a_2}\Big] \exp\Big( -\frac{\big[y\vee \frac{a_1}{a_2}\big]}{4a_2} \Big)+\\
&\quad\quad
+ 16 a_2^2 \exp\Big(  -\frac{\big[y\vee \frac{a_1}{a_2}\big]}{4a_2}\Big).
\end{align*}
\end{lemma}
\begin{proof}
Because of
$$
\frac{t^2}{2} \, \frac{1}{a_1+a_2t}\geq 
\begin{cases}
\frac{t^2}{4a_1} & \text{ if } t\leq a_1/a_2\\
\frac{t}{4a_2} & \text{ if } t> a_1/a_2,
\end{cases}
$$
the left-hand side is upper bounded by
\begin{align*}
\int_y^\infty t\exp \Big( -\frac{t^2}{2} \, \frac{1}{a_1+a_2t}\Big) d t
&\le \underbrace{\int_y^{(a_1/a_2) \vee y} t \exp\Big( -\frac{t^2}{4a_1}\Big) d t}_{=:\text{A1}} +\\
&\quad\quad
+ \underbrace{\int_{y\vee (a_1/a_2)} ^\infty t \exp\Big(-\frac{t}{4a_2}\Big)  d t}_{=:\text{A2}}.
\end{align*} 
Both integrals can be evaluated explicitly:
\begin{align*}
\text{A1} &=-2a_1\int_y^{(a_1/a_2) \vee y} -\frac{t}{2a_1} \exp\Big( -\frac{t^2}{4a_1}\Big) d t\\
&= -2a_1 \exp\Big(-\frac{t^2}{4a_1}\Big) \,\Big|_y^{(a_1/a_2)\vee y} \\
&= 2a_1 \exp \Big( -\frac{y^2}{4a_1} \Big)
- 2a_1 \exp\Big( -\frac{\big[ \frac{a_1}{a_2} \vee y\big]^2}{4a_1}\Big) 
\end{align*}
and
\begin{align*}
\text{A2}&= -t 4a_2 \exp\Big(-\frac{t}{4a_2}\Big) \,\Big|_{y\vee (a_1/a_2)} ^\infty
+ \int_{y\vee (a_1/a_2)} 4a_2 \exp\Big( -\frac{t}{4a_2}\Big) d t \\
&= 4a_2 \Big[ y\vee\frac{a_1}{a_2}\Big] \exp \Big(-\frac{\big[y\vee \frac{a_1}{a_2}\big]}{4a_2} \Big)
+ 16 a_2^2 \exp \Big(-\frac{\big[y\vee \frac{a_1}{a_2}\big]}{a_2} \Big).
\end{align*}
Dropping the negative summand in the expression for A1 reveals the bound.
\end{proof}

\begin{lemma}[Tail bound of $C_1$]
\label{lemma:C1tail}
For any {$\gamma>n^{-1/3}$}, $n >1$, 
\begin{align*}
\Pr\big(&C_1>\gamma\E C_1\big)
\lesssim 2^{J+2}\exp\Big(-n\min\Big\{\frac{2^J\gamma}{n},\frac{2^{J/2}\sqrt{\gamma}}{\sqrt{n}}\Big\}\Big) + \frac{1}{\gamma^2 n^3 2^J}.
\end{align*}
\end{lemma}

\begin{proof} We decompose $\zeta_{ijk} = \frac{n}{n-1} \frac 1n \sum_{h=1}^n W'_{hjk} - \frac{1}{n-1} W'_{ijk}$ and use that $(a+b)^2 \leq 2 a^2 + 2 b^2$ together with $n/(n-1) \leq 2$. 
Due to the inequality
\begin{align*}
\Pr(C_1>\eta)&\leq \Pr\bigg(\frac{16}{n}\sum_{j,k}\sigma_j^4\frac{\sigma^4}{\alpha^4}\Big[\frac{1}{n}\sum_{i=1}^nW_{ijk}\Big]^2>\eta/2\bigg)\\
& + \Pr\bigg(\frac{4}{n(n-1)^2}\frac{1}{n}\sum_{i=1}^n\sum_{j,k}\sigma_j^4 \frac{\sigma^4}{\alpha^4} W_{ijk}^2>\eta/2\bigg),
\end{align*}
it remains to bound both expressions on the right-hand side. By the union bound and the Bernstein exponential inequality for Laplace i.i.d. random variables derived, e.g., in \cite{BouLugMas13}, we get
\begin{align*}
\Pr\bigg(\frac{1}{n}&\sum_{j,k}\sigma_j^4 \frac{\sigma^4}{\alpha^4}\Big[\frac{1}{n}\sum_{i=1}^nW_{ijk}\Big]^2>\frac{\eta}{32}\bigg)\\
&\leq  2^{J+1}\Pr\Big(\Big\arrowvert \frac{1}{n} \sum_{i=1}^n W_{i 1 1}\Big\arrowvert>\underbrace{\frac{\sqrt{n} \alpha^2}{8\sigma^2} \frac{\sqrt{\eta}}{2^{J}J^{2a}}}_{=:\eta_{n,J}}\Big)\\
&\leq 2^{J+2}\exp\bigg(-n\frac{\eta_{n,J}^2}{2}\frac{1}{2+\eta_{n,J}}\bigg)\\
&\leq  2^{J+2}\exp\Big(-n\min\Big\{\frac{\eta_{n,J}^2}{8},\frac{\eta_{n,J}}{4}\Big\}\Big).
\end{align*}
Concerning the second expression, Chebychev's inequality reveals
\begin{align*}
\Pr\bigg(&\frac{4}{n(n-1)^2}\frac{1}{n}\sum_{i=1}^n\sum_{j,k}\sigma_j^4 \frac{\sigma^4}{\alpha^4} W_{ijk}^2>\eta/2\bigg)\\
&\leq \Pr\bigg(\frac{4}{n(n-1)^2}\frac{1}{n}\sum_{i=1}^n\sum_{j,k}\sigma_j^4 \frac{\sigma^4}{\alpha^4} (W_{ijk}^2-2)>\eta/4\bigg) + \\
&\quad\quad +\mathds{1}\Big\{\frac{8 \sigma^4}{n(n-1)^2\alpha^4}\sum_{j,k} \sigma_j^{4}>\eta/4 \Big\}\\
&\leq \frac{16^2 \sigma^8}{\eta^2n^3 (n-1)^4\alpha^8}2^{5J}J^{8a}\E(W_{ijk}^2 - 2)^2+ \mathds{1}\Big\{\frac{8 \sigma^4}{n(n-1)^2 \alpha^4}2^{3J+1}J^{4a}>\eta/4\Big\}.
\end{align*}
With $\eta=\gamma\E C_1 = \gamma \frac{4 \sigma^4}{n (n-1) \alpha^4 } \sum_{j,k} \sigma_j^2$, the indicator function above is equal to $0$ for any $\gamma > c n^{-1} J_{max}^{2a}2^{J_{max}}$   for some constant $c>0$.  Moreover, we obtain $ \frac{2^{J/2}}{\sqrt{n}} \sqrt{\gamma} \lesssim \eta_{n,J}$.
Therefore, 
\begin{align*}
\Pr(C_1>\gamma \E C_1)
&\lesssim 2^{J+2}\exp\Big(-n\min\Big\{\frac{2^J\gamma}{ n},\frac{2^{J/2}\sqrt{\gamma}}{\sqrt{n}}\Big\}\Big) + \frac{2^{5J}J^{8a}}{\gamma^2 (\E C_1)^2 n^7 \alpha^8}\\
&\lesssim 2^{J+2}\exp\Big(-n\min\Big\{\frac{2^J\gamma}{ n},\frac{2^{J/2}\sqrt{\gamma}}{\sqrt{n}}\Big\}\Big) + \frac{1}{\gamma^2  n^3 2^J}.
\end{align*}
\end{proof}

\end{document}